\numberwithin{equation}{section}
\newtheorem{theorem}{Theorem}[section]
\newtheorem{lemma}[theorem]{Lemma}
\newtheorem{prop}[theorem]{Proposition}
\newtheorem{cor}[theorem]{Corollary}
\newtheorem{rem}[theorem]{Remark}
\newtheorem{definition}[theorem]{Definition}
\newcommand{\N}{{\ensuremath{\mathbb N}} }
\newcommand{\bbP}{{\ensuremath{\mathbb P}} }
\newcommand{\R}{{\ensuremath{\mathbb R}} }
\newfont{\indic}{bbmss12}
\def\un#1{\hbox{{\indic 1}$_{#1}$}}
\newcommand\eps{\epsilon}
\renewcommand{\a}{\alpha}
\renewcommand{\b}{\beta}
\renewcommand{\k}{\kappa}
\newcommand{\dt}{\frac{\mathrm{d}}{\mathrm{d}t}}
\renewcommand{\d}[2]{\dfrac{\partial #1}{\partial #2}}
\newcommand{\dplus}[2]{\dfrac{\partial^+ #1}{\partial #2}}
\newcommand{\ddt}[1]{\dfrac{\mathrm{d} #1}{\mathrm{d}t}}
\newcommand{\dplusdt}[1]{\dfrac{\mathrm{d}^+ #1}{\mathrm{d}t}}
\newcommand{\la}{\langle}
\newcommand{\ra}{\rangle}
\newcommand{\ds}{\: \mathrm{d}s}
\newcommand{\dl}{\: \mathrm{d}\ell}
\newcommand{\bsl}{\backslash}
\newcommand{\Mf}{\mathcal{M}_f^+}
\newcommand{\Mcc}{\mathcal{M}_c^+}
\newcommand{\pinf}{+ \infty}
\newcommand{\Dphi}{\Delta \phi(m,m')}
\newcommand{\dm}{\mathrm{d}m}
\newcommand{\Cc}{C_c(0,\pinf)}
\newcommand{\G}{\Gamma}
\newcommand{\cinf}{c_{\infty}}
\newcommand{\binf}{\beta_{\infty}}
\newcommand{\Tgel}{T_{\rm gel}}
\begin{document}

\title[Post-gelation uniqueness for coagulation equations]{Uniqueness
of post-gelation solutions of a class of coagulation equations}

\author{Raoul Normand}
\address{Laboratoire de Probabilit\'es et Mod\`eles Al\'eatoires, Universit\'e Paris 6 -- Pierre et Marie Curie, 175 rue du Chevaleret, 75013 Paris, France}
\email{raoul.normand@upmc.fr}
\author{Lorenzo Zambotti}
\address{Laboratoire de Probabilit\'es et Mod\`eles Al\'eatoires, Universit\'e Paris 6 -- Pierre et Marie Curie, 175 rue du Chevaleret, 75013 Paris, France}
\email{lorenzo.zambotti@upmc.fr}

\subjclass[2000]{Primary: 34A34; Secondary: 82D60}

\keywords{Coagulation equations; Gelation; Generating functions; Method of characteristics; Long-time behavior}

\begin{abstract}
We prove well-posedness of global solutions
for a class of coagulation equations which exhibit
the gelation phase transition. To this end, we solve an associated partial differential
equation involving the generating functions before and after the phase transition. Applications
include the classical Smoluchowski and Flory equations with
multiplicative coagulation rate and the recently introduced symmetric model
with limited aggregations. For the latter, we compute the limiting concentrations and we relate them to random graph models.
\end{abstract}

\maketitle

\section{Introduction}

\subsection{Coagulation models}
In this paper we deal with the problem of uniqueness of post-gelation solutions of
several models of coagulation, namely Smoluchowski's and Flory's classical models, and the corresponding
models with limited aggregations recently introduced by Bertoin \cite{BertoinTSS}. 

Smoluchowski's coagulation equations
describe the evolution of the concentrations of particles in a system where particles
can perform pairwise coalescence, see e.g. \cite{AldousReview, LauMi,NorrisReview}. In the original model of Smoluchowski \cite{Smolu}, a pair of particles
of mass, respectively, $m$ and $m'$, coalesce at rate
$\k(m,m')$ and produce a particle of mass $m+m'$.
In the discrete setting, the evolution of the concentration $c_t(m)$ of particles of
mass $m\in\N^*$ at time $t \geq 0$ is given by the following system
\begin{equation}\label{coagu}
\dt c_t(m) = \frac12 \sum_{m'=1}^{m-1} \k(m,m')\,  c_t(m') \, c_t(m-m')  - \sum_{m'\geq 1} \k(m,m') c_t(m)c_t(m').
\end{equation}

Norris considered in \cite{NorrisCC} far more general models of {\it cluster coagulation}, 
where the rate of coalescence does not depend only on the mass of the particles but
also on other parameters. In this general setting, most results on existence and uniqueness are obtained before a critical time, known as the {\it gelation time}, while the global behavior of the solutions after gelation, and in particular uniqueness, is not known. 

An example of a solvable cluster coagulation model is Bertoin's {\it model with limited aggregations} \cite{BertoinTSS}, which we shall simply call the {\it model with arms}. In this
case, particles have a mass but also carry a certain number of potential links, called {\it arms}. Two particles of mass $m$ and $m'$ may coagulate only if they have a positive number of arms, say $a$ and $a'$. When they coagulate, an arm of each is used to create the bond and both arms are then deactivated, hence creating a particle with $a+a'-2$ arms and mass $m+m'$. The coagulation rate of these two particles is $aa'$.
Therefore, if $c_t(a,m)$ is the concentration of particles with $a\in\N=\{0,1,\dots\}$ arms and mass $m\in\N^*=\{1,2,\dots\}$, then
the coagulation equation reads
\begin{equation} \label{smolubras0}
\begin{split}
\dt c_t(a,m) & = \frac12 \sum_{a'=1}^{a+1} \sum_{m'=1}^{m-1} a'(a+2-a') c_t(a',m')c_t(a+2-a',m-m') \\
 & \quad - \sum_{a' \geq 1} \sum_{m'\geq 1} aa'c_t(a,m) c_t(a',m').
\end{split}
\end{equation}
For monodisperse initial concentrations, i.e. $c_0(a,m)=\un{\{m=1\}}\mu(a)$,
with $\mu=(\mu(a))_{a\in\N}$ a measure on $\N$ with unit mean,
it is proved in \cite{BertoinTSS} that this equation has a unique solution on some interval $[0,T)$, where $ T = \pinf$ if and only if $K \leq 1$, where
\begin{equation} \label{K}
K:=\sum_{a\geq 1} a(a-1)\mu(a).
\end{equation}
In other words, if particles at time 0 have, on average, few arms, equation \eqref{smolubras0} has a unique solution defined for all $t \geq 0$. 
When this is the case, as time passes, all available arms are used to create bonds and only
particles with no arms remain in the system. The limit concentrations $\cinf(0,m)$
as $t\to+\infty$ of such particles turn out to be related
to the distribution of the total population generated by a sub-critical Galton-Watson
branching process (see e.g. \cite{AtNey}) started from two ancestors: see \cite{BertoinTSS,BertoinSido}
and section \ref{limconc} below.

\subsection{The gelation phase transition}
A formal computation shows that solutions
of \eqref{coagu} with multiplicative kernel $\k(m,m')=mm'$ should have constant mass
\begin{equation} \label{Mt0}
M_t := \sum_{m \geq 1} m c_t(m)
, \qquad t\geq 0,
\end{equation}
i.e. $\frac d{dt}M_t=0$. It is however well-known that if large particles can coagulate sufficiently fast, then
one may observe in finite time a phenomenon called {\it gelation}, namely the formation of particles of infinite mass, the {\it gel}. These particles do not count in the computation of the mass
so from the gelation time on, $M_t$ starts to decrease.

The reason why \eqref{smolubras0} can be solved, is that it can be transformed
into a solvable PDE involving the generating function of $(c_t)_{t\geq 0}$. 
In Equation \eqref{coagu}, this transformation is also possible for several
particular choices of the kernel $\k(m,m')$, namely
when $\k$ is constant, additive or multiplicative: see e.g. \cite{DeaconuTanre}. In the multiplicative case $\k(m,m')=mm'$, which is our main concern here, 
the total mass 
is a parameter of \eqref{coagu} and of the associated PDE, which is therefore easy to solve only when $(M_t)_{t\geq 0}$ is known.  Existence and uniqueness of solutions of \eqref{coagu} are thus
easy up to gelation, since in this regime, the total mass $M_t$ is constant.

After gelation, the gel may or may not interact with the other particles. If it does, Equation \eqref{coagu} has to be modified into Flory's equation \eqref{flory}. Else, the gel is inert, in which case Smoluchowski's equation continues to hold. Obviously, they are identical before gelation.

Occurrence of gelation depends heavily on the
choice of the coagulation rate $\k(m,m')$, and in the multiplicative case, gelation always occurs \cite{EscobedoGaMC,FouLauWP,LauCCC}. After gelation, the mass is not known, so $M_t$ itself becomes an unknown of the equation, and well-posedness of the equation is then much less trivial. The multiplicative kernel is therefore particularly interesting, since it exhibits a non-trivial behavior but can still be studied in detail by means of explicit computations. 

\medskip
The same phenomenon of gelation has been observed in \cite{BertoinTSS} for \eqref{smolubras0} for monodisperse initial concentrations $c_0$.
A formal computation shows that the the mean number of arms $A_t$ 
\[
A_t:=\sum_{a,m \geq 1} a\, c_t(a,m), \qquad t\geq 0,
\]
satisfies the equation $\frac d{dt}A_t=-A_t^2$ and should therefore be equal to
$\frac{1}{1+t}$ for all $t\geq 0$. In fact, this explicit expression
holds only until a critical time, which is shown to be equal
to $1/(K-1)$ if $K>1$ and to $+\infty$ if $K\leq 1$, where $K$ is defined in \eqref{K}.
Again, the associated PDE is easy to solve before gelation since then, $A_t$ is known, while afterwards, the PDE contains
the unknown parameter $A_t$.

\subsection{Main result}
In this paper we investigate the global behavior of Smoluchow\-ski's equation with arms \eqref{smolubras0} before, at and
after the gelation phase transition, proving existence and uniqueness of global solutions
for a large class of initial conditions. 
The technique used, as in \cite{BertoinTSS}, is to transform the equation into a PDE. Since the total number of arms $(A_t)_{t\geq 0}$ is not a priori known, this PDE is non local, unlike the one obtained in the regime before gelation. This is the main difficulty we have to deal with. We use a modification of the classical method of characteristics to show uniqueness of solutions to this PDE, and hence to \eqref{smolubras0}. We can consider initial conditions $(c_0(a,m), a \in \N, m \in \N^*)$ with an initial infinite number of arms, that is, such that
\[
A_0 := \sum_{a, m \geq 1} a c_0(a,m)
\]
is infinite, and show that there is a unique solution ``coming down from infinity
sufficiently fast'', i.e. such that, for positive $t$,
\[
\int_0^t A_s^2 \ds < \pinf.
\]
Note however that this is no technical condition, but a mere assumption to ensure that the equation is well-defined.

We also consider a modification of this model which corresponds to Flory's equation for the model with arms. In this setting, the infinite mass particles, that is, the gel, interact with the other particles. We also prove existence, uniqueness and study the behavior of the solutions for this model.

In both cases, our technique provides a representation formula allowing to compute various quantities, as the mean number of arms in the system and the limiting concentrations. In Flory's case, we extend to all possible initial concentrations the computations done in \cite{BertoinTSS} in absence of gelation. In the first model, a slight modification appears which calls for a probabilistic interpretation; see
section \ref{limconc} below.

This seems to be the first case of a cluster coagulation model for which global well-posedness
in presence of gelation can be proven. Another setting to which these techniques could be applied
is the {\it coagulation model with mating} introduced in \cite{NorMFC}.

\subsection{Limiting concentrations}\label{limconc}
In \cite{BertoinTSS}, explicit solutions to \eqref{smolubras0} are given for monodisperse initial conditions $c_0(a,m) = \mu(a) \un{\{m=1\}}$ for some measure $\mu$ on $\N$ with unit first moment. In particular, when there is no gelation, i.e. $K\leq 1$ where $K$ is as in \eqref{K}, and $\mu \neq \frac12\delta_2$, there are limiting concentrations
\[
\cinf(a,m) = \frac{1}{m(m-1)} \, \nu^{*m}(m-2)\, \un{\{a=0\}}, \qquad m \geq 2,
\]
where $\nu(m) = (m+1) \mu(m+1)$ is a probability measure on $\N$ different from $\delta_1$. This formula clearly resembles the well-known formula of Dwass \cite{Dwass}, which provides the law of the total progeny $T$ of a Galton-Watson process with reproduction law $\nu$, started from two ancestors:
\[
\bbP(T=m)=\frac2m \, \nu^{*m}(m-2), \quad m \geq 2.
\]
The similarity between the two formulas is no coincidence and is explained in \cite{BertoinSido} by means of the configuration model. For basics on Galton-Watson processes, see e.g. \cite{AtNey}.

Let us briefly explain the result of \cite{BertoinSido}, referring e.g. to \cite{vdHofstad} for more results on general random graphs. The configuration model aims at producing a random graph whose vertices have a prescribed degree. To this end, consider a number $n$ of vertices, each being given independently a number of arms (that is, half-edges) distributed according to $\mu$. Then, two arms in the system are chosen uniformly and independently, and form an edge between the corresponding vertices. This procedure is repeated until there are no more available arms. Hence, one arrives to a final state which can be described as a collection of random graphs. Then Corollary 2 in \cite{BertoinSido} and the discussion below show that, when there is no gelation, the proportion of trees of size $m$ tends to $\cinf(0,m)$ when the number $n$ of vertices tends to infinity. Hence, the final states in the configuration model and in Smoluchowski's equation with arms coincide. This shows that the former is a good discrete model for coagulation.

Interestingly, the absence-of-gelation condition $K\leq 1$ is equivalent to (sub)-criticality
of the Galton-Watson branching process with reproduction law $\nu$, i.e. to almost sure extinction of the progeny, while $K>1$ and gelation at finite time are equivalent to super-criticality of the GW process.

In this paper, we obtain the limiting concentrations for \eqref{smolubras0} and its modified version when there is gelation. Let us start with the modified model, which is the counterpart of Flory's equation for the model with arms. In this case, and with the same notations as above, we obtain the limit concentrations
\[
\cinf(a,m) = \frac{1}{m(m-1)} \, \nu^{*m}(m-2)\, \un{\{a=0\}}, \qquad m \geq 2,
\]
that is, the same explicit form as the one obtained in absence of gelation. 
Again, this formula can be interpreted both in terms of a configuration model and
of a super-critical Galton-Watson branching process. The relation between 
Flory's equation with arms and the configuration model is natural, since
in both cases all particles interact with each other, no matter what their size is.
It is worth noticing that, even though the limit concentrations have the same form with or without gelation, still some mass is eventually lost in presence of gelation, see \eqref{Minfty} below.

We also obtain the limiting concentrations for Smoluchowski's equation with arms, namely
\[
\cinf(a,m) = \frac{1}{m(m-1)} \, \binf^{m-1} \nu^{*m}(m-2)\, \un{\{a=0\}},
\]
where $\binf$ is some constant, which is $1$ when there is no gelation, and is greater than $1$ otherwise, see Section \ref{seccinfsm}. However, the probabilistic interpretation of $\binf$ is unclear. One can recover Smoluchowski's equation with arms from discrete models by preventing big particles from coagulating, as is done in \cite{FouLauMLP} for the standard Smoluchowski equation, but the precise meaning of $\binf$ still seems to require some labor.

\subsection{Bibliographical comments}
Smoluchowski's equation \eqref{coagu} has been extensively studied; we refer to the reviews \cite{AldousReview,LauMi,NorrisReview}. Conditions on the kernel $\k$ are know for absence or presence of gelation, though this requires a precise definition of gelation, see e.g. \cite{EscobedoGiC}, or \cite{Jeon} in a probabilistic setting. For a general class of kernels Smoluchowski's solution has a unique solution before gelation \cite{NorrisReview,DuboStew,FouLauWP,LauMi}, and in the multiplicative case gelation always occurs \cite{EscobedoGaMC,FouLauWP,LauCCC}. 

For the monodisperse initial condition $c_0(m)=\un{\{m=1\}}$, the first proof of existence and uniqueness to \eqref{coagu} before gelation is given in \cite{McLeod}, and a proof of global existence and uniqueness can be found in \cite{Kokholm}. The case of general nonzero initial conditions has been considered by several papers in the Physics literature \cite{ErnstKoG, ErnstCP,LeyvTschu,StellZiff,ZiffKoP}, and by at least one mathematical paper \cite{ShirRoesSR}, which however treats in full details only the regime before gelation, see Remark \ref{see} below. The same authors also provide in \cite{ShirRoesPGM} an exact formula for the post-gelation mass of \eqref{coagu}, but with no rigorous proof. 

Thus, a clear statement about well-posedness of \eqref{coagu} for the most general initial conditions still seems to be missing, and our paper tries to fill this gap. We adapt the classical method of characteristics for generating functions, see \cite{DeaconuTanre,BertoinTSS}, which yields easily uniqueness before gelation for a multiplicative kernel \cite{MenonPego}. We can in particular consider initial concentrations with infinite total mass, i.e. such that
\[
M_0:=\int_{(0,+\infty)} m\, c_0(\dm)=+\infty,
\]
as long as $\int_{(0,+\infty)} (m\wedge 1)\, c_0(\dm)<+\infty$.
This covers for instance initial conditions of the type $c_0(\dm)=C_p\, m^{-p}\,
\dm$ with $p\in[1,2)$. 

Our main concern is uniqueness, since existence of solutions has been obtained in a much more general setting by analytic \cite{LauGS,LauCCC,NorrisCC} or probabilistic \cite{FouLauMLP,Jeon} means. However, the case of an infinite initial mass seems to have been considered only in \cite{LauGS} in the discrete case, so we refer to Section \ref{exist} below for a proof.

\subsection{Plan of the article}
We start off in Section 2 by considering existence, uniqueness and representation formulas for global solutions of \eqref{coagu}, introducing and
exploiting all main techniques which are needed afterwards to tackle the same issues in the
case of \eqref{smolubras0}. We prove that for the most general initial conditions $\mu_0(\dm)$, a positive measure on
$(0,+\infty)$, Smoluchowski's equation with a multiplicative kernel has a unique solution
before and after gelation. We also show existence and uniqueness for the modified version of Smoluchowski's model, namely Flory's equation, in Section 3. The techniques used are generalized in Section 4 and 5, where we prove analogous results for the models with arms. We compute the limiting concentrations in Section 6, which are not trivial, in comparison with the standard Smoluchowski and Flory cases, for which they are always zero. 

\section{Smoluchowski's equation}

In this section we develop our method in the case of equation \eqref{coagu},
proving existence, uniqueness and representation formulas for global solutions.
Let us first fix some notations.
\begin{itemize}
\item $\Mf$ is the set of all non-negative finite measures on $(0,\pinf)$.
\item $\Mcc$ is the set of all non-negative Radon measures on $(0,\pinf)$.
\item For $\mu \in \Mcc$ and $f \in L^1(\mu)$ or $f \geq 0$,
\[
\la \mu, f \ra = \int_{(0,\pinf)} f(m) \: \mu(\dm).
\]
We will write $m$ for the function $m \mapsto m$, $m^2$ for $m \mapsto m^2$, etc.
\item For $\phi : (0,\pinf) \to \R$ and $m,m' > 0$, $\Dphi = \phi(m+m')-\phi(m)-\phi(m')$.
\item $\Cc$ is the space of continuous functions on $(0,\pinf)$ with compact support.
\item For a function $(t,x) \mapsto \phi_t(x)$, $\phi_t'(x)$ is the partial derivative of $\phi$ with respect to $x$.
\item $\dplus{}{t}$ or $\dplusdt{}$ denotes the right partial derivative with respect to $t$.
\end{itemize}
We are interested in Smoluchowski's equation \eqref{coagu} with multiplicative
coagulation kernel $\kappa(m,m')=mm'$. Note that the second requirement in the following definition is only present for the equation to make sense.
\begin{definition}\label{defsolsmolu}
Let $\mu_0\in\Mcc$.
We say that a family $(\mu_t)_{t\geq 0}\subset \Mcc$ solves Smoluchowski's equation if
\begin{itemize}
\item for every $t>0$, $\int_0^t \la \mu_s(\dm), m\ra^2 \ds < \pinf$,
\item for all $\phi \in \Cc$ and $t>0$
\begin{equation}
\la \mu_t , \phi \ra = \la \mu_0, \phi \ra + \frac12 \int_0^t \la \mu_s(\dm) \mu_s(\dm'), mm'\, \Dphi \ra \ds,
\label{smolu2}
\end{equation}
\item if $\la \mu_0 , m^2 \ra < \pinf$, then $t \mapsto \la \mu_t , m^2 \ra$ is bounded in a right neighborhood of 0.
\end{itemize}
\end{definition}
The global behavior of this equation has been studied first for monodisperse initial conditions (i.e. $\mu_0=\delta_1$), in which case it can be proven that there is a unique solution $(\mu_t)_{t\geq 0}$ on $\R^+$, which is also explicit, see \cite{McLeod, Kokholm}. This solution clearly exhibits the gelation phase transition. Up to  the gelation time $\Tgel=1$, the total mass $\la \mu_t, m \ra$ is constant and equal to 1, and then it decreases: $\la \mu_t,m \ra = 1/t$ for $t\geq1$. Moreover, the second moment $\la \mu_t,m^2 \ra$ is finite before time $1$, and then infinite on $[1,+\infty)$.
It is also known in the literature that for any nonzero initial conditions, there is a gelation time $0 < \Tgel  < \pinf$, such that there is a unique solution to \eqref{smolu2} on $[0,\Tgel )$, and $\la \mu_t , m^2 \ra \to \pinf$ when $t \to \Tgel ^-$: see e.g. \cite{FouLauWP}.
\begin{theorem}\label{thsmolu}
Let $\mu_0\in\Mcc$ a non-null measure such that
\begin{equation}\label{condition}
\la \mu_0 , m\wedge 1 \ra =  \int_{(0,\pinf)} (m\wedge 1) \, \mu_0(\dm)<+\infty.
\end{equation}
We can then define
\[
M_0:=\la \mu_0 , m \ra \in (0,\pinf], \qquad K := \la \mu_0 , m^2 \ra  \in (0,\pinf],
\]
and the function
\begin{equation}\label{g_0}
g_0(x):=\la \mu_0, mx^m \ra = \int_{(0,\pinf)} mx^m \: \mu_0(\dm),  \qquad x\in[0,1]
\end{equation}
with $g_0(1)=M_0\in(0,+\infty]$. Let 
\begin{equation} \label{Tgel}
\Tgel := 1/K \in [0,+\infty).
\end{equation}
Then Smoluchowski's equation \eqref{smolu2} has a unique solution on $\R^+$.
It has the following properties.
\begin{enumerate}
\item The total mass $M_t=\la \mu_t, m \ra$ is continuous on $[0,+\infty)$. It is constant on $[0,\Tgel]$ and strictly decreasing on $[\Tgel,\pinf)$. It is analytic on $\R^+ \backslash \{ \Tgel \}$.
\item If the following limit exists
\[
\nu:= - \lim_{x \to 1^-} \frac{\left(g_0'(x)\right)^3}{g_0'(x)+xg_0''(x)} \in[-\infty,0],
\]
then the right derivative $\dot{M}_{\Tgel}$ of $M$ at $t=\Tgel$ is equal to $\nu$.
\item Let $m_0 = \inf \mathrm{supp}\ \mu_0\in[0,+\infty)$. When $t \to \pinf$,
\[
\frac{1}{tM_t} \to m_{0}.
\]
\item The second moment $\la \mu_t, m^2 \ra$ is finite for $t\in[0,\Tgel)$ and infinite for $t\in[\Tgel,+\infty)$.
\end{enumerate}
\end{theorem}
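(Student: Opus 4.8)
The plan is to encode a solution into the generating function $g_t(x):=\la\mu_t,mx^m\ra$ for $x\in[0,1]$, transform \eqref{smolu2} into a scalar first-order PDE for $g$, and run a (suitably modified) method of characteristics. First I would pass in \eqref{smolu2} from compactly supported test functions to $\phi(m)=mx^m$ with $x\in[0,1)$ fixed; this is legitimate because \eqref{condition} forces $\mu_0((1,\pinf))<\pinf$, so $g_s(x)$ and $g_s'(x)=\la\mu_s,m^2x^{m-1}\ra$ are finite for $x<1$, and because the standing assumption $\int_0^t\la\mu_s,m\ra^2\ds<\pinf$ controls the quadratic term (one also checks that the moment condition \eqref{condition} propagates, so that $g_t$ is smooth on $[0,1)$). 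Using $\la\mu_s,m^2x^m\ra=xg_s'(x)$ and $M_s:=\la\mu_s,m\ra$, the bilinear term collapses and yields
\[
g_t(x)=g_0(x)+\int_0^t xg_s'(x)\bigl(g_s(x)-M_s\bigr)\ds,\qquad x\in[0,1),
\]
i.e. $\partial_tg_t(x)=xg_t'(x)(g_t(x)-M_t)$ with $M_t=g_t(1^-)$ by monotone convergence. Since $g_t'\ge0$ and $g_t(x)\le M_t$ for $x\le1$, the right-hand side is $\le0$, so $t\mapsto g_t(x)$ is non-increasing for every $x<1$; hence $M_t$ is non-increasing, finite for $t>0$, and $b_t:=\exp(-\int_0^tM_s\ds)\in(0,1]$ is well defined.

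Next I would solve the PDE by characteristics: the datum $g_0(x_0)$ is transported along $t\mapsto X_t(x_0)=x_0e^{-g_0(x_0)t}/b_t=\phi_t(x_0)/b_t$, where $\phi_t(x):=xe^{-g_0(x)t}$ satisfies $\phi_t'(x)=e^{-g_0(x)t}(1-t\psi(x))$ and $\psi(x):=xg_0'(x)=\la\mu_0,m^2x^m\ra$ is continuous and strictly increasing from $\psi(0)=0$ to $\psi(1^-)=K$. Thus $\phi_t$ increases on $[0,\zeta_t]$ and decreases on $[\zeta_t,1]$, with $\zeta_t:=\psi^{-1}(1/t)$ (and $\zeta_t=1$ iff $t\le\Tgel=1/K$). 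A solution $g_t$ is non-decreasing in $x$, so it can only be read off the increasing branch of the flow; in particular the characteristic at position $1$ at time $t$ starts from some $z_t\in[0,\zeta_t]$ with $\phi_t(z_t)=b_t$, whence $M_t=g_t(1^-)=g_0(z_t)$. Differentiating $b_t=\phi_t(z_t)$ and using $\dot b_t=-M_tb_t$ gives $\dot z_t\,(1-t\psi(z_t))=0$, so either $\dot z_t=0$ or $z_t=\zeta_t$; combined with $z_0=1$, the continuity of $t\mapsto z_t$, and $z_t\le\zeta_t$, this forces $z_t\equiv1$ on $[0,\Tgel]$ and $z_t=\zeta_t=\psi^{-1}(1/t)$ on $(\Tgel,\pinf)$. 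Therefore
\[
M_t=M_0\ \text{ on }[0,\Tgel],\qquad M_t=g_0\bigl(\psi^{-1}(1/t)\bigr)\ \text{ on }(\Tgel,\pinf),
\]
a formula depending only on $\mu_0$. This identification of $M_t$ is where I expect the real work to be: making the characteristic picture rigorous near $x=1$, where characteristics accumulate and cross once $t>\Tgel$, and extracting from the weak formulation together with the ``coming down from infinity'' hypothesis exactly the selection rule (monotonicity of $g_t$) that pins $M_t$ down.

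With $M_t$, hence $b_t$, determined, $X_t$ restricts to a strictly increasing bijection $[0,z_t]\to[0,1]$, so $g_t(y)=g_0(X_t^{-1}(y))$ is uniquely determined on $[0,1)$, and $\mu_t$ is recovered from $y\mapsto g_t(y)/y=\int y^{m-1}\,m\,\mu_t(\dm)$ (a Laplace transform after $y=e^{-\lambda}$); this gives uniqueness. For existence one checks that this explicit $g_t$ solves the integral equation above and that the associated $(\mu_t)_{t\ge0}$ satisfies Definition~\ref{defsolsmolu}; existence is in any case available from known results, see Section~\ref{exist}.

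The stated properties then follow from the explicit $M_t$. (1) On $(0,\Tgel)$, $M_t\equiv M_0$; on $(\Tgel,\pinf)$, $M_t=g_0(\psi^{-1}(1/t))$ with $g_0,\psi$ analytic and strictly increasing on $(0,1)$, $\psi'=g_0'+xg_0''>0$, and $\psi^{-1}$ analytic on $(0,K)$; hence $M_t$ is analytic on $\R^+\setminus\{\Tgel\}$, strictly decreasing on $(\Tgel,\pinf)$, and continuous at $\Tgel$ (as $\psi^{-1}(1/t)\to1$ and $g_0(1)=M_0$). (2) Differentiating $M_t=g_0(z_t)$ under $t\psi(z_t)=1$ gives $\dot M_t=-z_t^2(g_0'(z_t))^3/(g_0'(z_t)+z_tg_0''(z_t))$, and letting $t\downarrow\Tgel$, so $z_t\uparrow1$, the right-hand side tends to $\nu$, which by continuity of $M$ at $\Tgel$ is the right derivative there. (3) For $t>\Tgel$, $1/(tM_t)=z_tg_0'(z_t)/g_0(z_t)$ because $t\psi(z_t)=1$, and $z_t=\psi^{-1}(1/t)\downarrow0$ as $t\to\pinf$; a Laplace-type (Tauberian) argument gives $\lim_{x\downarrow0}xg_0'(x)/g_0(x)=m_0$, since as $x\downarrow0$ both $\la\mu_0,m^2x^m\ra$ and $\la\mu_0,mx^m\ra$ are dominated by the $\mu_0$-mass near $m_0$. (4) Differentiating the characteristic identity gives, at $y=X_t(x_0)$, $yg_t'(y)=\psi(x_0)/(1-t\psi(x_0))$; letting $x_0\to z_t$ (so $y\to1^-$) this tends to $K/(1-tK)$ for $t<\Tgel$ and to $+\infty$ for $t\ge\Tgel$, so $\la\mu_t,m^2\ra=\lim_{y\to1^-}yg_t'(y)$ is finite precisely on $[0,\Tgel)$.
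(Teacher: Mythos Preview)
Your approach is essentially identical to the paper's: the same PDE for $g_t$, the same characteristic map (your $X_t$ is the paper's $\phi_t$, and your $b_t,z_t,\zeta_t$ correspond to its $1/\alpha_t,\ell_t,m_t$), the same selection principle via monotonicity of $g_t$ in $x$, and the same derivations of properties (1)--(4) from the explicit formula. The step you correctly flag as ``the real work'' --- showing $z_t=\zeta_t$ for all $t>\Tgel$ --- is handled in the paper not by assuming $z_t$ differentiable but by the Stieltjes chain rule (since $\ell_t$ is only known a priori to be monotone and continuous), yielding $\phi_t'(\ell_t)\,\mathrm{d}\ell_t=0$, followed by a short contradiction argument (Lemma~\ref{lemlt2}) to upgrade from ``$\mathrm{d}\ell_t$-a.e.'' to ``everywhere''.
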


\begin{rem}{\rm
\begin{itemize}
  \item This result allows to recover the pre- and post-gelation formulas obtained with no rigorous proof in some earlier papers \cite{ErnstCP,ErnstKoG,Kokholm,LeyvTschu,ShirRoesSR,ShirRoesPGM,StellZiff}. The decrease of the mass in $1/t$ when $m_0 > 0$ was also observed in these papers. Also, some upper bounds in $1/t$ for the mass were proven in \cite{EscobedoGiC,LauCCC}.
	\item If $m=0$, the mass tends to $0$ more slowly than $1/t$: small particles need to coagulate before any big particle can appear, and they coagulate really slowly. For instance, a straightforward computation shows that if $\mu_0(\dm)=e^{-m} \dm$, then $M_t \sim t^{-2/3}$. More generally, the explicit formula in Proposition \eqref{uniuni} allows to compute $M_t$ for any initial conditions.
	\item With this formula, it is easy to check that $\dot{M}_{\Tgel+}$ can be anything from $- \infty$ to $0$. For instance, $\dot{M}_{\Tgel+}=0$ for $g_0(x)=(1-x) \log(1-x)+x$,  $\dot{M}_0=- \infty$ for $g_0(x)=\sqrt{1-x} \log(1-x)+x$, and for $0 < \a < \pinf$, $\dot{M}_0=- \a$ for $g_0(x)=1 - \sqrt{1-x^{2 \a}}$. In particular, $M$ need not be convex on $[\Tgel,+\infty)$.
\end{itemize}
}
\end{rem}

\subsection{Preliminaries}

Let $\mu_0$ be defined as in the previous statement. 
We shall prove that, starting from $\mu_0$, there is a unique solution to \eqref{smolu2} on $\R^+$, and give a representation formula for this solution. This allows to study the behavior of the moments. Let us start with some easy lemmas. So take a solution $(\mu_t)_{t\geq 0}$ to \eqref{smolu2} and set
\begin{equation}
M_t=\la \mu_t , m \ra.
\label{Mt}
\end{equation}
The two following lemmas are easy to prove, using monotone and dominated convergence.
\begin{lemma}\label{lemM}
$(M_t)_{t\geq 0}$ is monotone non-increasing and right-conti\-nuous. Moreover, $M_t<+\infty$ for all $t>0$.
\end{lemma}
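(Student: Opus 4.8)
The plan is to establish the two assertions of Lemma \ref{lemM} directly from the weak formulation \eqref{smolu2}, using nothing more than monotone and dominated convergence together with the integrability hypothesis $\int_0^t \la \mu_s, m \ra^2 \, \ds < \pinf$ built into Definition \ref{defsolsmolu}. The point is that $M_t = \la \mu_t, m \ra$ involves the unbounded function $m \notin \Cc$, so one cannot plug $\phi(m) = m$ into \eqref{smolu2} directly; instead I would work with a suitable sequence of cutoff functions and pass to the limit.

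First I would prove monotonicity. Fix a smooth nondecreasing function $\chi : [0,\pinf) \to [0,1]$ with $\chi \equiv 1$ on $[0,1]$ and $\chi \equiv 0$ on $[2,\pinf)$, and for $R \geq 1$ set $\phi_R(m) := m\,\chi(m/R) \in \Cc$. The key observation is that $\phi_R$ is subadditive in the sense that $\Delta \phi_R(m,m') = \phi_R(m+m') - \phi_R(m) - \phi_R(m') \leq 0$ for all $m, m' > 0$: indeed $\phi_R(m) = \int_0^m (\chi(u/R) + (u/R)\chi'(u/R))\,\mathrm{d}u$ and the integrand $\psi_R(u) := \chi(u/R) + (u/R)\chi'(u/R)$ equals $(u\,\chi(u/R))'$, so $\phi_R$ is concave-like enough that $\phi_R(m+m') \le \phi_R(m) + \phi_R(m')$ — more carefully, one checks $\phi_R' = \psi_R$ is bounded and $\phi_R$ is subadditive because $\phi_R(m)/m = \chi(m/R)$ is nonincreasing in... — here I would simply choose $\chi$ with $\chi'\le 0$ replaced by the correct monotonicity, i.e. take $\phi_R$ concave increasing (e.g. $\phi_R(m) = R(1-e^{-m/R})$ works cleanly: it is increasing, concave, bounded by $R$, satisfies $\phi_R(m)\le m$ and $\phi_R(m)\uparrow m$ pointwise, and concavity gives $\Delta\phi_R \le 0$). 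With this choice, \eqref{smolu2} applied to $\phi_R$ gives
\[
\la \mu_t, \phi_R \ra = \la \mu_0, \phi_R \ra + \frac12 \int_0^t \la \mu_s(\dm)\mu_s(\dm'),\, mm'\,\Delta\phi_R(m,m')\ra\,\ds \leq \la \mu_0, \phi_R \ra,
\]
since the double integral is $\le 0$. For $0 \le t_1 \le t_2$, applying the same identity on $[t_1, t_2]$ (the equation propagates, as $\la\mu_{t_1},\phi_R\ra$ is finite) yields $\la\mu_{t_2},\phi_R\ra \le \la\mu_{t_1},\phi_R\ra$; letting $R \to \pinf$ and using monotone convergence $\phi_R \uparrow m$ gives $M_{t_2} \le M_{t_1}$, i.e. $(M_t)_{t\ge 0}$ is nonincreasing.

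Next I would prove finiteness of $M_t$ for $t > 0$. From the $\phi_R$-identity above, $\la \mu_t, \phi_R \ra \le \la \mu_0, \phi_R \ra \le \la \mu_0, m\wedge 1\ra \cdot C$ — wait, this only bounds $M_t$ by $M_0$, which may be infinite. Instead, to get finiteness for $t>0$ I would use the dissipation more quantitatively: keeping the negative term, for any $R$ and any $0 < s_0 < t$,
\[
\frac12\int_{s_0}^{t} \la \mu_s(\dm)\mu_s(\dm'),\, mm'\,(-\Delta\phi_R(m,m'))\ra\,\ds = \la\mu_{s_0},\phi_R\ra - \la\mu_t,\phi_R\ra \le \la\mu_{s_0},\phi_R\ra.
\]
Hmm, this bounds a quantity I don't directly control either. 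The cleanest route to finiteness is to test with $\phi \in \Cc$ supported near a fixed point and combine with the a priori bound $\int_0^t \la\mu_s,m\ra^2\,\ds < \pinf$: this last condition forces $\la\mu_s, m\ra < \pinf$ for Lebesgue-a.e. $s \in (0,t)$, pick such an $s_0$ with $s_0$ arbitrarily small, then the monotonicity just proved gives $M_t \le M_{s_0} < \pinf$ for all $t \ge s_0$; since $s_0$ can be taken arbitrarily small, $M_t < \pinf$ for all $t > 0$.

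For right-continuity of $(M_t)$, I would fix $t \ge 0$ and a sequence $t_n \downarrow t$. By monotonicity $M_{t_n}$ increases to some limit $L \le M_t$. For the reverse inequality, apply \eqref{smolu2} with $\phi_R$ between $t$ and $t_n$:
\[
\la\mu_{t_n},\phi_R\ra = \la\mu_t,\phi_R\ra + \frac12\int_t^{t_n}\la\mu_s(\dm)\mu_s(\dm'),\,mm'\,\Delta\phi_R(m,m')\ra\,\ds.
\]
The double-integral term is bounded in absolute value by $C_R \int_t^{t_n}\la\mu_s,m\ra^2\,\ds \to 0$ as $n \to \infty$, using $|mm'\,\Delta\phi_R(m,m')| \le C_R\, m m'$ (valid because $\phi_R$ has bounded first derivative, so $|\Delta\phi_R(m,m')| \le$ const$\cdot\min(m,m')\le$ const when... — in fact $|\Delta\phi_R|\le \|\phi_R'\|_\infty\,(m\wedge m')$, and combined with the extra $mm'$ factor one gets a bound $C_R\,mm'$, integrable against $\mu_s\otimes\mu_s$ with integral $\le C_R M_s^2$) and the a priori integrability $\int_0^t M_s^2\,\ds<\pinf$. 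Hence $\la\mu_{t_n},\phi_R\ra \to \la\mu_t,\phi_R\ra$ for each fixed $R$. Then $\la\mu_t,\phi_R\ra = \lim_n \la\mu_{t_n},\phi_R\ra \le \lim_n M_{t_n} = L$; letting $R\to\pinf$ by monotone convergence gives $M_t \le L$, so $M_t = L$ and right-continuity follows.

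I expect the main obstacle to be the bookkeeping around the cutoff: one must choose the family $\phi_R$ so that simultaneously (i) $\phi_R \in \Cc$, (ii) $\phi_R \uparrow m$ pointwise (monotone convergence for $\la\mu_t,\phi_R\ra \to M_t$), (iii) $\Delta\phi_R \le 0$ (subadditivity, giving monotonicity of $M_t$), and (iv) $|mm'\Delta\phi_R(m,m')|$ is dominated by an $\mu_s\otimes\mu_s$-integrable function uniformly in the relevant range (for the right-continuity limit). A concave increasing bounded $\phi_R$ such as $R(1-e^{-m/R})$, suitably truncated to have compact support without destroying concavity on the bulk, handles all four; making the truncation rigorous while preserving $\Delta\phi_R\le 0$ is the only genuinely fiddly point, and is exactly the kind of routine computation the statement's phrase ``easy to prove, using monotone and dominated convergence'' alludes to.
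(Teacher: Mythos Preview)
Your approach is correct and largely parallels the paper's. Both arguments use subadditive compactly-supported test functions converging pointwise to $m$; the paper takes the piecewise-linear tent $\phi^K(m)=m$ on $[0,K]$, $2K-m$ on $[K,2K]$, $0$ beyond, which is already in $\Cc$ and satisfies $\Delta\phi^K\le 0$, sidestepping the truncation issue you flag at the end. The finiteness argument is identical: $\int_0^t M_s^2\,\ds<\pinf$ forces $M_s<\pinf$ for a.e.\ $s$, and monotonicity propagates this to all $t>0$.

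The one genuine difference is right-continuity. You prove it by going back to the integral equation on $[t,t_n]$ and bounding the interaction term; the paper instead observes that $M_t=\sup_K\la\mu_t,\phi^K\ra$ is a supremum of continuous functions of $t$ (each $t\mapsto\la\mu_t,\phi^K\ra$ is continuous directly from \eqref{smolu2}), hence lower semicontinuous, and a nonincreasing lower-semicontinuous function is automatically right-continuous. This is shorter and avoids any quantitative control of the double integral.

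One small slip in your sketch: the justification ``$|\Delta\phi_R|\le\|\phi_R'\|_\infty(m\wedge m')$, and combined with the extra $mm'$ factor one gets a bound $C_R\,mm'$'' does not work, since $mm'(m\wedge m')$ is not dominated by $C\,mm'$. The claim $|mm'\Delta\phi_R|\le C_R\,mm'$ is nonetheless true, trivially, because $|\Delta\phi_R|\le 3\|\phi_R\|_\infty\le 3R$; use that bound instead.
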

\begin{proof}
Take $\phi^K(m)=m$ for $m \in [0,K]$, $\phi^K(m)=2K - m$ for $m \in [K,2K]$, and $\phi^K(m)=0$ for $m \geq 2K$, so that $\phi^K \in C_c$. Plugging $\phi^K$ in Smoluchowski's equation \eqref{smolu2}, letting $K \to \pinf$ and using Fatou's lemma readily shows that $(M_t)_{t\geq 0}$ is monotone non-increasing. Note also that $t\mapsto M_t = \sup_K \la \mu_t , \phi^K \ra$ is the supremum of a sequence of continuous functions and so is lower semi-continuous, which implies, for a monotone non-increasing function, right-continuity. Finiteness of $M_t$
is now obvious since $s \mapsto M_s^2$, and hence $s \mapsto M_s$, are integrable by Definition \ref{defsolsmolu}.
\end{proof}
\begin{lemma}
Assume that $t \mapsto \la \mu_t , m^2 \ra$ is bounded on some interval $[0,T_0]$. Then $M_t = M_0$ for $t \in [0,T_0]$.
\label{lemmt}
\end{lemma}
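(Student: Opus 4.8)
The plan is to re-run the truncation argument from the proof of Lemma~\ref{lemM}, this time keeping track of the size of the error term. Recall the cut-off functions $\phi^K\in\Cc$ used there: $\phi^K(m)=m$ on $[0,K]$, $\phi^K(m)=2K-m$ on $[K,2K]$, and $\phi^K(m)=0$ on $[2K,\pinf)$. Each $\phi^K$ is $1$-Lipschitz, satisfies $0\le\phi^K(m)\le m$, and $\phi^K(m)\uparrow m$ as $K\to\pinf$ (the family is non-decreasing in $K$). Plugging $\phi^K$ into \eqref{smolu2} gives, for $t\in(0,T_0]$,
\[
\la \mu_t,\phi^K\ra = \la\mu_0,\phi^K\ra + \frac12\int_0^t \la \mu_s(\dm)\,\mu_s(\dm'),\ mm'\,\Delta\phi^K(m,m')\ra\ds ,
\]
and, since $\la\mu_0,m^2\ra<\pinf$ (which follows from \eqref{condition} together with the hypothesis), all the quantities involved are finite. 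By monotone convergence the first two terms tend to $M_t$ and $M_0$ respectively, so it suffices to show that the integral term vanishes as $K\to\pinf$.

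The key observation is that $\Delta\phi^K(m,m')=0$ as soon as $m+m'\le K$ (all three arguments then lie in the linear part), while in general $|\Delta\phi^K(m,m')|\le 2\min(m,m')$: indeed $\phi^K$ being $1$-Lipschitz gives $|\phi^K(m+m')-\phi^K(m')|\le m$, and $0\le\phi^K(m)\le m$. Set $C:=\sup_{s\in[0,T_0]}\la\mu_s,m^2\ra<\pinf$. On $\{m+m'>K\}$ the larger of $m,m'$ exceeds $K/2$, so splitting according to whether $m\le m'$ or $m'<m$ yields
\[
\bigl|\la \mu_s(\dm)\,\mu_s(\dm'),\ mm'\,\Delta\phi^K(m,m')\ra\bigr|
\le 4\,\la\mu_s,m^2\ra\,\la\mu_s,\ m\,\un{\{m>K/2\}}\ra
\le \frac{8}{K}\,\la\mu_s,m^2\ra^2 \le \frac{8C^2}{K},
\]
where we used $m\,\un{\{m>K/2\}}\le\tfrac{2}{K}m^2$. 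Integrating over $[0,t]$ bounds the integral term by $4C^2t/K$, which tends to $0$; hence $M_t=M_0$ on $[0,T_0]$.

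The only genuinely delicate point is the passage to the limit in the quadratic term, and it is handled by the pointwise bound $|mm'\,\Delta\phi^K(m,m')|\le 2\min(m,m')\,mm'\,\un{\{m+m'>K\}}$ combined with a Chebyshev-type estimate off the uniform second-moment bound; the hypothesis of the lemma enters precisely, and only, here. Everything else — the choice of test functions and the monotone-convergence limits — is routine and already appeared in the proof of Lemma~\ref{lemM}. The statement is of course the rigorous form of the classical fact that mass is conserved up to the gelation time.
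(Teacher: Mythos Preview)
Your proof is correct and follows essentially the same approach the paper indicates (the paper merely says the lemma is ``easy to prove, using monotone and dominated convergence'' and gives no details). You use the same truncation functions $\phi^K$ from Lemma~\ref{lemM}, the same monotone-convergence limit for the linear terms, and control the quadratic remainder via the second-moment bound; the only difference is that you make the dominated-convergence step quantitative (an explicit $O(1/K)$ rate via Chebyshev) rather than invoking it abstractly.
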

By Lemma \ref{lemM}, $\la \mu_t , m \ra<+\infty$ for $t>0$, so that we can define
\begin{equation}\label{g}
g_t(x)=\la \mu_t, mx^m \ra = \int_{(0,\pinf)} mx^m \: \mu_t(\dm),  \qquad x\in[0,1], \ t>0,
\end{equation}
which is the generating function of $m\mu_t(\dm)$. 
Then, using a standard approximation procedure, it is easy to see that $g$ satisfies
\begin{equation}
\left \{
\begin{array}{ll}
{\displaystyle g_t(x) =g_0(x)+\int_0^t x(g_s(x)-M_s)\dplus{g_s}{x}(x)\ds, \quad t \geq 0, \ x\in(0,1)} \\
g_t(1) = M_t, \quad t\geq 0. \rule{0cm}{0.5cm}
\end{array}
\right.
\label{PDE}
\end{equation}
It is well-known, and will be proven again below, that $M_t=M_0$ for all $t\leq \Tgel$, since then, the PDE \eqref{PDE}
can be solved by the method of characteristics: the function $\phi_t(x):[0,1]\mapsto[0,1]$
\[
\phi_t(x)=xe^{t (M_0-g_0(x))}, \qquad x\in [0,1], \, t\leq \Tgel
\]
is one-to-one and onto, has an inverse $h_t:[0,1]\mapsto[0,1]$ and we find
\[
g_t(x) = g_0(h_t(x)), \qquad x\in[0,1], \, t \leq \Tgel.
\]
However $M_t$ is not necessary constant for $t>\Tgel$ and the form
of $\phi_t$ has to be modified; we thus define
\begin{equation}
\phi_t(x)=x \a_t e^{-t g_0(x)}, \qquad x\in [0,1], \, t>0 
\label{phit}
\end{equation}
where
\begin{equation}
\a_t:=\exp \left(\int_0^t M_s \ds\right), \qquad t\geq 0.
\label{alphat}
\end{equation}
For $t>\Tgel$, $M_t$ is possibly less than $M_0$ and  $\phi_t$, which depends
explicitly on $(M_s)_{s\in[0,t]}$, is possibly neither injective nor surjective. We shall prove
that it is indeed possible to find $\ell_t\in(0,1)$ such that $\phi_t(x):[0,1]\mapsto[0,\ell_t]$ is one-to-one
and $\ell_t$ is uniquely determined by $g_0$.

\subsection{Uniqueness of solutions}
Using an adaptation of the method of characteristics,
we are going to prove the following result. Note that in \cite{ShirRoesSR}, this properties are claimed to be true but a proof seems to lack. We will use the same techniques in the proof of Theorem \ref{thsmolubras} for the model with arms, but they are easier to understand in the present case. 
\begin{prop}\label{uniuni} Let $(\mu_t)_{t\geq 0}$ be a solution of Smoluchowski's equation
\eqref{smolu2}.
\begin{enumerate}
\item For all $t\in[0,\Tgel]$, $M_t=M_0=g_0(\ell_t)$, where $\ell_t:=1$. For  all $t>\Tgel$,
$M_t=g_0(\ell_t)$ where $\ell_t\in(0,1)$ is uniquely defined by
\begin{equation}\label{ell}
\ell_t \, g'_0(\ell_t)= \frac 1t.
\end{equation}
Moreover $\ell_t$ and $\phi_t(\cdot)$ satisfy
\begin{equation}\label{uuuu}
\phi_t'(\ell_t)=0, \qquad
\phi_t(\ell_t)=1>\phi_t(x), \quad \forall \, x\in(0,1).
\end{equation}
\item For all $t>0$, the function $\phi_t(\cdot)$ defined in \eqref{phit} has a right inverse
\begin{equation}
h_t:[0,1]\mapsto[0,\ell_t], \qquad \phi_t(h_t(x))=x, \quad x\in[0,1],
\label{ht}
\end{equation}
and
\begin{equation}\label{uuu}
g_t(x) = g_0(h_t(x)), \qquad t>0, \ x\in[0,1].
\end{equation}
\item The functions  $(\ell_t)_{t\geq 0}$ and  $(M_t)_{t\geq 0}$ are continuous.
\item $(\mu_t)_{t\geq 0}$ is uniquely defined by $\mu_0$.
\end{enumerate}
\end{prop}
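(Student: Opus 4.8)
The plan is to establish the four assertions of Proposition \ref{uniuni} essentially in the order stated, using the PDE \eqref{PDE} together with the ansatz \eqref{phit}--\eqref{alphat}. The central computation is to differentiate the candidate relation $g_t(\phi_t(x)) = g_0(x)$ along characteristics: if $g$ solves \eqref{PDE} and we set $y = \phi_t(x)$, then a direct calculation using $\dplus{}{t}\phi_t(x) = (M_t - g_0(x))\phi_t(x)$ shows that $t \mapsto g_t(\phi_t(x))$ has zero right derivative, hence is constant equal to $g_0(x)$. This is the heart of the method of characteristics adapted to the nonlocal drift $M_t$, and it is what forces \eqref{uuu} once we know $\phi_t$ is invertible onto its range.

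First I would analyze the geometry of $x \mapsto \phi_t(x) = x\a_t e^{-tg_0(x)}$ on $[0,1]$ for fixed $t > 0$. Since $g_0$ is nondecreasing, convex-type in the relevant sense, and $g_0' $ is nondecreasing with $g_0'(0^+)=m_0\mu_0(\{m_0\})$-type behaviour, the function $x\mapsto \log\phi_t(x) = \log x + \log\a_t - tg_0(x)$ has derivative $\frac1x - tg_0'(x)$, which is positive near $0$ and decreasing in $x$; so $\phi_t$ is unimodal, increasing up to a unique critical point $\ell_t$ determined by $\ell_t g_0'(\ell_t) = 1/t$ when such a point exists in $(0,1)$, i.e. when $1/t < \lim_{x\to1^-} xg_0'(x) = K$, equivalently $t > \Tgel = 1/K$; otherwise $\phi_t$ is strictly increasing on all of $[0,1]$ and we set $\ell_t = 1$. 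This gives \eqref{ell} and the first half of \eqref{uuuu}. To pin down $\phi_t(\ell_t) = 1$ — the second half of \eqref{uuuu} and the identity $M_t = g_0(\ell_t)$ — I would evaluate the characteristic identity at $x$ slightly below $1$ and at $x = \ell_t$, and use $g_t(\phi_t(x)) = g_0(x)$ together with the boundary condition $g_t(1) = M_t$: continuity of $g_t$ forces $\phi_t(\ell_t) = 1$ (the maximum of $\phi_t$ must reach the point $1$ where the boundary datum lives), and then $M_t = g_t(1) = g_t(\phi_t(\ell_t)) = g_0(\ell_t)$. One must treat $t \le \Tgel$ separately, where $\ell_t = 1$, $\phi_t(1) = \a_t e^{-tM_0} = 1$ (using $M_s \equiv M_0$, which follows from Lemma \ref{lemmt} once finiteness of the second moment on $[0,\Tgel)$ is in hand), recovering the classical characteristic formula.

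With the one-to-one map $\phi_t : [0,\ell_t] \to [0,1]$ established, part (2) follows: $h_t$ is its inverse extended by $h_t = \ell_t$ on the part of $[0,1]$ not in the range if needed — but in fact $\phi_t([0,\ell_t]) = [0,1]$ so $h_t$ is a genuine inverse — and \eqref{uuu} is the characteristic identity read backwards. For part (3), continuity of $\ell_t$ in $t$ comes from the implicit function theorem applied to $G(t,x) := x g_0'(x) - 1/t$ away from $\Tgel$ (using strict monotonicity of $x \mapsto xg_0'(x)$, which should be argued from $g_0$ being a generating function with $\mu_0$ non-null), plus a direct limiting argument at $t = \Tgel$ where $\ell_t \uparrow 1$; continuity of $M_t = g_0(\ell_t)$ then follows from continuity of $g_0$ on $[0,1]$ including at $1$ (where $g_0(1) = M_0$, finite or not — here one uses that $\ell_t < 1$ for $t > \Tgel$ so $g_0(\ell_t) < \infty$, and the jump/continuity at $\Tgel$ is handled by monotone convergence). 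Finally part (4): given $\mu_0$, the function $g_0$ is determined, hence $\ell_t$ and $M_t$ and $\a_t$ and $\phi_t$ and $h_t$ are all determined, hence $g_t(x) = g_0(h_t(x))$ is determined for every $t > 0$ and $x \in [0,1]$; since $g_t$ is the generating function of $m\mu_t(\dm)$, it determines $m\mu_t(\dm)$ as a measure on $(0,\infty)$ and therefore $\mu_t$ itself, giving uniqueness.

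The main obstacle I expect is the careful handling of the boundary at $x = 1$ and the possible infinitude of $M_0$ and $K$: one must justify that the characteristic identity $g_t(\phi_t(x)) = g_0(x)$, derived for $x \in (0,1)$ from the integral equation \eqref{PDE}, extends by continuity to $x = \ell_t$ and yields $M_t = g_0(\ell_t)$ with the correct normalization $\phi_t(\ell_t) = 1$, even when $g_0(1) = +\infty$ or $K = +\infty$ (so $\Tgel = 0$ and there is no pre-gelation regime). This requires a delicate argument combining the monotonicity/semicontinuity properties from Lemmas \ref{lemM}--\ref{lemmt}, the a priori integrability $\int_0^t M_s^2\,ds < \infty$, and a limiting procedure $x \uparrow 1$ controlled by the structure of $\phi_t$ near its maximum. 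The differentiation-along-characteristics step itself, while computational, must also be done with care regarding the one-sided derivative $\dplus{g_s}{x}$ appearing in \eqref{PDE} and the regularity of $\phi_t$ in $t$.
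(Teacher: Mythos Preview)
Your plan contains a genuine gap at precisely the point the paper identifies as the crux of the matter (Remark \ref{see}). You define $\ell_t$ for $t>\Tgel$ as the \emph{critical point} of $\phi_t$, i.e.\ the solution of $\ell_t g_0'(\ell_t)=1/t$, and then argue that ``continuity of $g_t$ forces $\phi_t(\ell_t)=1$''. But this claim is not justified by your sketch. The function $\phi_t(x)=x\,\alpha_t e^{-tg_0(x)}$ depends on $\alpha_t=\exp(\int_0^t M_s\,ds)$ and hence on the \emph{unknown} trajectory $(M_s)_{s\le t}$; the critical point $\ell_t$ (your definition) is determined by $g_0$ alone, whereas the \emph{value} $\phi_t(\ell_t)$ is not. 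Nothing in your argument excludes the scenario $\phi_t(\ell_t)>1$: in that case there is a first crossing point $a<\ell_t$ with $\phi_t(a)=1$ and $\phi_t'(a)>0$; the characteristic identity $g_t(\phi_t(x))=g_0(x)$ holds on $[0,a]$, giving $M_t=g_t(1)=g_0(a)$, and this is perfectly consistent with continuity and monotonicity of $g_t$. No contradiction arises from ``continuity of $g_t$'' alone.

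The paper proceeds in the opposite order: it first defines $\ell_t$ as the \emph{first point where $\phi_t=1$} (equation \eqref{lll}), establishes the characteristic identity on $[0,\ell_t]$ (Lemma \ref{lemlt1}), shows $\ell_t<1$ for $t>\Tgel$ via a monotonicity contradiction, and then --- this is the step you are missing --- proves in Lemma \ref{lemlt2} that this first crossing is actually the maximum, i.e.\ $\phi_t'(\ell_t)=0$. That proof uses a Stieltjes chain rule on $t\mapsto\phi_t(\ell_t)\equiv 1$ to obtain $\phi_t'(\ell_t)\,d\ell_t=0$, hence $\phi_t'(\ell_t)=0$ for $d\ell_t$-a.e.\ $t$, followed by a continuity/contradiction argument to upgrade this to all $t>\Tgel$. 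Only then does one obtain \eqref{ell}, and with it the fact that $M_t$, $\alpha_t$, $\phi_t$ are determined by $\mu_0$. Your sketch inverts this logic and thereby assumes the conclusion; to repair it you would need an independent argument ruling out the overshoot $\phi_t(\ell_t)>1$, and the natural one is exactly the paper's Lemma \ref{lemlt2}.
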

\begin{figure}[htb]
\begin{tabular}{cc}
\includegraphics[width=7.1cm]{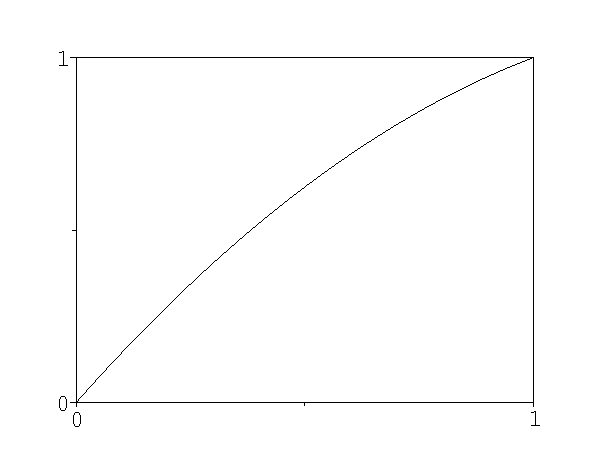}& \includegraphics[width=7.1cm]{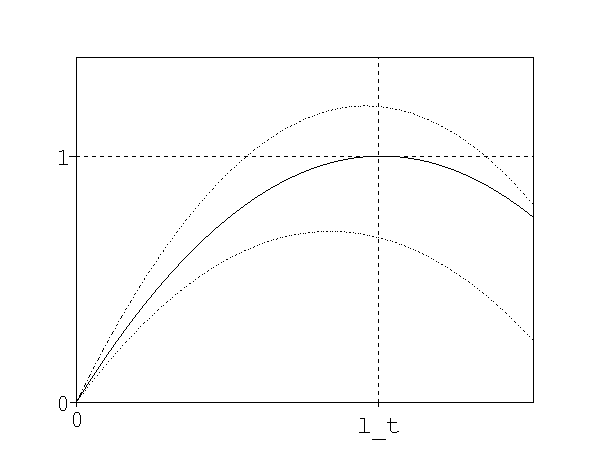}
\end{tabular}
\caption{$\phi_t$ before and after gelation. The dotted lines represent what $\phi_t$ may look like. The solid one is the actual $\phi_t$.}
\end{figure}
\begin{rem}\label{see}{\rm 
\begin{itemize}
  \item For all $t\leq \Tgel$, $M_t=M_0$, $\ell_t=1$ and $\phi_t:[0,1]\mapsto[0,1]$ is
one-to-one and onto. The first thing one needs to prove
is that for all $t>\Tgel$, $\ell_t<1$, i.e. there is indeed $x\in[0,1]$
such that $\phi_t(x)=1$, see Lemma \ref{ti}; the second one, is that $\ell_t=m_t$, i.e. $\phi_t(\cdot)$ has an absolute maximum
at $\ell_t$, see Lemma \ref{lemlt2}. In other words, one has to exclude the dotted lines as possible profiles of
$\phi_t(\cdot)$ in Figure 1. These properties are not obvious, since
$\phi_t$ depends on $(M_s)_{s\in[0,t]}$ which is, at this point, unknown.
All other properties are derived from these two. 
  \item In \cite[Section 6]{ShirRoesSR} one finds a discussion
of post-gelation solutions, in particular of the results of our Proposition \ref{uniuni}.
However this discussion falls short of a complete proof, since the two above-mentioned properties 
are not proven. In particular, no precise statement about
what initial conditions can be considered is given.
\end{itemize}
}
\end{rem}
\noindent
The following lemma is a list of obvious but useful properties satisfied by $g$ and $\phi$.
\begin{lemma}
\label{defsolpde}
The function $g$ defined in \eqref{g} satisfies the following properties.
\begin{itemize}
\item[(a1)] $(t,x) \mapsto g_t(x)$ is finite and continuous on $[0,+\infty)\times [0,1)$;
\item[(a2)] For all $x \in [0,1)$, $t \mapsto g_t(x)$ is right differentiable on $(0,+\infty)$;
\item[(a3)] For all $t \geq 0$, $x \mapsto g_t(x)$ is analytic on $(0,1)$ and monotone non-decreasing;
\item[(a4)] For all $t > 0$, $x \mapsto g_t(x)\in[0,+\infty]$ is continuous on $[0,1]$.
\end{itemize}
The function $\phi$ defined in \eqref{phit} satisfies the following properties.
\begin{itemize}
\item[(b1)] $\phi_t$ is continuous on $[0,1]$ and analytic on $(0,1)$;
\item[(b2)] $\phi_t(0)=0$, $\phi_t(1)=e^{-\int_0^t(M_0-M_s) \ds}\in[0,1]$;
\item[(b3)] $\phi_t'(x)=\a_t e^{-t g_0(x)} (1-txg_0'(x))$ for $x \in (0,1)$;
\item[(b4)] For $t \leq \Tgel$, $\phi_t$ is increasing. For $t > \Tgel$, $x \mapsto x g_0'(x)$ is increasing, $\phi_t'(0)>0$ and $\phi_t'(1) < 0$. In particular, for $t > \Tgel$, there is precisely one point $m_t \in (0,1)$ such that
\begin{equation}\label{mmm}
\phi_t'(m_t)=0.
\end{equation}
\item[(b5)] For $t > \Tgel$, $\phi_t$ is increasing on $[0,m_t]$ and decreasing on $[m_t,1]$.
\end{itemize}
Moreover,
\begin{itemize}
\item[(c1)] The map $(t,x) \mapsto \phi_t(x)$ is continuous on $\R^+ \times [0,1)$;
\item[(c2)] The map $(t,x) \mapsto \phi_t'(x)$ is continuous on $\R^+ \times (0,1)$;
\item[(c3)] For every $x \in [0,1)$, $t \mapsto \phi_t(x)$ is right differentiable and
\begin{equation}\label{phi}
\dplus{\phi_t}{t} = \phi_t(x) (M_t - g_0(x)) \qquad x\in[0,1), \, t\geq 0.
\end{equation}
\end{itemize}
\end{lemma}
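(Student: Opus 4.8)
The plan is to verify the three groups of properties in turn, all by direct and elementary arguments. The statements on $g$ rest on the definition \eqref{g}, on $M_t<+\infty$ for $t>0$ (Lemma~\ref{lemM}) and on the integral equation \eqref{PDE}. The workhorse is the pointwise bound $mx^m\le C_x\,(m\wedge1)$, valid for each $x\in[0,1)$ with $C_x:=\max\!\bigl(1,\sup_{m\ge1}mx^m\bigr)<+\infty$, together with the analogous bound $m^2x^{m-1}\le C'_x\,(m\wedge1)$ for $x$ in compact subsets of $(0,1)$. Since $\la\mu_t,m\wedge1\ra\le M_t<+\infty$ for $t>0$ and $\la\mu_0,m\wedge1\ra<+\infty$ by \eqref{condition}, the first bound gives finiteness of $g_t(x)$ on $[0,+\infty)\times[0,1)$. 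For fixed $t$, writing $x=e^{-u}$ exhibits $g_t$ as a Laplace transform of $m\,\mu_t(\dm)$, hence analytic on $(0,1)$; and $x\mapsto g_t(x)$ is non-decreasing because $\mu_t\ge0$ and $x\mapsto x^m$ is: this is (a3). Joint continuity (a1) and right differentiability in $t$ (a2) follow from \eqref{PDE}: the bounds above show that, for $x$ in a compact subset of $[0,1)$, the integrand $s\mapsto x(g_s(x)-M_s)\,\partial^+_x g_s(x)$ is dominated on $[0,t]$ by a constant times $1+M_s^2$, which is integrable by Definition~\ref{defsolsmolu}; dominated convergence and the right-continuity of $M$ then yield continuity and right differentiability. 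Finally (a4): for fixed $t>0$, $x\mapsto g_t(x)$ is finite and non-decreasing on $[0,1)$, hence has a limit in $[0,+\infty]$ at $1^-$, equal to $\la\mu_t,m\ra=M_t=g_t(1)$ by monotone convergence.

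For $\phi$, properties (b1)--(b3) are immediate from \eqref{phit} and \eqref{alphat}: the same Laplace-transform argument gives $g_0$ continuous on $[0,1)$, analytic on $(0,1)$, with $g_0(0)=0$ and $g_0(1^-)=M_0$, so $\phi_t(x)=x\a_t e^{-tg_0(x)}$ is continuous on $[0,1]$ (with the convention $e^{-tM_0}=0$ when $M_0=+\infty$) and analytic on $(0,1)$; the identity $\phi_t(1)=\a_t e^{-tM_0}=e^{-\int_0^t(M_0-M_s)\ds}\in[0,1]$ follows from \eqref{alphat} and $M_s\le M_0$, and (b3) is one line of differentiation. The content of (b4)--(b5) is the behavior of $x\mapsto xg_0'(x)=\la\mu_0,m^2x^m\ra$, which is continuous, strictly increasing on $(0,1)$ since $\mu_0\ne0$, with $xg_0'(x)\downarrow0$ as $x\downarrow0$ and $xg_0'(x)\uparrow K$ as $x\uparrow1$ by monotone convergence. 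By (b3), $\phi_t'(x)$ has the sign of $1-txg_0'(x)$: if $t\le\Tgel=1/K$ then $txg_0'(x)<tK\le1$ throughout $(0,1)$, so $\phi_t$ is increasing; if $t>\Tgel$ then $x\mapsto1-txg_0'(x)$ is strictly decreasing, positive near $0$ (indeed $\phi_t'(0^+)=\a_t>0$) and negative near $1$, giving the unique zero $m_t\in(0,1)$ of \eqref{mmm} and the monotonicity pattern (b5).

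For the joint statements, $t\mapsto\a_t=\exp\!\bigl(\int_0^tM_s\ds\bigr)$ is continuous because $M\in L^1_{\mathrm{loc}}$ --- indeed $M_s\le1+M_s^2$ and $\int_0^tM_s^2\ds<+\infty$ --- so (c1) and (c2) follow from \eqref{phit} and (b3) together with the continuity of $g_0$ on $[0,1)$ and of $g_0'$ on $(0,1)$. For (c3), fix $x\in[0,1)$ and write $\phi_t(x)=x\exp\!\bigl(\int_0^tM_s\ds-tg_0(x)\bigr)$; since $M$ is right-continuous (Lemma~\ref{lemM}), $t\mapsto\int_0^tM_s\ds$ has right derivative $M_t$ at every $t$, hence the exponent has right derivative $M_t-g_0(x)$, and the chain rule for $\exp$ yields $\dplus{\phi_t}{t}=\phi_t(x)\bigl(M_t-g_0(x)\bigr)$. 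I expect the only points needing genuine care to be the joint continuity of $(t,x)\mapsto g_t(x)$ up to $t=0$, where $g_t(x)$ must be controlled uniformly near $t=0$ through the integrability of $M$ rather than through a bound on $M_t$, and checking that all formulas degenerate gracefully in the borderline cases $M_0=+\infty$ or $K=+\infty$ (the latter forcing $\Tgel=0$), so that the statements hold literally there; everything else is routine bookkeeping.
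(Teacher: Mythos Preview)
Your verification is correct and considerably more thorough than the paper, which offers no proof at all: the lemma is introduced merely as ``a list of obvious but useful properties satisfied by $g$ and $\phi$.'' Your approach --- dominating $mx^m$ and $m^2x^{m-1}$ by multiples of $m\wedge1$, exploiting the integral equation \eqref{PDE} together with $\int_0^t M_s^2\,\mathrm{d}s<+\infty$ for the $t$-regularity, and reading off the monotonicity of $\phi_t$ from the sign of $1-txg_0'(x)$ --- is exactly the natural one, and you correctly flag the one genuinely delicate point (uniform control near $t=0$ when $M_0=+\infty$). One small remark: in (b4) the paper's claim ``$\phi_t'(1)<0$'' should be read as ``$\phi_t'$ is negative in a left neighborhood of $1$'', which is what you prove and what is actually used; the literal value $\phi_t'(1^-)$ may be $-\infty$ or indeterminate when $K$ or $M_0$ is infinite.
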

Property (b5) implies that there are at most two points in $(0,1)$ where $\phi_t$ equals 1. Take $\ell_t$ to be the smallest, if any, i.e.
\begin{equation}\label{lll}
\ell_t = \inf \{ x \geq 0: \, \phi_t(x)=1 \} \qquad (\inf\emptyset:=1).
\end{equation}
\begin{lemma}\label{ti}
\begin{enumerate}
\item For every $t \geq 0$ and every $x \in [0,\ell_t]$
\begin{equation}
g_t ( \phi_t(x) ) = g_0(x).
\label{gtphit}
\end{equation}
\item For all $t \in [0,\Tgel]$, $\ell_t=1$, and for $t > \Tgel$, $0<\ell_t <1$. In particular,
for all $t>0$, $\phi_t(\ell_t)=1$ and
\begin{equation}
g_0(\ell_t)=g_t(1)=M_t.
\label{g0lt}
\end{equation}
\item Finally, $t\mapsto \ell_t$ is monotone non-increasing and continuous on $\R^+$.
\end{enumerate}
\label{lemlt1}
\end{lemma}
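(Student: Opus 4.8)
The plan is to run a variant of the method of characteristics along the curves $s\mapsto\phi_s(x)$ and then read all three assertions off the resulting identity together with the shape information on $\phi_t$ from Lemma \ref{defsolpde}. For Part (1), fix $x\in[0,1)$ and set $w(s):=g_s(\phi_s(x))-g_0(x)$. At any time where $\phi_s(x)<1$ one may compute a right $s$-derivative: by \eqref{PDE} together with (a2)--(a3), $\dplus{}{s}g_s(y)=y\,g_s'(y)\,(g_s(y)-M_s)$ for $y\in(0,1)$, while (c3) gives $\dplus{\phi_s}{s}(x)=\phi_s(x)(M_s-g_0(x))$. Chaining these (legitimate by the joint regularity in Lemma \ref{defsolpde} and the local boundedness of $g_s'$ on compact subsets of $(0,1)$) yields
\[
\dplus{}{s}w(s)=\phi_s(x)\,g_s'(\phi_s(x))\,w(s),
\]
so since $w(0)=0$ and the coefficient is locally bounded while $\phi_s(x)$ stays in a compact subset of $[0,1)$, a Gronwall argument for right derivatives gives $w\equiv 0$ there. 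It then remains to see that the characteristic cannot genuinely escape: the drift $M_s-g_0(x)$ in (c3) is non-increasing in $s$ (Lemma \ref{lemM}), so if $\phi_{s_0}(x)=1$ at a first time $s_0$, passing to the limit $s\uparrow s_0$ in the identity $g_s(\phi_s(x))=g_0(x)$ (valid on $[0,s_0)$) and using (a4) gives $g_0(x)=M_{s_0}$, whence $M_s-g_0(x)\le 0$ for $s\ge s_0$; thus $\phi_s(x)\le 1$ for every $s$, and $g_s(\phi_s(x))=g_0(x)$ extends to all $s$ by continuity. Finally \eqref{gtphit} passes to $x=\ell_t$ (and to $x=1$ when $\ell_t=1$) by letting $x$ increase to that endpoint and invoking (c1) and (a4); evaluated at $x=\ell_t$ with $\phi_t(\ell_t)=1$ this reads $g_t(1)=g_0(\ell_t)$, i.e. \eqref{g0lt}.

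For Part (2), first $t\le\Tgel$ forces $\ell_t=1$: since $xg_0'(x)=\la\mu_0,m^2x^m\ra$ is non-decreasing in $x$ with limit $g_0'(1^-)=K$, formula (b3) gives $\phi_t'(x)=\a_t e^{-tg_0(x)}(1-t\,xg_0'(x))>0$ on $(0,1)$ when $tK\le 1$, so $\phi_t$ is strictly increasing and, as $\phi_t(1)\le 1$ by (b2), $\phi_t(x)<1$ for all $x<1$, i.e. $\ell_t=1$. Conversely, suppose $\ell_t=1$ for some $t>0$; then, using Part (1) to rule out $\phi_t(x)>1$, we have $\phi_t(x)<1$ for every $x\in[0,1)$, so letting $x\to1^-$ in \eqref{gtphit} with (a4) gives $g_t(\phi_t(1))=M_0$; as $\phi_t(1)\le 1$ and $g_t$ is non-decreasing, $M_0=g_t(\phi_t(1))\le g_t(1)=M_t\le M_0$ (already a contradiction if $M_0=+\infty$, since then $M_t<+\infty$), so $M_t=M_0$, and strict monotonicity of $g_t$ (valid as $\mu_t\ne0$) forces $\phi_t(1)=1$. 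Hence $\int_0^t(M_0-M_s)\,ds=0$, so $M_s\equiv M_0$ on $[0,t]$ by Lemma \ref{lemM}, whence $\phi_s(y)=y\,e^{s(M_0-g_0(y))}$ and $\frac{\partial}{\partial y}\log\phi_t(y)\to 1-tK$ as $y\to1^-$; this limit is negative when $t>\Tgel=1/K$, which would give $\phi_t(y)>1$ for $y$ just below $1$, contradicting Part (1). Thus $\ell_t=1\iff t\le\Tgel$, and for $t>\Tgel$ we obtain $\ell_t<1$, $\phi_t(\ell_t)=1$ (the level set $\{\phi_t=1\}$ being closed), and \eqref{g0lt} by Part (1) at $x=\ell_t$.

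For Part (3), Part (2) gives $M_t=g_0(\ell_t)$ for all $t\ge0$, with $g_0$ continuous and strictly increasing on $[0,1]$ (because $\mu_0\ne0$) and $(M_t)$ non-increasing (Lemma \ref{lemM}); hence $s<t$ implies $g_0(\ell_s)=M_s\ge M_t=g_0(\ell_t)$, so $\ell_s\ge\ell_t$ and $\ell$ is non-increasing. For continuity it suffices to treat $t>\Tgel$, since $\ell\equiv1$ on $[0,\Tgel]$. Right-continuity: as $s\downarrow t$, $\ell_s\uparrow\ell_{t+}$ and $g_0(\ell_s)=M_s\to M_t$ by right-continuity of $M$, so $g_0(\ell_{t+})=g_0(\ell_t)$ and $\ell_{t+}=\ell_t$. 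Left-continuity: if $\ell_{t-}>\ell_t$, pick $y\in(\ell_t,\ell_{t-})$; for $s<t$ near $t$, $\ell_s\ge\ell_{t-}>y$, and since (b5) together with $\ell_s\le m_s$ makes $\phi_s$ increasing on $[0,\ell_s]\ni y$ with $\phi_s(\ell_s)=1$, we get $\phi_s(y)<1$, hence $\phi_t(y)\le1$ by (c1). On the other hand $\ell_{t-}=\lim_{s\uparrow t}\ell_s\le\lim_{s\uparrow t}m_s=m_t$, as $m_s=\Psi^{-1}(1/s)$ is continuous in $s$ with $\Psi(x)=xg_0'(x)$; so $y\in(\ell_t,m_t)$, where $\phi_t$ is strictly increasing and $\phi_t(\ell_t)=1$, forcing $\phi_t(y)>1$ — a contradiction. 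Hence $\ell_{t-}=\ell_t$.

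The main obstacle is the implication $t>\Tgel\Rightarrow\ell_t<1$ in Part (2): it says that after the gelation time $\phi_t$ really does attain the value $1$ somewhere in $(0,1)$, i.e. it excludes the dotted profiles of Figure 1, and the only lever available is the characteristic identity of Part (1). That identity, in turn, rests on a delicate control of $s\mapsto\phi_s(x)$ as it approaches the boundary $y=1$, where $g_s'$ need no longer be bounded once $\la\mu_s,m^2\ra=+\infty$ and where $M_s=g_s(1)$ is only known to be right-continuous; this is the technical heart. The remainder is bookkeeping with the monotonicity of $M$, the profile of $\phi_t$ from Lemma \ref{defsolpde}, and elementary continuity arguments.
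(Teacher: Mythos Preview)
There is a genuine gap in your Part (1). Your escape analysis asserts that if $\phi_{s_0}(x)=1$ at a first time $s_0$, then ``passing to the limit $s\uparrow s_0$ in $g_s(\phi_s(x))=g_0(x)$ and using (a4) gives $g_0(x)=M_{s_0}$''. But (a4) is continuity of $y\mapsto g_t(y)$ on $[0,1]$ for each \emph{fixed} $t$; what you would need is joint continuity of $(s,y)\mapsto g_s(y)$ at the corner $(s_0,1)$, and this is not available---indeed it would amount to left-continuity of $M$ at $s_0$, which is part of what the lemma is meant to establish. From the identity on $[0,s_0)$ and monotonicity of $g_s$ you can only extract $g_0(x)\le M_{s_0^-}$. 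If $M$ jumps at $s_0$, or even if $M$ is continuous there but $g_0(x)<M_{s_0}$ strictly, then the right derivative $\dplus{\phi_s}{s}(x)\big|_{s=s_0}=M_{s_0}-g_0(x)>0$ and the characteristic escapes above $1$; nothing in your argument excludes this. Once a characteristic can exit $[0,1]$, the Gronwall step (whose coefficient $\phi_s(x)g_s'(\phi_s(x))$ is only controlled while $\phi_s(x)$ sits in a compact of $[0,1)$) no longer delivers \eqref{gtphit}, and the parts of (2) and (3) that rest on \eqref{gtphit} are left unproven.

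The paper sidesteps this boundary issue entirely. It fixes a compact $[a,b]\subset(0,1)$, runs the Gronwall argument only for the short time during which $\phi_t([a,b])\subset(0,1)$, and then invokes \emph{analyticity in $x$} of both $x\mapsto g_t(\phi_t(x))$ and $x\mapsto g_0(x)$ to propagate the identity at once to all of $(0,\ell_t)$; a separate open--closed continuation argument in $t$ then gives \eqref{gtphit} for every $t$. No limit at $y=1$ is ever taken inside $g_s$. Your approach of following each characteristic globally would work if you knew, independently, that characteristics starting in $[0,\ell_t]$ remain in $[0,1]$ for all $s\le t$---but that is exactly the monotonicity of $\ell$ from Part (3), so the argument is circular. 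The simplest repair is to keep your ODE computation but run it only on a small interval $[a,b]$ for small time, then use analyticity as the paper does. Your treatment of the forward implication in Part (2) (via the sign of $\phi_t'$ on $(0,1)$) and of Part (3) is otherwise sound and close to the paper's.
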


\begin{proof} \begin{enumerate}
\item Let us first prove that there exists $\tau>0$ such that \eqref{gtphit} holds for $t\in[0,\tau[$. Fix $0<a<b<1$. Since $0 < \min_{[a,b]}\phi_0 < \max_{[a,b]}\phi_0 < 1$, then by property (c1) there is $\tau>0$ such that
\[
0 < \min_{[a,b]}\phi_t <  \max_{[a,b]}\phi_t < 1, \qquad \forall\, t \in [0,\tau).
\]
So, for a fixed $x \in [a,b]$, the function
\[
u_t:=g_t(\phi_t(x))-g_0(x)
\]
is well-defined and using \eqref{PDE} and \eqref{phi}, we see that
\begin{align*}
u_t= & \int_0^t \left(\dplus{g_s}{s}(\phi_s(x)) + \d{g_s}{x} (\phi_s(x))
\dplus{\phi_s}{s}(x) \right)\ds = \int_0^t \gamma_s \, u_s \ds
\end{align*}
where
\[
\gamma_t := \d{g_t}{x} (\phi_t(x)) \, \phi_t(x), \qquad t> 0.
\]
Since $x\in[0,1)$, $\sup_{t\in[0,\tau)} |\gamma_t|<+\infty$ and therefore $u_t\equiv 0$. Hence \eqref{gtphit} holds for $x\in[a,b]$ and $t\in[0,\tau[$.
Since both terms of \eqref{gtphit} are analytic functions of $x$ on $(0,\ell_t)$, by analytic continuation, \eqref{gtphit} actually holds on $(0,\ell_t)$, and hence on $[0,\ell_t]$ by continuity.
\item Let us now extend this formula to $t\in \R^+$. Let
\[
T= \sup \{t > 0: \ \forall \, s \in [0,t], \ \forall \, x \in [0,\ell_s], \ g_s(\phi_s(x))=g_0(x) \} \geq \tau > 0,
\]
assume $T < \pinf$, and denote by $\ell$ the left limit of $(\ell_t)_{t\geq 0}$ at $T$. First, $\ell$ cannot be $0$, since otherwise we would get when $s \to T^-$
\[
1 = \phi_s(\ell_s)=\ell_s \a_s e^{-s g_0(\ell_s)} \to 0.
\]
For every $t < T^-$, $0< \ell \leq \ell_t$, so for every $x \in (0, \ell)$, $g_t(\phi_t(x))=g_0(x)$ and $\phi_t(x) < 1$. Using the continuity property (c1) and passing to the limit when $t \to T^-$ in this equality, we get
\[
g_T(\phi_T(x))=g_0(x), \qquad \forall \, x \in (0, \ell).
\]
By the same reasoning as in point (i), we obtain a $T' > T$ such that $g_t(\phi_t(x))=g_0(x)$ for all $t \in [T,T')$ and $x$ in a non-empty open subset of $(0,\ell)$. By analyticity and continuity, the formula $g_t(\phi_t(x))=g_0(x)$ holds for every $t \in [T,T')$ and $x \in [0,\ell_t]$. This contradicts the definition of $T$, and so $T = \pinf$. This concludes the proof of point (1) of the Lemma.
\item For the statement (2) of the Lemma, let us show first that $\la \mu_t , m^2 \ra$ is bounded on $[0,T_0)$, for every $T_0 \in [0,\Tgel)$. Let $T'$, the smallest time when this fails (provided of course that $\Tgel > 0$). By assumption (see Definition \ref{defsolsmolu}), $T' > 0$. Differentiating \eqref{gtphit} with respect to $x$ and having $x$ tend to $\ell_t=1$ gives, for $t < T'$,
\[
g_t'(1)= \la \mu_t , m^2 \ra = \frac{1}{1-tK}.
\]
This quantity explodes only when $t = \Tgel=1/K$, so $T' = \Tgel$.
\item The boundedness of $(\la \mu_t , m^2 \ra)_{t\in[0,T_0)}$ just proven for all $T_0 \in [0,\Tgel)$ and Lemma \ref{lemmt} imply that for $t \in [0,\Tgel)$, $M_t = M_0$. By
the definition \eqref{phit} of $\phi_t$, it follows that $\phi_t(1)=1$ for $t \in [0,\Tgel)$.
But $\phi_t$ is increasing, so $\ell_t = 1$ for $t \in [0,\Tgel)$. Assume now that for some $t > \Tgel$, $\ell_t = 1$. Then \eqref{gtphit} holds on $[0,1]$, and this is impossible since the right term is an increasing function of $x$, whereas the left one decreases in a left neighborhood of 1 since $\phi'_t(1)<0$. The fact that $\phi_t(\ell_t)=1$ follows then directly from the definition of $\ell_t$ and the continuity of $\phi_t(\cdot)$. Finally, the inequality $\ell_t > 0$ is obvious since $\phi_t(0)=0$, and computing \eqref{gtphit} at $x=\ell_t$ gives \eqref{g0lt}. This concludes the proof of (2).
\item We know that $\ell_t=1$ and $M_t=M_0$ for all $t< \Tgel$. Now, $g_0$ is strictly increasing and continuous. Since
$(M_t)_{t\geq 0}$ is monotone non-increasing and right-continuous by Lemma \ref{lemM}, so is $(\ell_t)_{t\geq 0}$ by \eqref{g0lt}. To get left-continuity of $(\ell_t)_{t>\Tgel}$, consider $t > \Tgel$, and let $\ell$ be the left limit of $\ell_s$ at $t$. We have $\ell \leq \ell_{t+(t-\Tgel)/2} < 1$, so by the continuity property (c1) above,
\[
1 = \phi_s(\ell_s) \underset{s \to t^-}{\to} \phi_t(\ell).
\]
Hence $\phi_t(\ell)=1$. Assume $\ell > \ell_t$ (that is, $\ell$ is the second point where $\phi_t$ reaches $1$). Take $x \in (\ell_t,\ell)$. By property (b5), $\phi_t(x) > 1$. But on the other hand, $x < \ell \leq \ell_s$ for $s <t$, so $\phi_s(x) \leq 1$, and so $\phi_t(x) \leq 1$, and this is a contradiction. So $\ell=\ell_t$ and $(\ell_t)_{t\geq 0}$ is indeed continuous. This concludes the proof of (3) and of the Lemma.
\end{enumerate}
\end{proof}

Finally, we will see that for $t > \Tgel$, $\ell_t=m_t$, so that $\phi_t$ increases from 0 to 1, which is its maximum, and then decreases. To this end, recall that $(\ell_t)_{t\geq 0}$ is monotone non-increasing and that $(\ell_t)_{t\geq 0}$ and $(\phi_t)_{t\geq 0}$ are continuous, so the chain rule for Stieltjes integrals and \eqref{phi} give
\begin{align*}
1 = \phi_t(\ell_t) & = \phi_0(\ell_0) + \int_0^t \phi_s'(\ell_s) \dl_s + \int_0^t \dplus{\phi_s}{s}(\ell_s) \ds \\
& = 1 + \int_0^t \phi_s'(\ell_s) \dl_s + \int_0^t \phi_s(\ell_s)(M_s-g_0(\ell_s)) \ds
\end{align*}
that is, with \eqref{g0lt},
\begin{equation}
\phi_t'(\ell_t) \dl_t = 0.
\label{phitprimlt}
\end{equation}
Hence,  $\dl_t$-a.e $\phi_t'(\ell_t)=0$, i.e. $\ell_t=m_t$. This is actually true for all $t > \Tgel$, as we shall now prove. This result also has its counterpart in the model with arms, namely part 3 of the proof of Theorem \ref{thsmolubras}.
\begin{lemma}\label{lemlt2}
For every $t > \Tgel$, $\phi_t'(\ell_t)=0$, i.e. $\ell_t=m_t$, the point where $\phi_t$ attains its maximum.
In particular,
\begin{equation}
\ell_t \, g_0'(\ell_t)=\frac1t, \qquad \forall \, t>\Tgel.
\label{deflt1}
\end{equation}
\end{lemma}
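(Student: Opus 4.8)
The plan is to show that the right-continuous non-increasing function $t\mapsto\ell_t$ has no downward jumps on $(\Tgel,\infty)$, and that its "continuous part" forces $\phi_t'(\ell_t)=0$ everywhere on that interval. From \eqref{phitprimlt} we already know $\phi_t'(\ell_t)=0$ for $\ell_t$-almost every $t$, i.e. away from the (at most countable) set of times at which $\ell$ has a genuine flat stretch boundary or decreases; combined with continuity of $(\ell_t)$ from Lemma \ref{lemlt1}(3), the only way $\phi_t'(\ell_t)$ could fail to vanish at some $t_0>\Tgel$ is if $\ell$ is \emph{strictly} decreasing through $t_0$ in such a way that $\dl_t$ charges no single point. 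So \eqref{phitprimlt} alone does not suffice, and the heart of the argument is to rule this out.

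First I would argue by contradiction: suppose $\phi_{t_0}'(\ell_{t_0})\neq 0$ for some $t_0>\Tgel$. By property (b4), for $t>\Tgel$ there is a unique critical point $m_t\in(0,1)$ with $\phi_t'(m_t)=0$, $\phi_t$ increasing on $[0,m_t]$ and decreasing on $[m_t,1]$, and by definition \eqref{lll} $\ell_t\le m_t$; strict inequality $\ell_{t_0}<m_{t_0}$ is exactly the assumption $\phi_{t_0}'(\ell_{t_0})>0$ (it cannot be negative since $\phi_{t_0}$ is still increasing at $\ell_{t_0}$). Using the continuity in $(t,x)$ of $\phi$ and $\phi'$ (properties (c1), (c2)) together with continuity of $t\mapsto m_t$ (which follows from the implicit definition $\phi_t'(m_t)=0$, i.e. $t\, m_t\, g_0'(m_t)=1$, and strict monotonicity of $x\mapsto xg_0'(x)$ from (b4)), the strict inequality $\ell_t<m_t$ persists on a whole interval $(t_0-\eta,t_0+\eta)$. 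On that interval $\phi_s'(\ell_s)>0$ for every $s$, so by \eqref{phitprimlt} the Stieltjes measure $\dl_s$ must vanish on $(t_0-\eta,t_0+\eta)$, i.e. $\ell_s$ is constant there, equal to some $\bar\ell<1$.

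Now I would derive the contradiction from this local constancy of $\ell$. Differentiating the identity \eqref{gtphit}, namely $g_s(\phi_s(x))=g_0(x)$, in $s$ at $x=\ell_s=\bar\ell$ and using \eqref{phi} for $\dplus{\phi_s}{s}$, or more directly combining \eqref{g0lt} ($g_0(\ell_s)=M_s$) with constancy of $\ell_s$, gives $M_s=g_0(\bar\ell)$ constant on $(t_0-\eta,t_0+\eta)$; but then $\alpha_s$ from \eqref{alphat} has a known exponential form there, and plugging into \eqref{phit} and the defining relation $\phi_s(\bar\ell)=1$ (from Lemma \ref{lemlt1}(2)) yields $\bar\ell\,\alpha_s e^{-s g_0(\bar\ell)}=1$ for all $s$ in the interval, which upon taking the $s$-derivative forces $M_s=g_0(\bar\ell)$ — consistent so far — but then look one derivative deeper, or rather evaluate $\phi_s'(\bar\ell)=\alpha_s e^{-s g_0(\bar\ell)}(1-s\bar\ell g_0'(\bar\ell))$ from (b3): since $\alpha_s e^{-s g_0(\bar\ell)}=1/\bar\ell$ is fixed, $\phi_s'(\bar\ell)$ is a strictly decreasing (affine in $s$) function that vanishes exactly at $s=1/(\bar\ell g_0'(\bar\ell))$. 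For $\phi_s'(\bar\ell)>0$ throughout $(t_0-\eta,t_0+\eta)$ we would need the whole interval to lie to the left of that zero; but then at the right endpoint of the maximal interval of constancy of $\ell$, say $s_1$, we must have $\phi_{s_1}'(\bar\ell)=0$ by \eqref{phitprimlt} and right-continuity, forcing $s_1=1/(\bar\ell g_0'(\bar\ell))$, and just past $s_1$ the point $\bar\ell$ overshoots $m_s$, making $\phi_s(\bar\ell)>1$, which contradicts $\phi_s(\ell_s)=1$ with $\ell_s\le\bar\ell$ (monotonicity) and $\phi_s$ increasing on $[0,m_s]$. Tracing this back, the only escape is $\ell_{t_0}=m_{t_0}$, i.e. $\phi_{t_0}'(\ell_{t_0})=0$; and then \eqref{deflt1} is immediate from (b3), since $\phi_t'(\ell_t)=\alpha_t e^{-t g_0(\ell_t)}(1-t\ell_t g_0'(\ell_t))=0$ with the prefactor nonzero gives $t\ell_t g_0'(\ell_t)=1$.

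The main obstacle I anticipate is the bookkeeping in that last step: one must handle the possibility that $\ell$ alternates between flat pieces and strictly-decreasing pieces, so the clean statement is really that \eqref{phitprimlt} plus the continuity of $(t,x)\mapsto\phi_t'(x)$ and of $t\mapsto m_t$ shows that the closed set $\{t>\Tgel:\phi_t'(\ell_t)=0\}$ is also open in $(\Tgel,\infty)$, hence all of it — the "openness" being exactly the local argument above (if $\ell_t<m_t$ on an interval then $\ell$ is constant there, and the endpoint analysis produces the contradiction). I expect the proof in the paper to package this as a connectedness argument rather than a direct computation, and to lean on Lemma \ref{lemlt1} for the continuity of $\ell_t$ that makes the endpoint of a flat stretch behave well.
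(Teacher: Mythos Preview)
Your overall strategy---assume $\phi_{t_0}'(\ell_{t_0})>0$, use \eqref{phitprimlt} to force $\ell$ to be constant nearby, then extract a contradiction from an endpoint of the flat stretch---is exactly the paper's. But your endpoint analysis is on the wrong side, and the specific contradiction you claim does not hold. For $s$ just past your $s_1$ we have $\ell_s<\bar\ell$, hence $M_s=g_0(\ell_s)<g_0(\bar\ell)$ by \eqref{g0lt}; then by \eqref{phi},
\[
\dplus{}{s}\log\phi_s(\bar\ell)=M_s-g_0(\bar\ell)<0,
\]
so $\phi_s(\bar\ell)<\phi_{s_1}(\bar\ell)=1$, not $>1$. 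So ``$\bar\ell$ overshoots $m_s$'' is true but does not produce $\phi_s(\bar\ell)>1$, and the purported contradiction with $\phi_s(\ell_s)=1$ evaporates. Your connectedness packaging is also off: the set $\{t:\phi_t'(\ell_t)=0\}$ is closed by continuity, but the argument does not show it is open; rather one shows directly that its complement is empty.

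The paper's fix is to look \emph{backward} from the bad time. Set $s=\sup\{r\in(\Tgel,t):\phi_r'(\ell_r)=0\}$; this set is nonempty since $\ell$ is strictly decreasing just after $\Tgel$, so $\mathrm{d}\ell$ has mass there and \eqref{phitprimlt} supplies points. By continuity the sup is attained, so $\phi_s'(\ell_s)=0$, i.e.\ $\ell_s g_0'(\ell_s)=1/s$, and $s<t$. On $(s,t]$ one has $\phi_r'(\ell_r)>0$, hence $\mathrm{d}\ell_r\equiv 0$ there by \eqref{phitprimlt}, so $\ell_t=\ell_s$. Combining with the always-valid inequality $\ell_t g_0'(\ell_t)\le 1/t$ (this is just $\phi_t'(\ell_t)\ge 0$) yields
\[
\frac1s=\ell_s g_0'(\ell_s)=\ell_t g_0'(\ell_t)\le\frac1t,
\]
contradicting $s<t$. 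Your own affine-in-$s$ observation $\phi_s'(\bar\ell)=\bar\ell^{-1}(1-s\bar\ell g_0'(\bar\ell))$ already encodes this: the unique zero is at $s^\ast=1/(\bar\ell g_0'(\bar\ell))$, so if the \emph{left} endpoint $a$ of the flat stretch satisfied $\phi_a'(\bar\ell)=0$ you would get $a=s^\ast$, while any interior point with $\phi_{t_0}'(\bar\ell)>0$ forces $t_0<s^\ast=a<t_0$. The right endpoint carries no such leverage.
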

\begin{proof}
First, recall that $\phi_t$ is increasing on $[0,\ell_t]$, so that $\phi_t'(\ell_t) \geq 0$, that is
\begin{equation}
\ell_t g_0'(\ell_t) \leq \frac1t.
\label{ineg}
\end{equation}
Assume now that there is a $t > \Tgel$ such that $\phi_t'(\ell_t)>0$, and consider
\[
s = \sup \{ r \in (\Tgel,t) : \, \phi_r'(\ell_r) = 0 \}.
\]
As noted before, $t \mapsto \ell_t$ is strictly decreasing for $t > \Tgel$ for any $t > \Tgel$, so $\dl_t([\Tgel,\Tgel+\varepsilon[)>0$ for all $\varepsilon>0$. Hence there are points $r<t$ where $\phi_r'(\ell_r)=0$, and thus the definition of $s$ does make sense.

Take now $(r_n)$ a sequence of points such that $T < r_n < t$, $\phi_r'(\ell_r)=0$ and $(r_n)$ converges to $s$. Since $0 < \ell_s < 1$, by property (c2) above, we get
\[
0 = \phi'_{r_n} (\ell_{r_n}) \to \phi'_s(\ell_s)
\]
so that $\phi'_s(\ell_s)=0$. This shows that $s < t$, and that for $r\in(s,t)$, $\phi'_r(\ell_r) >0$. Hence, by continuity of $(\ell_r)_{r\geq 0}$ and by \eqref{phitprimlt}, $(\ell_r)_{r\in[s,t]}$ is constant. This gives
\[
\frac1s = \ell_s \, g_0'(\ell_s) = \ell_t \, g_0'(\ell_t) \leq \frac1t
\]
which is a contradiction since $s< t$. In particular, $\phi'_t(\ell_t)=0$ implies \eqref{deflt1}.
\end{proof}

\begin{proof}[Proof of Proposition \ref{uniuni}]
By Lemma \ref{lemlt2}, necessarily $M_t = M_0$ on $[0,\Tgel]$ and for $t> \Tgel$, $M_t := g_t(1) = g_0(\ell_t)$, where
\begin{equation}
\ell_tg_0'(\ell_t)=\frac1t.
\label{deflt}
\end{equation}
Since $x \mapsto x g_0'(x)$ is strictly increasing from $[0,1]$ to $[0,K]$, where $K=\la \mu_0 , m^2 \ra =1/\Tgel$, this equation has a unique solution for $t > \Tgel$. Hence $M_t$ is uniquely defined.
Therefore $\alpha_t$ and $\phi_t$ are uniquely determined by $g_0$,
so we can define $\phi_t$ as in \eqref{phit}, and
Lemma \ref{lemlt1} shows that $g_t(\phi_t(x))=g_0(x)$ for $x \in [0,\ell_t]$, and that $\phi_t$ is a bijection from $[0,\ell_t]$ to $[0,1]$. So it has a right inverse $h_t$, and compounding by $h_t$ in the previous formula gives
\begin{equation}
g_t(x)=g_0(h_t(x))
\label{gt}
\end{equation}
for all $x \in [0,1]$, $t \geq 0$. Thus $g_t$ can be expressed by a formula involving only $g_0$ and in particular, $(\mu_t)_{t\geq 0}$ depends only on $\mu_0$. This shows the uniqueness of a solution to Smoluchowski's equation \eqref{smolu2}.
\end{proof}

\subsection{Behavior of the moments}

In this paragraph, we will study the behavior of the first and second moment of $(\mu_t)_{t\geq 0}$ as time passes,
showing how to prove rigorously and recover the results of \cite{ErnstCP}. For more general 
coagulation rates, one can obtain upper bounds of the same nature, see \cite{LauCCC}.

First consider the mass $M_t = \la \mu_t, m \ra$. We will always assume that $\Tgel < + \infty$. Let us start with the following lemma.
\begin{lemma}
Let $\nu \in \Mcc$ be a measure which integrates $x \mapsto y^x$ for small enough $y > 0$. Let $m_0$ be the infimum of its support. Then
\[
\lim_{y \to 0^+} \frac{\la \nu, x y^x \ra}{\la \nu, y^x \ra} = m_0.
\]
\label{lemprob}
\end{lemma}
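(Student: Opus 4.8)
The plan is a soft Laplace-principle argument. Writing $t=-\log y$ (so that $y\to0^+$ corresponds to $t\to\pinf$ and $y^x=e^{-tx}$), the assertion becomes $\la\nu,xe^{-tx}\ra/\la\nu,e^{-tx}\ra\to m_0$ as $t\to\pinf$, and the idea is simply that the weight $e^{-tx}$ charges only the left edge $\{x\approx m_0\}$ of $\mathrm{supp}\,\nu$ as $t$ grows. First I would dispose of the trivialities: by hypothesis $\la\nu,e^{-\tau x}\ra<\pinf$ for all large $\tau$, so the denominator is finite for large $t$, and it is $>0$ since $\nu$ is non-null; moreover $\nu$ is carried by $[m_0,\pinf)$, whence $\la\nu,xe^{-tx}\ra\ge m_0\la\nu,e^{-tx}\ra$, i.e. the ratio is always $\ge m_0$. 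So everything reduces to proving $\limsup_{t\to\pinf}\la\nu,xe^{-tx}\ra/\la\nu,e^{-tx}\ra\le m_0$.

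For this I would fix $\eps>0$ and split the numerator at $m_0+\eps$. On $(0,m_0+\eps)$ one has $x<m_0+\eps$, so that part contributes at most $(m_0+\eps)\la\nu,e^{-tx}\ra$; hence it suffices to show that the tail $R_t:=\int_{[m_0+\eps,\pinf)}xe^{-tx}\,\nu(\dx)$ satisfies $R_t/\la\nu,e^{-tx}\ra\to0$. For the denominator I would localize near $m_0$: since $m_0=\inf\mathrm{supp}\,\nu$, the mass $c:=\nu\big((0,m_0+\eps/2)\big)$ is strictly positive — were it $0$, the closed set $[m_0+\eps/2,\pinf)$ would carry $\nu$, forcing $\inf\mathrm{supp}\,\nu\ge m_0+\eps/2$ — and on that interval $e^{-tx}\ge e^{-t(m_0+\eps/2)}$, so $\la\nu,e^{-tx}\ra\ge c\,e^{-t(m_0+\eps/2)}$. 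For the tail, I would pick $\tau$ large enough that $A:=\la\nu,xe^{-\tau x}\ra<\pinf$ (possible since $xe^{-\tau x}\le e^{-1}e^{-(\tau-1)x}$ and $\la\nu,e^{-(\tau-1)x}\ra<\pinf$ for $\tau$ large); then for $t>\tau$ and $x\ge m_0+\eps$,
\[
xe^{-tx}=\big(xe^{-\tau x}\big)\,e^{-(t-\tau)x}\le\big(xe^{-\tau x}\big)\,e^{-(t-\tau)(m_0+\eps)},
\]
whence $R_t\le A\,e^{-(t-\tau)(m_0+\eps)}$. Dividing the two bounds, $R_t/\la\nu,e^{-tx}\ra\le (A/c)\,e^{\tau(m_0+\eps)}\,e^{-t\eps/2}\to0$ as $t\to\pinf$. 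Thus $\limsup_{t\to\pinf}\la\nu,xe^{-tx}\ra/\la\nu,e^{-tx}\ra\le m_0+\eps$ for every $\eps>0$, and letting $\eps\downarrow0$, together with the lower bound, finishes the proof.

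The computation is entirely elementary, so I do not anticipate a genuine obstacle; the one point I would make sure to write out carefully is the strict positivity $\nu\big((0,m_0+\delta)\big)>0$ for every $\delta>0$, which is precisely the characterization of $m_0$ as the infimum of the (closed) support and is what prevents the denominator from decaying faster than $e^{-t(m_0+\delta)}$. I would also flag that the statement tacitly requires $\nu\neq0$ for the ratio to be meaningful, which is automatic in the intended application with $\nu=\mu_t$, $t>0$.
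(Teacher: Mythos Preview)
Your argument is correct and follows the same Laplace-principle idea as the paper: both proofs obtain the lower bound trivially from $x\ge m_0$ on $\mathrm{supp}\,\nu$, and for the upper bound both split the integrals at $m_0+\alpha$ (resp.\ $m_0+\eps$) and exploit $\nu\big([m_0,m_0+\tfrac{\alpha}{2}]\big)>0$ to control the near part. The presentation differs in two respects. First, the paper argues by contradiction (assume $\limsup>m_0$, extract $\alpha>0$, reach an absurdity), whereas you give a direct $\eps$-argument; this is purely cosmetic. Second, and more substantively, your introduction of the auxiliary $\tau$ with $A:=\la\nu,xe^{-\tau x}\ra<\pinf$ is a genuine improvement: the paper bounds the far tail by $\la\nu,(x-m_0-\alpha)\un{\{x>m_0+\alpha\}}\ra\,y^{m_0+\alpha}$ and then lets $y\to0$, which tacitly requires $\la\nu,x\ra<\pinf$ --- a hypothesis that is \emph{not} assumed and actually fails in the intended application when $K=\la\mu_0,m^2\ra=\pinf$. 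Your device $xe^{-tx}=(xe^{-\tau x})e^{-(t-\tau)x}$ sidesteps this neatly. So your proof is essentially the paper's, but tidier on this point.
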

\begin{proof}
First, note that $x y^x \geq m y^x$ $\nu$-a.e. so
\[
\liminf_{y \to 0} \frac{\la \nu, x y^x \ra}{\la \nu, y^x \ra} \geq m_0.
\]
Let us prove now that
\[
\limsup_{y \to 0}  \frac{\la \nu, x y^x \ra}{\la \nu, y^x \ra} \leq m_0.
\]
Assume this is not true. Then, up to extraction of a subsequence, we may assume that there exists $\a > 0$ such that for arbitrary small $y \in (0,1)$, $\la \nu, x y^x \ra \geq (m_0+ \a) \la \nu, y^x \ra$. Hence $\la \nu, (x-m_0 - \a)y^x \ra \geq 0$, so
\begin{equation}
\la \nu, (x-m_0 - \a) y^x \un{\{x > m_0+\a\}} \ra \geq \la \nu, (m_0+\a-x) y^x \un{\{m_0 \leq x \leq m_0+\a\}} \ra.
\label{eqlem}
\end{equation}
But
\begin{align*}
\la \nu, (m_0+\a-x) y^x \un{\{m_0 \leq x \leq m_0+\a\}} \ra & \geq \la \nu, (m_0+\a-x) y^x \un{\{m_0 \leq x \leq m_0+\a/2\}} \ra \\
& \geq \la \nu, (m_0+\a-x) \un{\{m_0 \leq x \leq m_0+\a/2\}} \ra y^{m_0 + \a/2}
\end{align*}
and
\[
\la \nu, (x-m_0 - \a) y^x \un{\{x > m_0+\a\}} \ra \leq \la \nu, (x-m_0 - \a) \un{\{x > m_0+\a\}} \ra y^{m_0 + \a}.
\]
With \eqref{eqlem}, this shows that
\[
\la \nu, (x-m_0 - \a) \un{\{x > m_0+\a\}} \ra y^{\a/2} \geq \la \nu, (m_0+\a-x) \un{\{m_0 \leq x \leq m_0+\a/2\}} \ra
\]
and having $y$ tend to zero gives
\[
0 \geq \la \nu, (m_0+\a-x) \un{\{m_0 \leq x \leq m_0+\a/2\}} \ra
\]
which is a contradiction since $\nu([m_0,m_0+ \a/2]) > 0$.
\end{proof}
\begin{cor}
The mass of the system is continuous and positive. It is strictly decreasing on $[\Tgel,\pinf)$. Moreover, denote $m_0 = \inf \mathrm{supp}\ \mu_0$. Then
\[
\lim_{t \to \pinf}\frac{1}{tM_t} = m_0.
\]
\end{cor}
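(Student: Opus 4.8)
The plan is to read everything off the explicit description of $M_t$ furnished by Proposition \ref{uniuni}: namely $M_t = g_0(\ell_t)$ with $\ell_t = 1$ for $t \le \Tgel$, and, for $t > \Tgel$, $\ell_t \in (0,1)$ the unique solution of $\ell_t g_0'(\ell_t) = 1/t$. Continuity of $t \mapsto M_t$ is exactly part (3) of that Proposition, and positivity is immediate, since $\ell_t > 0$ and $g_0(x) = \la \mu_0, m x^m \ra > 0$ for $x > 0$ (recall $\mu_0$ is non-null).

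For the strict decrease on $[\Tgel, +\infty)$, I would first observe that $x \mapsto x g_0'(x) = \la \mu_0, m^2 x^m \ra$ is strictly increasing on $(0,1)$, its derivative being $\la \mu_0, m^3 x^{m-1} \ra > 0$. Hence the defining relation $\ell_t g_0'(\ell_t) = 1/t$ forces $t \mapsto \ell_t$ to be strictly decreasing on $(\Tgel, +\infty)$; combined with $\ell_t < 1 = \ell_{\Tgel}$ for $t > \Tgel$ (Lemma \ref{lemlt1}(2)), this makes $t \mapsto \ell_t$ strictly decreasing on all of $[\Tgel, +\infty)$, and since $g_0$ is strictly increasing, $t \mapsto M_t = g_0(\ell_t)$ is strictly decreasing there.

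The asymptotics is the one point requiring a short computation. As $(\ell_t)_{t > \Tgel}$ is decreasing and bounded below by $0$, it has a limit $\ell_\infty \ge 0$; if $\ell_\infty > 0$, then by continuity $\ell_t g_0'(\ell_t) \to \ell_\infty g_0'(\ell_\infty) > 0$, contradicting $\ell_t g_0'(\ell_t) = 1/t \to 0$. Thus $\ell_t \downarrow 0$. Using $1/t = \ell_t g_0'(\ell_t)$ and $M_t = g_0(\ell_t)$, and setting $\nu(\dm) := m\,\mu_0(\dm)$, I would write
\[
\frac{1}{tM_t} = \frac{\ell_t g_0'(\ell_t)}{g_0(\ell_t)} = \left.\frac{\la \nu, m\, y^m \ra}{\la \nu, y^m \ra}\right|_{y = \ell_t},
\]
where I used $g_0(x) = \la \nu, x^m \ra$ and $x g_0'(x) = \la \nu, m\, x^m \ra$ (differentiation under the integral sign being licit on $(0,1)$). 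The measure $\nu$ integrates $m \mapsto y^m$ for every $y \in (0,1)$, which is precisely the finiteness of $g_0(y)$ guaranteed by \eqref{condition}, and $\mathrm{supp}\ \nu = \mathrm{supp}\ \mu_0$ since $m > 0$ on $(0,+\infty)$. Applying Lemma \ref{lemprob} to $\nu$ with $y = \ell_t \to 0^+$ then gives $\tfrac{1}{tM_t} \to \inf \mathrm{supp}\ \nu = m_0$.

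The main obstacle, such as it is, is this last step: one must recognize $1/(tM_t)$ as the ratio of moments appearing in Lemma \ref{lemprob} after the change of measure $\nu = m\,\mu_0$, verify the mild integrability hypothesis of that lemma, and note that passing from $\mu_0$ to $\nu$ does not move the infimum of the support. Everything else is a direct consequence of Proposition \ref{uniuni}.
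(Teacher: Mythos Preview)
Your proof is correct and follows essentially the same approach as the paper's: both use $M_t = g_0(\ell_t)$ with $\ell_t g_0'(\ell_t) = 1/t$, rewrite $1/(tM_t)$ as the moment ratio of $\nu = m\,\mu_0$ evaluated at $y=\ell_t$, and apply Lemma \ref{lemprob}. You supply more detail than the paper does (notably the argument that $\ell_t \to 0$ and the justification of strict decrease), but the method is identical.
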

\begin{proof}
Recall that  $M_t=g_0(\ell_t)$ so the first properties follow from Lemma \ref{ti}. Denote now $\nu(\dm)=m\mu_0(\dm)$. For $t > \Tgel$, $\ell_tg_0'(\ell_t)=1/t$, so
\[
\frac{1}{tM_t}=\frac{\la \nu, x\ell_t^x \ra}{\la \nu, \ell_t^x \ra}
\]
and since $\ell_t \to 0$ when $t \to \pinf$, this tends to $m_0$ by Lemma \ref{lemprob}.
\end{proof}
We can also study the behavior of the mass for small times. Recall that before gelation, the mass is constant at $1$. We have seen that it is continuous at the gelation time. We may then wonder if its derivative is continuous, that is if $\dot{M}_{\Tgel+}$ is zero or not.
\begin{lemma}
The right derivative of $M$ at $\Tgel$ is given by
\[
\dot{M}_{\Tgel+}= - \lim_{x \to 1^-} \frac{g_0'(x)^3}{g_0'(x)+xg_0''(x)} \in [- \infty,0]
\]
provided the limit exists.
\label{Mpoint}
\end{lemma}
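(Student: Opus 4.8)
The plan is to exploit the explicit description of the mass obtained in Proposition~\ref{uniuni}: for $t>\Tgel$ one has $M_t=g_0(\ell_t)$, where $\ell_t\in(0,1)$ is the unique solution of $\ell_t g_0'(\ell_t)=1/t$ (see~\eqref{ell}), while $M_{\Tgel}=M_0=g_0(1)$ and $\Tgel=1/K=1/g_0'(1^-)$ by~\eqref{Tgel}. I would reparametrize the difference quotient $(M_t-M_{\Tgel})/(t-\Tgel)$ by the variable $\ell=\ell_t$, turning it into a $0/0$ indeterminate form as $\ell\to1^-$, and evaluate that form by L'H\^opital's rule.

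Concretely, set, for $\ell$ in a left neighbourhood of $1$,
\[
F(\ell):=g_0(\ell),\qquad G(\ell):=\frac1{\ell\,g_0'(\ell)},
\]
which are smooth there, since $g_0$ is analytic on $(0,1)$ and $g_0'>0$ on $(0,1)$; then $t=G(\ell_t)$ and $M_t=F(\ell_t)$ for $t>\Tgel$. Because $x\mapsto x g_0'(x)$ has derivative $g_0'(x)+xg_0''(x)=\la\mu_0,m^3x^{m-1}\ra>0$ on $(0,1)$, we get $G'<0$ there. By Lemma~\ref{lemlt1} and Lemma~\ref{lemlt2}, $t\mapsto\ell_t$ is continuous and strictly decreasing on $(\Tgel,\pinf)$ with $\ell_t\to1^-$ as $t\downarrow\Tgel$; equivalently $G$ is a decreasing homeomorphism of a half-neighbourhood $(1-\eps',1)$ of $1$ onto a half-neighbourhood $(\Tgel,\Tgel+\eps)$ of $\Tgel$, and $F(\ell)\to F(1)=M_{\Tgel}$, $G(\ell)\to G(1)=\Tgel$ as $\ell\to1^-$. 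Hence letting $t\downarrow\Tgel$ amounts to letting $\ell=\ell_t\uparrow1$, and
\[
\dot{M}_{\Tgel+}=\lim_{t\downarrow\Tgel}\frac{M_t-M_{\Tgel}}{t-\Tgel}
=\lim_{\ell\to1^-}\frac{F(\ell)-F(1)}{G(\ell)-G(1)}.
\]

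The right-hand side is a genuine $0/0$ form with $G'\neq0$ near $1$, so L'H\^opital's rule applies and shows it equals $\lim_{\ell\to1^-}F'(\ell)/G'(\ell)$ whenever the latter exists in $[-\infty,0]$. A direct computation gives $F'(\ell)=g_0'(\ell)$ and $G'(\ell)=-\bigl(g_0'(\ell)+\ell g_0''(\ell)\bigr)/(\ell g_0'(\ell))^2$, hence
\[
\frac{F'(\ell)}{G'(\ell)}=-\,\ell^2\,\frac{g_0'(\ell)^3}{g_0'(\ell)+\ell g_0''(\ell)}.
\]
As $\ell\to1^-$ the factor $\ell^2$ tends to $1$ while, by hypothesis, $g_0'(\ell)^3/(g_0'(\ell)+\ell g_0''(\ell))$ tends to some $L\in[0,+\infty]$; the product therefore tends to $L$, and $\dot{M}_{\Tgel+}=-L$, which is the asserted formula. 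No special treatment of the endpoints $L=0$ or $L=+\infty$ is needed, since L'H\^opital's rule accommodates infinite limits; the only degenerate situation, $\Tgel=0$ with $M_0=+\infty$, is handled separately (there the difference quotient equals $-\infty$ for every $t>0$, and one checks that the right-hand side is $-\infty$ as well).

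The substantive point is not the computation — routine once the change of variables is in place — but the bookkeeping that makes the substitution $t\leftrightarrow\ell_t$ legitimate near $\Tgel$: that $t\mapsto\ell_t$ is a strictly monotone continuous bijection of a half-neighbourhood of $\Tgel$ onto a half-neighbourhood of $1$, with $\ell_t\to1$. This is, however, already contained in Lemmas~\ref{lemlt1} and~\ref{lemlt2}, so the proof reduces to the L'H\^opital step above.
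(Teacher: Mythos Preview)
Your argument is correct and is essentially the paper's own proof: the paper differentiates $M_t=g_0(\ell_t)$ via the implicit relation $\ell_t g_0'(\ell_t)=1/t$ to obtain $\dot{M}_t=-\ell_t^2\,g_0'(\ell_t)^3/(g_0'(\ell_t)+\ell_t g_0''(\ell_t))$ for $t>\Tgel$ and then lets $\ell_t\to1$, which is exactly your L'H\^opital computation of $F'/G'$ rephrased in the $t$-variable. The only cosmetic difference is that the paper passes to the limit in $\dot{M}_t$ (relying implicitly on the mean-value theorem), whereas you package the same step as L'H\^opital's rule on the reparametrized difference quotient.
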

\begin{proof}[Proof of Lemma \ref{Mpoint}]
For $t > \Tgel$, $f(\ell_t)=1/t$ with $f(x)=xg_0'(x)$, and $0 < \ell_t < 1$. But $f'(\ell_t) \neq 0$, so by the inverse mapping theorem, $(\ell_t)_{t\geq 0}$ is differentiable and
\[
\dot{\ell}_t=- \frac{1}{t^2 f'(\ell_t)}.
\]
Using the fact that $M_t = g_0(\ell_t)$, it is then easy to see that
\[
\dot{M}_t=-\ell_t^2 \frac{g_0'(\ell_t)^3}{g_0'(\ell_t)+\ell_tg_0''(\ell_t)}.
\]
Since $(\ell_t)_{t\geq 0}$ is continuous at $\Tgel$ and $\ell_{\Tgel}=1$, the result follows.
\end{proof}
Recall that the gelation time is precisely the first time when the second moment $\la \mu_t, m^2 \ra$ of $(\mu_t)_{t\geq 0}$ becomes infinite. It actually remains infinite afterwards.
\begin{cor}
For all $t \geq \Tgel$, $\la \mu_t, m^2 \ra = \pinf$.
\end{cor}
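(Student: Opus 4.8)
The plan is to exploit the representation formula $g_t(x)=g_0(h_t(x))$ from Proposition~\ref{uniuni}, together with the already-established fact (Lemma~\ref{lemlt2}) that for $t>\Tgel$ one has $\ell_t<1$ with $\phi_t'(\ell_t)=0$. The second moment is $\la\mu_t,m^2\ra = g_t'(1^-)$ whenever this is finite, so the strategy is to show that the right-derivative of $g_t$ blows up at $x=1$ precisely because the right-inverse $h_t$ has infinite slope there. Concretely, differentiating $\phi_t(h_t(x))=x$ gives $h_t'(x) = 1/\phi_t'(h_t(x))$ for $x$ where $\phi_t'(h_t(x))\neq 0$; but as $x\to 1^-$ we have $h_t(x)\to\ell_t$ (since $h_t:[0,1]\to[0,\ell_t]$ is the increasing right-inverse and $\phi_t(\ell_t)=1$), and $\phi_t'(\ell_t)=0$ for $t>\Tgel$. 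Hence $h_t'(x)\to+\infty$ as $x\to1^-$, and since $g_t'(x)=g_0'(h_t(x))\,h_t'(x)$ with $g_0'(\ell_t)>0$ bounded away from zero near $x=1$ (because $\ell_t>0$ and $g_0'$ is continuous and strictly positive on $(0,1)$), we conclude $g_t'(x)\to+\infty$ as $x\to1^-$, i.e. $\la\mu_t,m^2\ra=+\infty$.

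For $t=\Tgel$ itself, the situation is the borderline one: $\ell_{\Tgel}=1$ and the inner differentiation of \eqref{gtphit} at $x\to1^-$ was already carried out in the proof of Lemma~\ref{ti}, giving $g_t'(1)=\la\mu_t,m^2\ra = 1/(1-tK)$ for $t<\Tgel$, which diverges as $t\uparrow\Tgel=1/K$. By lower semicontinuity (or monotone convergence applied to $\la\mu_t, m^2\wedge N\ra$ using continuity of $t\mapsto\mu_t$ against truncations), $\la\mu_{\Tgel},m^2\ra \geq \limsup_{t\uparrow\Tgel}\la\mu_t,m^2\ra = +\infty$. Alternatively one invokes Lemma~\ref{lemmt} in contrapositive form: if $\la\mu_t,m^2\ra$ were finite on a right neighborhood of some $t_0\geq\Tgel$ it would be bounded there, forcing $M_t$ constant on that neighborhood, contradicting the strict decrease of $M_t$ on $[\Tgel,+\infty)$ established in the preceding Corollary.

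I would present the argument through the contrapositive route since it is the cleanest and reuses exactly what has been proven: suppose $\la\mu_{t_0},m^2\ra<\pinf$ for some $t_0\geq\Tgel$. I would first note that finiteness of the second moment at one time propagates to a right neighborhood (this requires a short argument: differentiate \eqref{gtphit} in $x$ near $x=\ell_{t_0}$, or more directly use that $g_{t_0}'(1^-)<\infty$ forces $\phi_{t_0}'(\ell_{t_0})\neq 0$, hence $\ell_{t_0}=1$, hence by continuity of $t\mapsto\ell_t$ and the characterization \eqref{deflt1} we'd need $t_0\leq\Tgel$, so $t_0=\Tgel$; then a Grönwall-type estimate on $\la\mu_t,m^2\ra$ as in the pre-gelation regime gives boundedness on $[\Tgel,\Tgel+\eps)$). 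Then Lemma~\ref{lemmt} yields $M_t=M_{t_0}$ on $[t_0,t_0+\eps)$ or $[0,t_0+\eps)$, contradicting strict monotonicity of $M$ past $\Tgel$.

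The main obstacle is the propagation step: going from "finite at $t_0$" to "bounded on a right neighborhood of $t_0$". The slick resolution is precisely the observation that $\la\mu_{t_0},m^2\ra<\pinf$ is equivalent to $\phi_{t_0}'(\ell_{t_0})\neq0$ (finiteness of $g_{t_0}'(1^-)=g_0'(\ell_{t_0})/\phi_{t_0}'(\ell_{t_0})$), which by Lemma~\ref{lemlt2} can only happen when $t_0\leq\Tgel$, i.e. $t_0=\Tgel$. So the only case to rule out is $t_0=\Tgel$, and there the explicit formula $\la\mu_t,m^2\ra=1/(1-tK)\to+\infty$ as $t\uparrow\Tgel$ together with lower semicontinuity of $t\mapsto\la\mu_t,m^2\ra$ finishes it. This reduces the whole corollary to two one-line observations once Lemma~\ref{lemlt2} is in hand.
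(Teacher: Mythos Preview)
Your first paragraph is exactly the paper's proof: differentiate $g_t(\phi_t(x))=g_0(x)$ to get $\phi_t'(x)\,g_t'(\phi_t(x))=g_0'(x)$, then let $x\to\ell_t^-$; since $\phi_t'(\ell_t)=0$ and $g_0'(\ell_t)>0$, the conclusion $g_t'(1)=+\infty$ follows. That is the entire argument in the paper, and it is stated in three lines.

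Where you diverge is in singling out $t=\Tgel$ for special treatment via lower semicontinuity or a contrapositive through Lemma~\ref{lemmt}. This is correct but unnecessary: the same differentiation argument already covers $t=\Tgel$. Indeed by property~(b3), $\phi_{\Tgel}'(1)=\alpha_{\Tgel}e^{-\Tgel g_0(1)}(1-\Tgel\cdot 1\cdot g_0'(1))=\alpha_{\Tgel}e^{-\Tgel M_0}(1-\Tgel K)=0$, and $g_0'(1)=K\in(0,+\infty]$ is nonzero (if $K=+\infty$ then $\Tgel=0$ and the statement is immediate). So $\ell_{\Tgel}=1$ with $\phi_{\Tgel}'(\ell_{\Tgel})=0$, and the same limit computation goes through. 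The paper silently folds this case into the main argument; your alternative routes (lower semicontinuity of $t\mapsto\la\mu_t,m^2\ra$, or the contrapositive via constancy of $M_t$) are valid detours that buy nothing extra here.
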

\begin{proof}
Note that
\[
\la \mu_t , m^2 \ra = g_t'(1),
\]
this formula being understood as a monotone limit. By \eqref{gtphit}, for $x < \ell_t$
\[
\phi_t'(x) g_t'(\phi_t(x))=g_0'(x).
\]
When $x \to \ell_t^-$, $\phi_t'(x) \to 0$ by Lemma \ref{lemlt2}, and $g_0'(x) \to g_0'(\ell_t) \neq 0$ since $\ell_t > 0$. So
\[
g_t'(\phi_t(\ell_t))=g_t'(1)=\pinf.
\]
\end{proof}

\subsection{Existence of solutions}\label{exist}

Existence of solutions of \eqref{smolu2} is a well-known topic, see e.g. \cite{FouLauMLP}. However, the case $M_0=+\infty$ is apparently new, so that
we give a short proof for the general case based on previous papers, mainly \cite{ShirRoesSR}.

Let now $\mu_0\in\Mf$ be as in the statement
of Theorem \ref{thsmolu} and let us set $g_0$ as in \eqref{g_0}, $\ell_t$ and
$M_t$ as in point (1) of Proposition \ref{uniuni}, $\alpha_t$ and $\phi_t$ as in
\eqref{alphat} and \eqref{phit}. Then it is easy to see that $\phi_t$ admits a right
inverse $h_t$ satisfying \eqref{ht}, and we can thus define
\[
g_t(x):= g_0(h_t(x)), \qquad t\geq 0, \ x\in[0,1].
\]
It is an easy but tedious task to check that $g_t$ satisfies \eqref{PDE} and all properties
(a1)-(a4) above.
In particular,
if $g_0(1)=+\infty$ then $h_t(1)<1$ and therefore
$g_t(1)<+\infty$ for all $t>0$.
Following \cite{ShirRoesSR}, we can now prove the following.
\begin{prop}\label{shir}
For all $t>0$ there exists $\mu_t\in\Mf$ such that
\[
g_t(x) =\la \mu_t, mx^m \ra = \int_{(0,\pinf)} mx^m \: \mu_t(\dm),  \qquad x\in[0,1].
\]
\end{prop}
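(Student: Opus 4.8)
The plan is to reconstruct the measure $\mu_t$ from its generating function $g_t(x)=\la\mu_t,mx^m\ra=g_0(h_t(x))$ via a Bernstein-type / Hausdorff-moment argument. The point is that $g_t$ is an analytic function on $(0,1)$ with a power-series representation whose coefficients are automatically non-negative, and one only needs to check that these coefficients are the ``mass-weighted'' coefficients $m\,\mu_t(\{m\})$ of a genuine finite measure. First I would show that $x\mapsto g_t(x)$ is absolutely monotone on $(0,1)$: since $h_t=\phi_t^{-1}$ maps $[0,1]$ onto $[0,\ell_t]$ increasingly and analytically, and $g_0$ is absolutely monotone on $(0,1)$ (being $\la\mu_0,m x^m\ra$, a power series with non-negative coefficients), the composition $g_0\circ h_t$ is again absolutely monotone provided $h_t$ is. The key analytic input is therefore that $h_t$ has a convergent power-series expansion at $0$ with non-negative coefficients; this follows from the Lagrange inversion formula applied to $\phi_t(x)=x\,\alpha_t e^{-t g_0(x)}$, because $\phi_t'(0)=\alpha_t e^{-t g_0(0)}>0$ and the ``correction factor'' $\alpha_t e^{-t g_0(x)}$, having non-negative Taylor coefficients only after one inverts, still yields $h_t$ with non-negative coefficients by the classical combinatorial interpretation of Lagrange inversion (the coefficients count weighted trees). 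Concretely, writing $h_t(x)=\sum_{m\ge 1} b_m(t)\,x^m$ with $b_m(t)\ge 0$, one then expands $g_0(h_t(x))=\sum_{k\ge 1} a_k\, h_t(x)^k$ with $a_k\ge 0$ and collects powers of $x$, obtaining $g_t(x)=\sum_{m\ge 1} d_m(t)\,x^m$ with $d_m(t)\ge 0$.

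Next I would define the candidate measure by $\mu_t:=\sum_{m\ge 1}\frac{d_m(t)}{m}\,\delta_m$, so that $\la\mu_t,mx^m\ra=\sum_m d_m(t)x^m=g_t(x)$ for $x\in[0,1)$ by construction. It remains to check $\mu_t\in\Mf$, i.e. that $\mu_t$ is a finite measure on $(0,\pinf)$; since it is supported on positive integers this is the statement $\sum_m \frac{d_m(t)}{m}<\pinf$, which is weaker than $\sum_m d_m(t)=g_t(1^-)=M_t<\pinf$ for $t>0$ — and $M_t<\pinf$ is exactly property (a4) (equivalently Lemma~\ref{lemM}, or $M_t=g_0(\ell_t)$ with $\ell_t<1$ when $g_0(1)=\pinf$). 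Finally, by Abel's theorem the power series $g_t$ extends continuously to $[0,1]$ with $g_t(1)=\sum_m d_m(t)=M_t\le+\infty$, which also recovers property (a4) and closes the identification $g_t(x)=\la\mu_t,mx^m\ra$ on all of $[0,1]$.

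The main obstacle is establishing the non-negativity of the Taylor coefficients of $h_t=\phi_t^{-1}$, i.e. verifying rigorously that the Lagrange inversion of $\phi_t(x)=x\,\alpha_t e^{-t g_0(x)}$ produces a series with non-negative coefficients. The subtlety is that $e^{-t g_0(x)}$ itself has coefficients of mixed sign, so the naive bound fails; one must use the Lagrange inversion formula in the form $m\,b_m(t)=[x^{m-1}]\big(\alpha_t e^{-t g_0(x)}\big)^{-m}=[x^{m-1}]\,\alpha_t^{-m} e^{m t g_0(x)}$, and now $e^{m t g_0(x)}$ does have non-negative coefficients since $g_0$ does and $\exp$ has non-negative coefficients — so $b_m(t)\ge 0$ after all. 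This is the heart of the matter; everything else (the composition preserving absolute monotonicity, summability at $x=1$, continuity up to the boundary) is routine. Alternatively, one can bypass Lagrange inversion entirely and instead verify directly that $g_t$ as defined by $g_0\circ h_t$ solves the integral equation \eqref{PDE} — which the excerpt already asserts is ``an easy but tedious task'' — and then appeal to a uniqueness/regularity argument for \eqref{PDE} plus a separate Hausdorff-moment check; but the generating-function route above is cleaner and is the one I would write up, following \cite{ShirRoesSR}.
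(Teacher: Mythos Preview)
Your argument works only when $\mu_0$ is supported on the positive integers, but the setting of the proposition is general: $\mu_0\in\Mcc$ is an arbitrary Radon measure on $(0,+\infty)$ satisfying $\la\mu_0,m\wedge 1\ra<+\infty$. In that generality $g_0(x)=\int_{(0,+\infty)}m x^m\,\mu_0(\dm)$ is \emph{not} a power series in $x$, so there are no ``Taylor coefficients $a_k\ge 0$'' to speak of, Lagrange inversion does not apply, and the candidate $\mu_t=\sum_m (d_m(t)/m)\,\delta_m$ is simply the wrong object (for instance, if $\mu_0(\dm)=e^{-m}\dm$ then $\mu_t$ is absolutely continuous for all $t>0$). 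This is a genuine gap, not a detail: the entire absolute-monotonicity route, as you have written it, collapses outside the discrete case.

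The paper's proof handles the general case by passing to Laplace transforms: with the substitution $x=e^{-y}$, one sets $\Phi(y)=g_0(e^{-y})$, $\Gamma(y)=t g_0(e^{-y})$ and $G(y)=\Gamma(y)+y-\log\alpha_t=-\log\phi_t(e^{-y})$; both $\Phi$ and $\Gamma$ are completely monotone on $[0,+\infty)$, $G$ has a right inverse $G^{-1}(y)=-\log h_t(e^{-y})$, and a result of van Roessel--Shirvani shows $\Phi\circ G^{-1}$ is again completely monotone. Bernstein's theorem then produces a finite measure $\nu_t$ on $(0,+\infty)$ with $g_t(e^{-y})=\int e^{-my}\,\nu_t(\dm)$, and one recovers $\mu_t$ from $\nu_t$. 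Your Lagrange-inversion observation that $m\,b_m(t)=[x^{m-1}]\,\alpha_t^{-m}e^{mt g_0(x)}\ge 0$ is correct and gives a pleasant alternative proof \emph{in the discrete case}, but to cover the full statement you must replace ``absolutely monotone power series'' by ``completely monotone Laplace transform'' throughout, which is exactly the route the paper takes.
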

\begin{proof}
Let $t>0$ be fixed. We set for all $y\geq 0$
\[
\Phi(y) := g_0(e^{-y}), \quad
\Gamma(y) := tg_0(e^{-y}), \quad
G(y) := \Gamma(y)+y-\log\alpha_t= -\log\phi_t(e^{-y}).
\]
We recall
that $f:[0,+\infty)\mapsto[0,+\infty)$ is completely monotone if
$f$ is continuous on $[0,+\infty)$, infinitely many times differentiable on $(0,+\infty)$ and
\[
(-1)^k \frac{d^kf}{dy^k} (y) \geq 0,
\qquad \forall \, k\geq 0, \, y\in(0,+\infty).
\]
It is easy to see that $\Phi$ and $\Gamma$ are completely monotone. Moreover, $G$ has a right inverse
\[
G^{-1}:[0,+\infty)\mapsto[\log(1/\ell_t),+\infty), \quad G^{-1}(y)=-\log h_t(e^{-y}), \qquad y\geq 0,
\]
and therefore by the definitions
\[
g_0(h_t(e^{-y}))= \Phi(G^{-1}(y)), \qquad y\geq 0.
\]
By \cite[Thm. 3.2]{ShirRoesSR}, $\Phi\circ G^{-1}$ is completely monotone and therefore, by Bernstein's Theorem,
there exists a unique $\nu_t\in\Mf$ such that
\[
g_t(e^{-y})=g_0(h_t(e^{-y}))= \Phi(G^{-1}(y))=\int_{(0,\pinf)} e^{-ym}  \,  \nu_t(\dm), \qquad y\geq 0.
\]
Since $g_t(1)<+\infty$ for all $t>0$, we obtain that $\la \nu_t,m\ra<+\infty$, so that
we can set $\mu_t(\dm):=m\, \nu_t(\dm)$, and we have found that there is a unique $\mu_t\in\Mf$ such that
\[
g_t(x)=g_0(h_t(x))= \int_{(0,\pinf)} x^{m} \, m \,  \mu_t(\dm), \qquad x\in(0,1].
\]
\end{proof}

In order to show that $(\mu_t)_{t\geq 0}$ is a solution of Smoluchowski's equation in the sense
of Definition \ref{defsolsmolu}, we have to check
that $\int_0^\varepsilon M_t^2 \, \mathrm{d}t<+\infty$ for all $\varepsilon>0$. This is the content of the next result.

\begin{lemma}\label{gutt}
If $(\mu_t)_{t\geq 0}$ is the family constructed in Proposition \ref{shir}, then for all $\varepsilon>0$,
$\int_0^\varepsilon \la \mu_s, m\ra^2 \ds<  \pinf$.
\end{lemma}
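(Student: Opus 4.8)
The plan is to evaluate $\int_0^\varepsilon \la\mu_s,m\ra^2\,\ds$ almost exactly, by a change of variables $x=\ell_s$ followed by two Fubini--Tonelli applications and one integration by parts, after which finiteness will follow from \eqref{condition} alone.

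First I would dispose of the case $\Tgel>0$. Then $K=\la\mu_0,m^2\ra<+\infty$, and combined with \eqref{condition} this forces $M_0=\la\mu_0,m\ra<+\infty$ (split $\mu_0$ at mass $1$: on $[1,+\infty)$ use $m\le m^2$, on $(0,1)$ use $m=m\wedge 1$). Since $\la\mu_s,m\ra=g_s(1)=g_0(h_s(1))=g_0(\ell_s)$ and $(\ell_s)_{s\ge0}$ is non-increasing, we get $\la\mu_s,m\ra\le M_0$ for all $s$, so $\int_0^\varepsilon\la\mu_s,m\ra^2\,\ds\le\varepsilon M_0^2<+\infty$. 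Hence from now on I assume $\Tgel=0$, i.e. $K=+\infty$; then for every $s>0$ one has $\ell_s\in(0,1)$, $\ell_s g_0'(\ell_s)=1/s$, and $\la\mu_s,m\ra=g_0(\ell_s)<+\infty$ (finiteness of $g_t(1)$ for $t>0$ was noted in the construction). Note that the naive bound $g_0(\ell_s)\le C+1/s$, obtained by the same splitting, is not enough, since $s\mapsto s^{-2}$ is not integrable near $0$; one must use the structure of the equation. Set $f(x):=xg_0'(x)=\la\mu_0,m^2x^m\ra$, which is analytic and strictly increasing on $(0,1)$ with $f(0^+)=0$ and $f(1^-)=K=+\infty$; thus $\ell_s=f^{-1}(1/s)$, so $s\mapsto\ell_s$ is $C^1$, strictly decreasing on $(0,+\infty)$, with $\ell_s\to1$ as $s\to0^+$.

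Fix $\varepsilon>0$ and put $a:=\ell_\varepsilon\in(0,1)$. Expanding the square and applying Tonelli,
\[
\int_0^\varepsilon \la\mu_s,m\ra^2\,\ds=\int_0^\varepsilon g_0(\ell_s)^2\,\ds=\Big\la\mu_0(\dm)\,\mu_0(\dm'),\ mm'\!\int_0^\varepsilon\ell_s^{\,m+m'}\,\ds\Big\ra .
\]
For any $p>0$, the substitution $x=\ell_s$ (so $s=1/f(x)$, $\ds=-f'(x)f(x)^{-2}\,\dx$) followed by an integration by parts, using $f(a)=1/\varepsilon$ and $\lim_{x\to1^-}1/f(x)=1/K=0$, gives
\[
\int_0^\varepsilon\ell_s^{\,p}\,\ds=\int_a^1 x^p\,\frac{f'(x)}{f(x)^2}\,\dx=\varepsilon\,a^p+p\int_a^1\frac{x^{p-1}}{f(x)}\,\dx .
\]
Substituting this back with $p=m+m'$, applying Tonelli once more, and using the symmetry identity
\[
\big\la\mu_0(\dm)\,\mu_0(\dm'),\ mm'(m+m')\,x^{m+m'}\big\ra=2\,\la\mu_0,m^2x^m\ra\,\la\mu_0,m'x^{m'}\ra=2\,f(x)\,g_0(x),
\]
the factor $1/f(x)$ cancels and one is left with
\[
\int_0^\varepsilon \la\mu_s,m\ra^2\,\ds=\varepsilon\,g_0(a)^2+2\int_a^1\frac{g_0(x)}{x}\,\dx .
\]

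It then remains only to see that both terms are finite. The first is $\varepsilon\,g_0(\ell_\varepsilon)^2=\varepsilon\,\la\mu_\varepsilon,m\ra^2<+\infty$ because $\varepsilon>0$. For the second,
\[
\int_a^1\frac{g_0(x)}{x}\,\dx\le\frac1a\int_0^1 g_0(x)\,\dx=\frac1a\,\Big\la\mu_0,\ \tfrac{m}{m+1}\Big\ra\le\frac1a\,\la\mu_0,m\wedge1\ra<+\infty
\]
by \eqref{condition}, since $\tfrac{m}{m+1}\le m\wedge 1$. This proves the lemma. The one genuinely delicate point is the change of variables together with the integration by parts: one must justify that $s\mapsto\ell_s$ is absolutely continuous with $\ell_s=f^{-1}(1/s)$ and, above all, that the boundary term at $x=1$ vanishes — which is exactly where the assumption $K=+\infty$ (equivalently $\Tgel=0$) is used. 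Everything else is routine Fubini--Tonelli bookkeeping, and the cancellation of $f(x)$ against the second-moment factor reduces the whole estimate to the elementary integrability of $g_0$ near $x=1$, guaranteed by \eqref{condition}.
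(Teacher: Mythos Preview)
Your proof is correct and arrives at exactly the same estimate as the paper, namely
\[
\int_0^\varepsilon M_s^2\,\ds \;\le\; \varepsilon\,g_0(\ell_\varepsilon)^2 + \frac{2}{\ell_\varepsilon}\,\la\mu_0,m\wedge 1\ra,
\]
using the same key ingredient, the relation $\ell_s\,g_0'(\ell_s)=1/s$ to convert the time integral into $2\int_{\ell_\varepsilon}^1 g_0(x)/x\,\dx$. The only difference is organizational: the paper integrates by parts directly in $t$ (three lines, yielding an inequality after dropping the boundary term at $\delta$), whereas you expand $g_0(\ell_s)^2$ as a double $\mu_0$-integral, Fubini, change variables $x=\ell_s$, integrate by parts in $x$, and Fubini back to obtain the exact identity $\int_0^\varepsilon M_s^2\,\ds=\varepsilon g_0(\ell_\varepsilon)^2+2\int_{\ell_\varepsilon}^1 g_0(x)/x\,\dx$; this is a longer but equivalent derivation of the same formula.
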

\begin{proof}
If $M_0<+\infty$ then there is nothing to prove, since $(M_t)_{t\geq 0}$ is monotone non-increasing,
so let us consider the case $M_0=+\infty$ and thus $\Tgel=0$.
Since $M_t=g_0(\ell_t)$ is bounded and continuous for $t\in[\delta,\varepsilon]$ for all $\delta\in(0,\varepsilon)$,
we have by \eqref{ell} and \eqref{g_0}
\[
\begin{split}
\int_\delta^\varepsilon M_t^2\, \mathrm{d}t & =\int_\delta^\varepsilon g_0^2(\ell_t)\, \mathrm{d}t =
\varepsilon g_0^2(\ell_\varepsilon) -\delta g_0^2(\ell_\delta) - \int_\delta^\varepsilon 2tg_0(\ell_t)g_0'(\ell_t) \, \mathrm{d} \ell_t \\
& \leq \varepsilon g_0^2(\ell_\varepsilon) - \int_\delta^\varepsilon 2\, g_0(\ell_t)\, \frac{\mathrm{d} \ell_t}{\ell_t} =
\varepsilon g_0(\ell_\varepsilon) + 2\int_{\ell_\varepsilon}^{\ell_\delta} g_0(y)\, \frac{\mathrm{d} y}y \\
& \leq \varepsilon g_0(\ell_\varepsilon) +\frac{2}{\ell_\varepsilon} \, \la \mu_0, \frac m{1+m}\ra
 \leq \varepsilon g_0(\ell_\varepsilon) +\frac{2}{\ell_\varepsilon} \, \la \mu_0, m\wedge 1\ra.
\end{split}
\]
Letting $\delta\downarrow 0$, by \eqref{condition} we obtain the desired result.
\end{proof}

We now finish the proof of existence of a solution by showing that $(\mu_t)_{t\geq 0}$
indeed solves \eqref{smolu2}. By choosing $x=e^{-y}$, $y\geq 0$, in \eqref{PDE}, we find
an equality between Laplace transforms.  Since the Laplace transform is one-to-one,
then we obtain \eqref{smolu2}.
\begin{rem}
{\rm In the proof of uniqueness, we may only require that $\la \mu_0, m y^m \ra < \pinf$ for every $y \in [0,1)$. However, the same kind of computation as in Lemma \ref{gutt} shows that if this the case, but $\la \mu_0, m \wedge 1 \ra = + \infty$, then $\int_0^t M_s^2 \ds = + \infty$ for all $t > 0$, in contradiction with Definition \ref{defsolsmolu} of a solution.}
\end{rem}

\section{Flory's equation}

We will now consider the modified version of Smoluchowski's equation,
also known as {\it Flory's equation}, with a multiplicative kernel.
\begin{definition}\label{def2.1}
Let $\mu_0\in\Mcc$.
We say that a family $(\mu_t)_{t\geq 0}\subset \Mcc$ solves Flory's equation \eqref{smolu2} if
\begin{itemize}
\item for every $t>0$, $\int_0^t \la \mu_s(\dm), m\ra^2 \ds<  \pinf$,
\item for all $\phi \in \Cc$ and $t>0$
\begin{equation}\label{flory}
\begin{split}
\la \mu_t , \phi \ra & = \la \mu_0 , \phi \ra + \frac12 \int_0^t \la \mu_s(\dm) \mu_s(\dm'), mm' \Dphi \ra \ds \\
 & \quad - \int_0^t \la \mu_s, \phi \ra \la \mu_0(\dm) - \mu_s(\dm), m \ra \ds,
\end{split}
\end{equation}
\item if $\la \mu_0 , m^2 \ra < \pinf$, then $t \mapsto \la \mu_t , m^2 \ra$ is bounded in a right neighbor\-hood of 0.
\end{itemize}
\end{definition}
In equation \eqref{flory}, the mass that vanishes in the gel interacts with the other particles. It is a
modified Smoluchowski's equation, where a term has been added, representing
the interaction of the particles of mass $m$ with the gel, whose mass is
\[
\la \mu_0 - \mu_s , m \ra
\]
i.e. precisely the missing mass of the system. Notice that in this case the equation
makes sense only if $\la \mu_0 , m \ra<+\infty$.

The mass is expected to decrease faster in this case than for \eqref{smolu2}. This is actually true, as we can see in the following result.
\begin{theorem}\label{thflory}
Let $\mu_0\in\Mcc$ a non-null measure such that
$\la \mu_0 , m \ra <+\infty$,  and set
\[
M_0:=\la \mu_0 , m \ra \in (0,\pinf), \qquad
K := \la \mu_0 , m^2 \ra \in (0,\pinf].
\]
Let $\Tgel := 1/K \in [0,+\infty)$.
Then Flory's equation \eqref{flory}
has a unique solution $(\mu_t)_{t\geq 0}$ on $\R^+$. It has the following properties.
\begin{enumerate}
\item We have $M_t=g_0(l_t)$, where $l_t=1$ for $t\leq  \Tgel$ and, for $t> \Tgel$, $l_t$
is uniquely defined by
\[
l_t=e^{-t(M_0-g_0(l_t))}, \qquad l_t\in[0,1).
\]
Therefore $t\mapsto M_t$ is continuous on $[0,+\infty)$, constant on $[0,\Tgel]$, strictly decreasing on $[\Tgel,\pinf)$ and analytic on $\R^+ \backslash \{ \Tgel \}$.
\item The function $\phi_t(x) = xe^{t(M_0-g_0(x))}$ has a right inverse $h_t \, : \, [0,1] \to [0,l_t]$. The generating function $g_t$ of $(\mu_t)_{t\geq 0}$ is given for $t \geq 0$ by
\[
g_t(x)=g_0(h_t(x)).
\]
\item Let $m_0 = \inf \mathrm{supp}\ \mu_0\geq 0$. Then, when $t \to \pinf$,
\[
M_t e^{m_0t} \to m_0 \mu_0(\{m_0\})
\]
and for every $\eps > 0$
\[
M_t e^{(m_0+\eps)t} \to \pinf.
\]
\item The second moment $\la m^2,c_t \ra$ is finite on $\R^+ \backslash \{ \Tgel \}$ and infinite at $\Tgel$.
\end{enumerate}
\end{theorem}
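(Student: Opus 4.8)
The plan is to carry the method of characteristics of Section~2 over to Flory's equation; thanks to the presence of the gel, the argument is in fact more straightforward here than for \eqref{smolu2}. Let $(\mu_t)_{t\geq 0}$ solve \eqref{flory}, put $M_t=\la\mu_t,m\ra$ and, as in \eqref{g}, $g_t(x)=\la\mu_t,mx^m\ra$. A computation from \eqref{flory} analogous to the one leading to \eqref{PDE} shows that $g$ solves
\[
g_t(x)=g_0(x)+\int_0^t x\,\bigl(g_s(x)-M_0\bigr)\,\dplus{g_s}{x}(x)\,\ds,\qquad x\in(0,1),\ t\geq 0,
\]
i.e. formally Smoluchowski's PDE \eqref{PDE} but with the \emph{unknown} current mass $M_t$ replaced by the \emph{known} conserved total mass $M_0=\la\mu_0,m\ra<\pinf$, the combined effect of the coagulation term and the gel--interaction term being governed by $M_0$ rather than by $M_t$. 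The upshot is that the associated family of characteristics $\phi_t(x)=x\,e^{t(M_0-g_0(x))}$, which satisfies $\dplus{\phi_t}{t}=\phi_t(x)\bigl(M_0-g_0(x)\bigr)$, is now \emph{entirely explicit}, depending only on $g_0$. This removes the central difficulty of Section~2 --- there $\phi_t$ depended on the a priori unknown trajectory $(M_s)_{s\le t}$ --- and in particular the analogue of the delicate Lemma~\ref{lemlt2} becomes unnecessary.

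I would first record the elementary geometry of $\phi_t$: $\phi_t(0)=0$, $\phi_t(1)=e^{t(M_0-g_0(1))}=1$ since $g_0(1)=M_0$, and $\phi_t'(x)=e^{t(M_0-g_0(x))}\bigl(1-t\,xg_0'(x)\bigr)$, where $x\mapsto xg_0'(x)$ increases strictly from $0$ to $K=\la\mu_0,m^2\ra$. Hence for $t\le\Tgel=1/K$ the map $\phi_t$ is strictly increasing on $[0,1]$, so I set $l_t:=1$; for $t>\Tgel$ it has a unique interior maximum at the point $m_t$ with $m_tg_0'(m_t)=1/t$, and $\phi_t(m_t)>\phi_t(1)=1$, so there is exactly one $l_t\in(0,m_t)$ with $\phi_t(l_t)=1$, equivalently $l_t=e^{-t(M_0-g_0(l_t))}\in(0,1)$ (uniqueness by strict monotonicity of $\phi_t$ on $[0,m_t]$). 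The key point --- in contrast with the Smoluchowski case, where $l_t=m_t$ --- is that $l_t<m_t$, hence $\phi_t'(l_t)>0$, for every $t$.

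Next, arguing exactly as in Lemma~\ref{ti} --- a local-in-time linear ODE for $u_t:=g_t(\phi_t(x))-g_0(x)$ showing $u_t\equiv 0$ on a short interval, then analytic continuation in $x\in(0,l_t)$ and a maximality ($T=\sup\{\dots\}$) argument to reach all $t\ge 0$ --- one gets $g_t(\phi_t(x))=g_0(x)$ for all $t\ge 0$, $x\in[0,l_t]$. Since $\phi_t\colon[0,l_t]\to[0,1]$ is then an increasing bijection with inverse $h_t$, this gives $g_t=g_0\circ h_t$; in particular $M_t=g_t(1)=g_0(l_t)$, and, the Laplace transform being injective, $(\mu_t)_{t\ge 0}$ is uniquely determined by $\mu_0$ --- this proves uniqueness and item~(2). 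Moreover $l_t\equiv 1$, $M_t\equiv M_0$ on $[0,\Tgel]$, and for $t>\Tgel$ the analytic implicit function theorem applied to $(t,x)\mapsto\phi_t(x)-1$ at $(t,l_t)$ --- where the $x$-derivative is $\phi_t'(l_t)\ne 0$ --- shows $t\mapsto l_t$ is analytic and strictly decreasing, hence so is $M_t=g_0(l_t)$ (as $g_0$ is strictly increasing), with $l_t\to 1$, $M_t\to M_0$ as $t\downarrow\Tgel$ (if $l_{t_n}\to\ell<1$ then $\phi_{\Tgel}(\ell)=1$, contradicting that $\phi_{\Tgel}$ is strictly increasing with $\phi_{\Tgel}(1)=1$); this is item~(1). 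Existence is obtained exactly as in Section~\ref{exist}: define $g_t:=g_0\circ h_t$ straight from $\mu_0$, check it satisfies the PDE together with the analogues of (a1)--(a4), produce $\mu_t\in\Mf$ with generating function $g_t$ via the complete monotonicity / Bernstein argument of Proposition~\ref{shir}, and verify $\int_0^\varepsilon M_s^2\,\ds<\pinf$ as in Lemma~\ref{gutt}.

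Finally, differentiating $g_t(\phi_t(x))=g_0(x)$ in $x$ gives $g_t'(\phi_t(x))\,\phi_t'(x)=g_0'(x)$, and letting $x\to l_t^-$ yields item~(4): for $t>\Tgel$, since $\phi_t'(l_t)>0$ and $g_0'(l_t)\in(0,\pinf)$, we get $\la\mu_t,m^2\ra=g_t'(1)<\pinf$ --- so, unlike for \eqref{smolu2}, the second moment becomes finite again after gelation --- while at $t=\Tgel$ one has $l_{\Tgel}=1$ and $\phi_{\Tgel}'(1)=1-\Tgel K=0$, forcing $g_{\Tgel}'(1)=\pinf$ (and for $t<\Tgel$, $g_t'(1)=K/(1-tK)<\pinf$). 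For item~(3), one checks $l_t\to 0$ as $t\to\pinf$ (otherwise $M_0-g_0(l_t)$ stays bounded below and $\phi_t(l_t)=l_te^{t(M_0-g_0(l_t))}\to\pinf$ contradicts $\phi_t(l_t)=1$), and then $M_t=g_0(l_t)=\la\mu_0,ml_t^m\ra$ is controlled, as $l_t\to 0$, by the bottom $m_0$ of $\mathrm{supp}\,\mu_0$ in the spirit of Lemma~\ref{lemprob} (namely $g_0(l_t)\sim m_0\mu_0(\{m_0\})\,l_t^{m_0}$ when $\mu_0(\{m_0\})>0$); inserting this back into $l_t=e^{-t(M_0-g_0(l_t))}$ gives the exact exponential rate of decay of $M_t$ and the divergence of $M_te^{(m_0+\eps)t}$. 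I expect the only genuine work to be the careful transcription of these Section~2 arguments --- chiefly the analytic-continuation step for $g_t\circ\phi_t=g_0$ and the Bernstein-type existence argument --- together with the bookkeeping for the $l_t\to 0$ asymptotics; the conceptual obstacle of Section~2, the implicit dependence of the characteristics on the unknown mass, is simply absent here.
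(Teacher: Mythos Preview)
Your proposal is correct and follows essentially the same approach as the paper's own proof: the key observation that Flory's PDE involves the known constant $M_0$ rather than the unknown $M_t$, the explicit characteristics $\phi_t(x)=xe^{t(M_0-g_0(x))}$ with the properties you list (these are exactly the paper's (d1)--(d6)), the crucial fact $l_t<m_t$ so $\phi_t'(l_t)\ne 0$ which renders the analogue of Lemma~\ref{lemlt2} unnecessary, and the derivations of items (1)--(4) all match the paper almost step for step. The paper's treatment of the asymptotics in item~(3) is slightly more detailed (it first establishes $\log l_t\sim -M_0 t$ and then splits into the cases $m_0>0$ and $m_0=0$), but your outline points in the same direction.
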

\begin{rem}{\rm
\begin{itemize}
	\item Norris \cite[Thm 2.8]{NorrisCC} has a proof of global uniqueness of Flory's equation \eqref{flory} for slightly less general initial
conditions ($\mu_0$ such that $\la\mu_0,1+m\ra<+\infty$), but for a much more general model.
  \item When $m_0 > 0$, it was already observed (Proposition 5.3 in \cite{EscobedoGaMC}) that the mass decays (at least) exponentially fast (see also \cite{ErnstKoG,StellZiff,ZiffKoP}).
\end{itemize}
}
\end{rem}
\begin{proof}[Proof of Theorem \ref{thflory}] The proof is very similar to (and actually easier than) that of
Theorem \ref{thsmolu}.
\begin{enumerate}
\item Arguing as in the proof of Lemma \ref{lemM}, we obtain easily that $(M_t)_{t\geq 0}$ is
monotone non-increasing and right-continuous. As in Lemma \ref{lemmt}, if
$t \mapsto \la \mu_t , m^2 \ra$ is bounded on some interval $[0,T_0]$, then $M_t = M_0$ for $t \in [0,T_0]$ and
therefore $(\mu_t)_{t\geq 0}$ is a solution of Smoluchowski's equation \eqref{smolu2} on $[0,T_0]$.
\item Consider initial concentrations $\mu_0$ as in the statement, a solution $(\mu_t)_{t\geq 0}$ to Flory's equation and $g_t(x)$,
$x\in[0,1]$, generating function of $m\, \mu_t(dm)$. Then $g_t$ solves the PDE
\begin{equation}
\d{g_t}{t}=x(g_t-M_0)\d{g_t}{x}, \qquad \forall \, t>0, \, x\in[0,1],
\label{PDE2}
\end{equation}
the same as the one obtained for Smoluchowski's equation before gelation. It may be solved using the method of characteristics. Indeed, the mapping
\begin{equation}\label{phi2}
\phi_t(x)=x e^{t(M_0-g_0(x))}=x+\int_0^t (M_0-g_0(x))\, \phi_s(x) \ds,
\end{equation}
has the following properties
\begin{itemize}
\item[(d1)] $\phi_t(0)=0$, $\phi_t(1)=1$.
\item[(d2)] For all $t\geq 0$, $\phi_t'(x)=e^{t(M_0-g_0(x))}(1-txg'_0(x))$.
\item[(d3)] For $t \leq \Tgel$, $\phi_t(\cdot)$ is increasing; therefore,
$\phi_t(x)\in[0,1]$ for all $x\in[0,1]$ and $\phi_t(x)=1$ if and only if $x=1$
\item[(d4)] For $t  > \Tgel$, $\phi_t(\cdot)$ is increasing on $[0,m_t]$ and decreasing on $[m_t,1]$, where $m_t$
is the unique $x\in(0,1)$ such that $\phi'_t(x)=0$, i.e. such that $txg_0'(x)=1$.
\item[(d5)] For $t>\Tgel$, $\phi_t(m_t)>1$, since $\phi_t(1)=1$ and $\phi_t'(1)<0$.
Therefore there is a unique $l_t\in(0,m_t)$ such that $\phi_t(l_t)=1$.
\item[(d6)] For $t > \Tgel$, $\phi_t'(l_t) \neq 0$, since $l_t<m_t$.
\end{itemize}
\begin{figure}[htb]
\begin{tabular}{cc}
\includegraphics[width=7.1cm]{phitpregel.png}& \includegraphics[width=7.1cm]{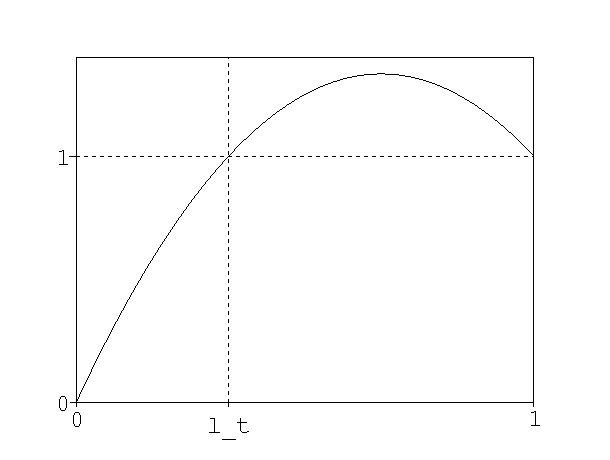}
\end{tabular}
\caption{$\phi_t$ before and after gelation.}
\end{figure}
Setting $l_t:=1$ for $t\leq \Tgel$,
$\phi_t$ is thus a continuous bijection from $[0,l_t]$ to $[0,1]$, with continuous inverse function $h_t:[0,1]\mapsto
[0,l_t]$. By using \eqref{PDE2} and
\eqref{phi2} and arguing as in part (i) and (ii) of the proof of Lemma \ref{ti}, we can see that the function $u_t(x):=g_t(\phi_t(x))-g_0(x)$
satisfies $u_t(x)=u_0(x)=0$ for all $t\geq 0$ and $x\in[0,l_t]$. Therefore
the only solution of the PDE \eqref{PDE2} is given by
\begin{equation}
g_t(x)=g_0(h_t(x)), \qquad t\geq 0, \, x\in[0,1].
\label{gt2}
\end{equation}
Flory's equation has thus a unique solution on $\R^+$, and its generating function is $g_t$.
\item We have seen in (d5) above that, for $t>\Tgel$, there is a unique
$l_t\in[0,1)$ such that $\phi_t(l_t)=1$. The relation $\phi_t(l_t)=1$ with $l_t\in[0,1)$ is equivalent
to $l_t=e^{-t(M_0-g_0(l_t))}$ with $l_t\in[0,1)$. This relation implies that $t\mapsto l_t$ is
analytic for $t>\Tgel$. A differentiation shows that
\[
\frac{dl_t}{dt} = -\frac{(M_0-g_0(l_t))l_t}{1-tg_0'(l_t)l_t}<0, \qquad t>\Tgel,
\]
since $g_0'(l_t)l_t<g_0'(m_t)m_t=1/t$ and $g_0(l_t)<g_0(1)=M_0$. Let $\ell$ be the
limit of $l_t$ as $t\downarrow \Tgel$: then we obtain $\ell=e^{-\Tgel(M_0-g_0(\ell))}$,
i.e. $\phi_{\Tgel}(\ell)=1$. By (d3) above, this is equivalent to $\ell=1$.
\item Since $M_t=g_t(1)=g_0(h_t(1))=g_0(l_t)$,
the properties of $t\mapsto M_t=g_0(l_t)$ follow from those of $t\mapsto l_t$.
Recall now that $\phi_t(l_t)=1$, that is
\begin{equation}
\log(l_t)=t(g_0(l_t)-1).
\label{lnlt}
\end{equation}
If the limit $l$ of $l_t$ as $t\to+\infty$ were nonzero, then passing to the limit in this equality would give $\log(l)=- \infty$. So $l=0$ and
\begin{equation}
\log l_t \sim -t.
\label{equiv}
\end{equation}
\begin{itemize}
\item Assume $m > 0$. Now, obviously $g_0(x) \leq x^m$, so
\[
\log(tg_0(l_t))=\log l_t + \log g_0(l_t) \leq \log t + m \log l_t \to - \infty.
\]
Hence $t g_0(l_t) \to 0$ and \eqref{lnlt} yields $\log l_t + t \to 0$. Hence
$l_t^m \sim e^{-mt}$.
Finally
\[
\lim_{t \to \pinf} M_t e^{mt} = \lim_{t \to \pinf} \frac{g_0(l_t)}{l_t^m} = m \mu_0(\{m\})
\]
since by dominated convergence, $g_0(x) x^{-m} \to m \mu_0(\{m\})$ when $x \to 0$. Now, by monotone convergence, if $m' > m$, then $g_0(x)x^{-m'} \to \pinf$ when $x$ tends to 0, whence
\[
\lim_{t \to \pinf} M_t e^{m't} = \lim_{t \to \pinf} \frac{g_0(l_t)}{l_t^{m'}} = \pinf.
\]
\item Assume now $m = 0$ and let $\eps > 0$. By monotone convergence
$g_0(x)x^{-\eps} \to \pinf$ as $x\downarrow 0$,
so using \eqref{equiv} we see that
$g(l_t)e^{-\eps t} \to \pinf$ as $t\uparrow + \infty$, which is the desired result.
\end{itemize}
\item Finally, \eqref{gt2} gives for $x <1$ and $t > \Tgel$
\[
g_t'(x)=g_0'(h_t(x))\, h_t'(x) = \frac{g_0'(h_t(x))}{\phi_t'(h_t(x))}.
\]
When $x \uparrow 1$, $h_t(x) \uparrow l_t < 1$, and $\phi_t'(h_t(x)) \to \phi_t'(l_t) \neq 0$
by (d6) above. So $\la \mu_t , m^2 \ra = g_t'(1) < \pinf$.
\item Existence of a solution of \eqref{flory} follows arguing as in section \ref{exist}.
\end{enumerate}
\end{proof}

\begin{cor}\label{leq}
Let $\mu_0\in\Mcc$ such that $\la\mu_0,m\ra<+\infty$ and let $(\mu_t^S)_{t\geq 0}$ and
$(\mu_t^F)_{t\geq 0}$ the solutions of \eqref{smolu2}, respectively, \eqref{flory}. Then
\begin{itemize}
\item $\mu_t^S\equiv \mu_t^F$ for all $t\leq \Tgel:=1/\la\mu_0,m^2\ra$;
\item $\la\mu_t^F,m\ra
<\la\mu_t^S,m\ra$ for all $t> \Tgel$.
\end{itemize}
\end{cor}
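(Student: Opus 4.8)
The plan is to read off both claims from the explicit representations in Proposition \ref{uniuni} and Theorem \ref{thflory}, exploiting that each solution is determined by the single function $g_0$. Write $\ell_t$ for the Smoluchowski parameter of Proposition \ref{uniuni}(1), and $l_t$, $m_t$ for the Flory quantities of Theorem \ref{thflory}(1) and property (d4).

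For $t\le\Tgel$: by Proposition \ref{uniuni}(1) the Smoluchowski solution has $M^S_s=M_0$ for $s\le t$, hence $\alpha_t=\exp\!\big(\int_0^t M^S_s\,\ds\big)=e^{tM_0}$, so the characteristic map \eqref{phit} reduces to $\phi_t(x)=xe^{t(M_0-g_0(x))}$, which is exactly the map \eqref{phi2} used for the Flory solution. For $t\le\Tgel$ this map is an increasing bijection of $[0,1]$ onto itself, so it has one and the same right inverse $h_t$ in both cases, and \eqref{uuu} together with \eqref{gt2} gives $g^S_t=g_0\circ h_t=g^F_t$ on $[0,1]$. As $g_t$ is the generating function of $m\,\mu_t(\dm)$ and thus determines $\mu_t$, we get $\mu^S_t=\mu^F_t$ for $t\le\Tgel$. (Alternatively: by Theorem \ref{thflory}(1) the mass is conserved for the Flory solution on $[0,\Tgel]$, so the last integral in \eqref{flory} vanishes there and $(\mu^F_t)_{t\le\Tgel}$ solves \eqref{smolu2}; uniqueness in Proposition \ref{uniuni} closes the argument.)

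For $t>\Tgel$: Proposition \ref{uniuni}(1) gives $\la\mu^S_t,m\ra=g_0(\ell_t)$ with $\ell_t\in(0,1)$ the unique root of $\ell_t g_0'(\ell_t)=1/t$, while Theorem \ref{thflory}(1) gives $\la\mu^F_t,m\ra=g_0(l_t)$, where $l_t\in(0,1)$ satisfies $\phi_t(l_t)=1$ for $\phi_t(x)=xe^{t(M_0-g_0(x))}$. By properties (d4) and (d5), $l_t$ lies in the \emph{open} interval $(0,m_t)$, where $m_t$ is the unique point of $(0,1)$ with $\phi_t'(m_t)=0$, i.e. with $m_t g_0'(m_t)=1/t$. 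Since $x\mapsto xg_0'(x)$ is strictly increasing on $[0,1]$, the equation $xg_0'(x)=1/t$ has at most one solution there, so $m_t=\ell_t$. Hence $l_t<m_t=\ell_t$, and as $g_0$ is strictly increasing on $[0,1]$,
\[
\la\mu^F_t,m\ra=g_0(l_t)<g_0(\ell_t)=\la\mu^S_t,m\ra,
\]
which is the second assertion.

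The argument is essentially bookkeeping, so there is no genuine obstacle; the one point to notice is the identification $m_t=\ell_t$ of the Flory critical point with the Smoluchowski parameter, which rests on the strict monotonicity of $x\mapsto xg_0'(x)$. Strictness of the final inequality is then automatic, since $l_t$ sits strictly inside $(0,m_t)$ and $g_0$ is strictly increasing.
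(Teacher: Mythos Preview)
Your proof is correct and follows essentially the same route as the paper's: for $t\le\Tgel$ you use mass conservation of the Flory solution (your alternative argument is exactly the paper's), and for $t>\Tgel$ you identify the Flory critical point $m_t$ with the Smoluchowski parameter $\ell_t$ via the equation $xg_0'(x)=1/t$, then use $l_t<m_t=\ell_t$ and the strict monotonicity of $g_0$. Your primary argument for the first part, matching the characteristic maps and hence the generating functions directly, is a mild variant that the paper does not spell out but is equally valid.
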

\begin{proof}
For all $t\leq \Tgel$, $\la\mu_t^F,m\ra=\la\mu_0^F,m\ra$ and therefore $\mu_t^F$ solves \eqref{smolu2},
so that by uniqueness of Smoluchowski's equation we have that $\mu_t^S= \mu_t^F$. For $t> \Tgel$
we have that $\la\mu_t^F,m\ra=g_0(l_t)$ while $\la\mu_t^S,m\ra=g_0(\ell_t)$, where $l_t$ and $\ell_t$
are defined respectively by
\[
l_t=e^{-t(M_0-g_0(l_t))}, \quad l_t\in[0,1)
\]
and
\[
\ell_tg_0'(\ell_t)=\frac1t.
\]
In points (d4) and (d5) of the proof of Theorem \ref{thflory}, we have shown that $l_t<m_t$ where
$tm_tg_0'(m_t)=1$, so that $m_t=\ell_t < l_t$. Hence $\la \mu_t^F, m \ra = g_0(\ell_t) < g_0(l_t) = \la \mu_t^S, m \ra$.
\end{proof}

As anticipated, the mass decreases faster in Flory's case than for Smoluchowski's equation. In particular,  in
Flory's case $\la \mu_t , m^2 \ra$ becomes finite immediately after gelation, the mass remaining however continuous (we can think that the big particles, which have the biggest influence on this second moment, disappear into the gel). Moreover, if
$\inf \mathrm{supp}\ \mu_0>0$ then the mass decays exponentially fast, which is to be compared with the slow decrease in $1/t$ in Smoluchowski's equation.

\begin{rem}
{\rm The mass in Flory's equation may decrease slower if $\inf \mathrm{supp}\ \mu_0$ $=0$. For instance, if $\mu_0(\dm)=e^{-m} \dm$, then $M_t \sim t^{-2}$.}
\end{rem}

\section{The model with limited aggregation}

We now turn to our main interest, namely Equation \eqref{smolubras0}. We apply the same techniques as above in a slightly more
complicated setting. After giving all details in Smoluchowski's case, we will give a shorter proof and focus on the differences with the proof of Theorem \ref{thsmolu}. As above, we can transform the system \eqref{smolubras0} into a non-local PDE problem, which we are able to solve, thus obtaining existence and uniqueness to \eqref{smolubras0}. More
precisely, we consider the following system.
\begin{definition}\label{def3.1}
Let $c_0(a,m)\geq 0$, $a\in\N$, $m\in\N^*$. We say that a family $(c_t(a,m))$, $t\geq 0$, $a\in\N$, $m\in\N^*$, is a solution of Smoluchowski's equation \eqref{smolubras} if
\begin{itemize}
\item for every $t>0$, $\int_0^t \la c_s, a\ra^2 \ds<  \pinf$,
\item for all $a\in\N$, $m\in\N^*$ and $t>0$,
\begin{equation} \label{smolubras}
\begin{split}
c_t(a,m)& = \ c_0(a,m)+ \\
& \quad + \int_0^t\frac12 \sum_{a'=1}^{a+1} \sum_{m'=1}^{m-1} a'(a+2-a') c_s(a',m')c_s(a+2-a',m-m') \ds \\
& \quad - \int_0^t \sum_{a' \geq 1} \sum_{m'\geq 1} aa'c_s(a,m) c_s(a',m') \ds,
\end{split}
\end{equation}
\item if $\la c_0 , a^2 \ra < \pinf$, then $t \mapsto \la c_t , a^2 \ra$ is bounded in a right neighborhood of 0.
\end{itemize}
\end{definition}
Because of the interpretation of $a$ as a variable counting the number of arms a particle possesses, it is more
natural to state \eqref{smolubras} in the discrete setting, as in \cite{BertoinTSS}. In particular, since at each coagulation
two arms are removed from the system, a non-integer initial number of arms would lead to an ill-defined dynamics.
One could however with no difficulty consider an initial distribution of masses on $(0,+\infty)$.

It is easy to see that $(c_t)$ is a solution to this equation if and only if the function
\begin{equation}
k_t(x,y):= \sum_{a=1}^{+\infty} \sum_{m=1}^{+\infty} a\, c_t(a,m) \, x^{a-1}\, y^m,
\label{kt}
\end{equation}
defined for $t\geq 0$, $y \in [0,1]$ and $x \in [0,1)$, satisfies
\begin{equation}\label{PDE3}
\left \{
\begin{array}{l}
\displaystyle k_t(x,y)= k_0(x,y)+\int_0^t\left[
\left( k_s(x,y) - xA_s \right) \d{k_s}{x}(x,y) - A_sk_s(x,y)\right] \ds, \\
A_t:=k_t(1,1)=\la c_t,a\ra.
\end{array}
\right.
\end{equation}
We may solve this PDE with the same techniques as above and obtain the following result.
\begin{theorem}\label{thsmolubras}
Consider initial concentrations $c_0(a,m)\geq 0$, $a\in\N$, $m\in\N^*$ such that $\la c_0, 1 \ra <+\infty$,
$A_0:=\la c_0, a \ra \in(0,+\infty]$
and with $K:=\la c_0 , a(a-1) \ra \in [0,\pinf]$. Then $K=+\infty$ whenever $A_0=+\infty$. Let
\begin{equation}
\Tgel= \begin{cases}
\frac{1}{K-A_0} \quad & {\rm if} \ A_0<K<+\infty, \\ 0 \quad & {\rm if} \ K=+\infty,
\\ +\infty \quad & {\rm if} \ K\leq A_0<+\infty.
\end{cases}
\label{Tgelbras}
\end{equation}
Then equation \eqref{smolubras} has a unique solution defined on $\R^+$.
When $\Tgel < \pinf$, this solution enjoys the following properties.
\begin{enumerate}
\item The number of arms $A_t := \la c_t , a \ra$ is continuous, strictly decreasing, and for all
$t>0$
\begin{equation}\label{to0}
A_t \leq \frac{A_0}{1+tA_0}\quad {\rm if} \ A_0<+\infty, \qquad A_t \leq \frac1t \quad {\rm if} \ A_0=+\infty.
\end{equation}
If we set
\[
\a_t = \exp\left(\int_0^t A_s \ds\right),
\]
then $\a_t$ is given by
\[
\a_t = 1+A_0t \quad \mathrm{for}\ t < \Tgel
\]
and for $t\geq \Tgel$
\begin{equation}
\a_t = \begin{cases}\G^{-1}(1+A_0\Tgel+t-\Tgel)\quad & {\rm if} \ A_0<+\infty, \\
\G^{-1}(1+t) \quad & {\rm if} \ A_0=+\infty,\end{cases}
\label{alphat2}
\end{equation}
where
\[
\G(x) =1+A_0\Tgel+\int_{1+A_0\Tgel}^x \frac{dr}{k_0(H(1/r))}, \quad x\geq1+A_0\Tgel,
\]
and $H:[G(0),G(1))\mapsto[0,1)$ is the right inverse of the increasing function
\begin{equation}\label{G}
G:[0,1)\mapsto[G(0),G(1)), \qquad
G(x) := x - \frac{k_0(x,1)}{k_0'(x,1)}, \quad x\in[0,1),
\end{equation}
with $G(0):=G(0^+)\leq 0$, and
\[
0 < G(1):=G(1^-)=\begin{cases} \displaystyle 1-\frac{A_0}K\quad & {\rm if} \ A_0<+\infty \\ 1 \quad & {\rm if} \ A_0=+\infty. \end{cases}
\]
\item Let $k_0$ be defined as in \eqref{kt},
and
\begin{equation} \label{Atbt}
A_t = \la c_t , a \ra, \qquad \a_t = \exp\left(\int_0^t A_s \ds\right),
\qquad \b_t = \int_0^t \frac{1}{\a_s^2} \ds.
\end{equation}
Consider
\[
\phi_t(x,y) := \a_t(x - \b_t k_0(x,y)), \qquad t\geq 0, \, x,y\in[0,1].
\]
Then
\begin{itemize}
\item $\phi_t(\cdot,1)$ attains its maximum at a point $\ell_t$ such that $\phi_t(\ell_t,1)=1$. For $t \leq \Tgel$,
$\ell_t =1$, and for $t > \Tgel$, $0 < \ell_t < 1$ and
\begin{equation}
\d{\phi_t}{x}(\ell_t,1) = 0.
\label{phitprimlt2}
\end{equation}
In particular, for $t > \Tgel$, $\ell_t$ is given by
\begin{equation}\label{phitprimlt22}
\ell_t = H \left ( \frac{1}{\a_t} \right ),
\end{equation}
where $H$ is the right inverse of the function $G$ defined above.
\item For every $y \in [0,1]$, $\phi_t(\cdot,y)$ has a right inverse $h_t(\cdot,y):[0,1]\mapsto[0,1]$.
\end{itemize}
\item The generating function $k_t$ defined by \eqref{kt} is given by
\begin{equation}
k_t(x,y) = \frac{1}{\a_t} k_0(h_t(x,y),y)
\label{ktxy}
\end{equation}
for $y \in [0,1]$, $x \in [0,1]$. In particular, for $t>0$
\begin{equation}\label{k0lt}
\a_t A_t = \a_t k_t(1,1) = k_0(\ell_t,1), \qquad
A_t =\frac{k_0(\ell_t,1)}{1+\int_0^tk_0(\ell_s,1) \ds}.
\end{equation}
\item The second moment $\la c_t, a^2 \ra$ is finite on $[0,\Tgel)$, infinite on $[\Tgel,+ \infty)$.
\end{enumerate}
\end{theorem}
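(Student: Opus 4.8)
The plan is to transport the three-step scheme of the proof of Theorem~\ref{thsmolu} into the two-variable, non-local setting of \eqref{PDE3}.

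\textbf{Reduction and preliminaries.} The equivalence between \eqref{smolubras} and \eqref{PDE3} is already recorded in the excerpt; as there, it is obtained by the standard approximation procedure (truncating $a$ and $m$, then passing to the limit). Arguing exactly as for Lemma~\ref{lemM}, with truncated test functions and Fatou's lemma, one shows that $A_t=\la c_t,a\ra$ is non-increasing, lower semicontinuous hence right-continuous, and finite for every $t>0$ (finiteness from $\int_0^tA_s^2\ds<\pinf$). Plugging $\phi(a)=a$ into \eqref{smolubras} and a change of summation variables gives that $\mathrm{d}^+A_t/\mathrm{d}t=-A_t^2$ whenever $t\mapsto\la c_t,a^2\ra$ is bounded, which (as in Lemma~\ref{lemmt}) forces $A_t=A_0/(1+A_0t)$, hence $\a_t=1+A_0t$ and $\b_t=t/(1+A_0t)$, on any such interval; this is the pre-gelation regime.

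\textbf{The characteristic identity.} With $\phi_t,\a_t,\b_t$ as in the statement one checks $\dplus{\phi_t}{t}(x,y)=A_t\phi_t(x,y)-\a_t^{-1}k_0(x,y)$ and $\phi_0(x,y)=x$. Fixing $(x,y)$ for which $\phi_s(x,y)$ stays in $(0,1)$ for $s$ small, the function $u_t:=\a_tk_t(\phi_t(x,y),y)-k_0(x,y)$ satisfies, by \eqref{PDE3}, a linear ODE $\dt u_t=\d{k_t}{x}(\phi_t(x,y),y)\,u_t$ with $u_0=0$, hence $u_t\equiv0$; analytic continuation in $x$ (for $y$ fixed) and a bootstrap in $t$, as in parts (i)--(ii) of the proof of Lemma~\ref{ti}, extend this to $\a_tk_t(\phi_t(x,y),y)=k_0(x,y)$ on the set where $\phi_t(\cdot,y)$ is a bijection onto $[0,1]$. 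Since $x\mapsto k_0'(x,1)$ is a power series with non-negative coefficients, $\d{\phi_t}{x}(x,1)=\a_t(1-\b_tk_0'(x,1))$ is non-increasing in $x$, so $\phi_t(\cdot,1)$ is concave, hence increasing up to a unique critical point $m_t$ (for $t>\Tgel$) and then decreasing. Differentiating the identity at $y=1$, $x\to1$, in the pre-gelation regime yields $\la c_t,a(a-1)\ra=K/[(1+A_0t)(1-t(K-A_0))]$, finite exactly on $[0,\Tgel)$; with the previous paragraph this gives $A_t=A_0/(1+A_0t)$ and $\ell_t=1$ on $[0,\Tgel]$. The case $A_0=\pinf$ has $\Tgel=0$, and this step is empty.

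\textbf{Post-gelation, uniqueness, and the moments.} Set $\ell_t=\inf\{x\ge0:\phi_t(x,1)=1\}$ (with $\inf\emptyset=1$); the crux is to show, for $t>\Tgel$, that $\ell_t<1$ and $\d{\phi_t}{x}(\ell_t,1)=0$. If $\ell_t=1$, the identity $\a_tk_t(\phi_t(x,1),1)=k_0(x,1)$ would hold on $[0,1]$, which is impossible since the right side is strictly increasing while the left side is strictly decreasing in a left neighbourhood of $1$ (there $\d{\phi_t}{x}(1,1)=\a_t(1-\b_tK)<0$, as $\b_{\Tgel}K=1$ by continuity and $\b$ is strictly increasing, and $k_t(\cdot,1)$ is increasing): this is the analogue of Lemma~\ref{ti}. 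For the second fact, as in Lemma~\ref{lemlt2}: the identity at $x=\ell_t$ reads $\a_tA_t=\a_tk_t(1,1)=k_0(\ell_t,1)$, so $\dplus{\phi_t}{t}(\ell_t,1)=A_t-\a_t^{-1}k_0(\ell_t,1)=0$, and since $(\ell_t)$ is continuous and non-increasing, the Stieltjes chain rule applied to $\phi_t(\ell_t,1)=1$ leaves $\d{\phi_s}{x}(\ell_s,1)\,\dl_s=0$; thus $\d{\phi_t}{x}(\ell_t,1)=0$ holds $\dl_t$-a.e., and the supremum argument of Lemma~\ref{lemlt2} promotes it to every $t>\Tgel$ (were $\d{\phi_t}{x}(\ell_t,1)>0$ at some $t$, $\ell$ would be constant and $\b$ strictly increasing on a preceding interval, giving $\d{\phi_r}{x}(\ell_r,1)=\a_r(1-\b_rk_0'(\ell_r,1))<0$, a contradiction). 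Solving $\d{\phi_t}{x}(\ell_t,1)=0$ and $\phi_t(\ell_t,1)=1$ simultaneously gives $\a_tG(\ell_t)=1$, i.e. $\ell_t=H(1/\a_t)$ with $G,H$ as in \eqref{G}, together with $\a_tA_t=k_0(\ell_t,1)$; hence $\a$ obeys the autonomous equation $\a_t'=A_t\a_t=k_0(H(1/\a_t),1)$ with $\a_{\Tgel}=1+A_0\Tgel$, which separates to $\G(\a_t)=1+A_0\Tgel+(t-\Tgel)$ with $\G$ strictly increasing, so has a unique solution. Consequently $\a_t$, $A_t$, $\ell_t$, $\b_t$, $\phi_t$, the right inverses $h_t(\cdot,y)$, and finally $k_t$ through \eqref{ktxy}, hence $(c_t)$, are uniquely determined by $c_0$ (for $A_0=\pinf$ one integrates from $0^+$, the assumption $\int_0^tA_s^2\ds<\pinf$ ensuring $k_0(\ell_s,1)$ is integrable near $0$, as in Lemma~\ref{gutt}). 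Part~(4) follows from differentiating \eqref{ktxy} at $x=1$: below $\Tgel$ the second moment is the finite quantity above, while for $t\ge\Tgel$, $\d{\phi_t}{x}(\ell_t,1)=0$ together with $k_0'(\ell_t,1)>0$ forces $\la c_t,a(a-1)\ra=\d{k_t}{x}(1,1)=\pinf$. Existence is obtained exactly as in Section~\ref{exist}: the $k_t$ just built solves \eqref{PDE3}, the integrability condition holds by the computation of Lemma~\ref{gutt} using $\la c_0,1\ra<\pinf$, and non-negativity of the coefficients $c_t(a,m)$ follows from a Bernstein/complete-monotonicity argument; translating back gives a solution of \eqref{smolubras}.

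\textbf{Main obstacle.} As in the one-variable case, the hard part is precisely the pair of facts isolated above — that for $t>\Tgel$ the profile $\phi_t(\cdot,1)$ reaches the value $1$ and attains its maximum there — because $\phi_t$ depends on the a priori unknown trajectory $(A_s)_{s\le t}$, so it cannot be computed directly; the only leverage is the monotonicity and continuity of $(\ell_t)$ pushed through the Stieltjes chain rule, now burdened with the extra bookkeeping of the damping factor $\a_t^{-1}$, of $\b_t$, and of the variable $y$. A secondary delicate point is that it is the single scalar identity $\a_tA_t=k_0(\ell_t,1)$ that closes the otherwise underdetermined system into the autonomous ODE for $\a_t$; one must also check the monotonicity of $x\mapsto k_0'(x,1)$ and of $G$, and that $1/\a_t$ remains in the range of $G$, on which the construction of $H$ and of $\ell_t$ rests.
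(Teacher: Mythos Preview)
Your proposal is correct and follows essentially the same route as the paper's proof: the characteristic identity $u_t=\alpha_tk_t(\phi_t(x,y),y)-k_0(x,y)\equiv0$, the Stieltjes chain rule yielding $\partial_x\phi_t(\ell_t,1)\,\mathrm{d}\ell_t=0$, the supremum argument (using that $\ell$ would be constant while $\beta$ strictly increases) to upgrade to all $t>\Tgel$, and the resulting autonomous ODE $\dot\alpha_t=k_0(H(1/\alpha_t),1)$ are all exactly the paper's steps, with the same appeals back to Lemmas~\ref{lemM}--\ref{lemlt2} and Section~\ref{exist}. Your closing ``Main obstacle'' paragraph accurately isolates the same two delicate points that the paper highlights in Remark~\ref{see}.
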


\subsection{Proof}

The only major difference with respect to the proof of Theorem \ref{thsmolu} is the
additional variable $y$ in the generating function $k_t(x,y)$. However, the variable
$y$ plays the role of a parameter in the PDE \eqref{PDE3}, and this allows to adapt
all above techniques.

\begin{proof}[Proof of Theorem \ref{thsmolubras}] The case $K\leq A_0<+\infty$, for which $\Tgel= \pinf$
has already been treated in \cite[Thm. 2]{BertoinTSS}, so
that we can restrict here to the cases where $\Tgel<+\infty$.
When $\Tgel > 0$, Thm. 2 in \cite{BertoinTSS} also shows that $\a_t = 1 + A_0 t$ on $[0,\Tgel)$ (this however also requires that $\la a^2 ,c_t \ra$ be bounded in a neighborhood of 0: see point 3 of the proof of Lemma \ref{ti}).

\begin{enumerate}
\item First, by setting $u_t(x,y):=\alpha_tk_t(\phi_t(x,y),y)-k_0(x,y)$, we can see,
arguing as in points (i)-(ii) of the proof of Lemma \ref{ti}, that for all
$y\in(0,1]$ and $t>0$ there exists $\ell_t^0(y) < \ell_t(y)\in(0,1]$ such that
\begin{equation}\label{ktphit}
\alpha_tk_t(\phi_t(x,y),y)=k_0(x,y), \qquad \forall \, t\geq 0, \, y\in(0,1], \, x\in[\ell_t^0(y),\ell_t(y)]
\end{equation}
and $\phi_t(\cdot,y):[\ell_t^0(y),\ell_t(y)]\mapsto[0,1]$ is a continuous bijection and
has a continuous right inverse $h_t(\cdot,y):[0,1]\mapsto[\ell_t^0(y),\ell_t(y)]$.
\begin{figure}[htb]
\begin{tabular}{cc}
\includegraphics[width=7.1cm]{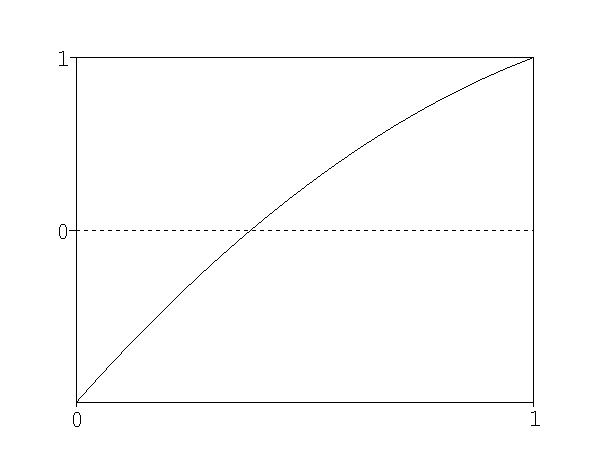}& \includegraphics[width=7.1cm]{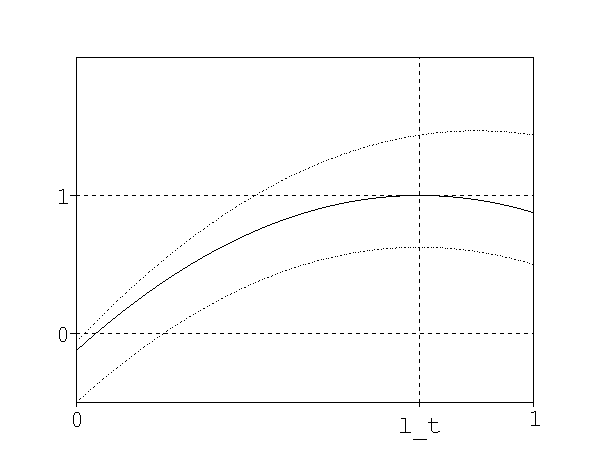}
\end{tabular}
\caption{$\phi_t(\cdot,1)$ before and after gelation. The dotted lines represent what $\phi_t$ may look like. The solid one is the actual $\phi_t$.}
\end{figure}
\item We denote for simplicity
\[
k_t(x):=k_t(x,1), \qquad \phi_t(x):=\phi_t(x,1), \qquad t\geq 0, \, x\in[0,1].
\]
For $y=1$, we set $\ell_t(1)=\ell_t$, i.e.
\[
1=\phi_t(\ell_t)=\alpha_t(\ell_t-\beta_tk_0(\ell_t)), \qquad t\geq 0.
\]
Arguing as in points (iv)-(v) of the proof of Lemma \ref{ti}, we can see that
$\ell_t=1$ for all $t\leq \Tgel$ and $\ell_t<1$ for all $t>\Tgel$. Moreover,
$t\mapsto \ell_t$ is continuous and monotone non-increasing.
Since $\phi_t$ is increasing on $[0,\ell_t]$, $\phi_t'(\ell_t)\geq 0$, i.e.
\[
\beta_t \leq \frac1{k_0'(\ell_t)},
\]
so that
\begin{equation}\label{4}
1 =\alpha_t(\ell_t-\beta_tk_0(\ell_t))\geq \alpha_t\, G(\ell_t),
\end{equation}
where we set $G(x):=x-\frac{k_0(x)}{k_0'(x)}$, $x\in[0,1)$. Notice that
\[
G'(x)= 1-\frac{(k_0'(x))^2-k_0(x)k_0''(x)}{(k_0'(x))^2}=\frac{k_0(x)k_0''(x)}{(k_0'(x))^2}>0,
\]
since $k_0$ is strictly convex (there is no gelation whenever $k_0'' \equiv 0$). Moreover $G(0)\leq 0$ and
\[
G(1)=1-\frac{A_0}K\quad {\rm if} \ A_0<+\infty, \qquad G(1)=1 \quad {\rm if} \ A_0=+\infty.
\]
Indeed, $k_0'(1)=K=\la c_0,a(a-1)\ra$ and, if $k_0(1)=A_0=+\infty$, then
\[
\lim_{x\uparrow 1} \frac{k_0(x)}{k_0'(x)} =0
\]
since, if $\liminf_{x\uparrow 1} \frac{k_0(x)}{k_0'(x)} > \varepsilon>0$, then
$k_0(1)\leq k_0(1-\delta)e^{\delta/\varepsilon}<+\infty$, for
some $\delta>0$, contradicting $k_0(1)=+\infty$.
In any case, $G$ has an inverse $H$, and $H(1/x)$ is defined for $x \in [1+A_0 \Tgel,+ \infty)$.
\item
Computing \eqref{ktphit} at $(x,y)=(\ell_t,1)$ we obtain
\begin{equation}\label{2}
k_0(\ell_t)=\alpha_tk_t(1)=\alpha_tA_t=\dplusdt{\alpha_t}.
\end{equation}
Let us notice that
\[
\phi_t(x) = x+\int_0^t \left(A_s \phi_s(x)-\frac{k_0(x)}{\alpha_s}\right) \ds.
\]
Then by \eqref{2}, analogously to \eqref{phitprimlt} above,
\[
0 = \mathrm{d} \, \phi_t(\ell_t) = \left(A_t\phi_t(\ell_t)-\frac{k_0(\ell_t)}{\alpha_t}\right)\mathrm{d}t
+ \phi'_t(\ell_t)\,\dl_t= \phi'_t(\ell_t)\,\dl_t.
\]
In particular, for $\dl_t$-a.e. $t$, $\phi'_t(\ell_t)=0$, i.e. $\beta_t = 1/k_0'(\ell_t)$,
and therefore
\[
1 =\alpha_t(\ell_t-\beta_tk_0(\ell_t))= \alpha_t\, G(\ell_t), \qquad \dl_t-{\rm a.e.} \, t.
\]
Then, by \eqref{4}, we can write (note that $H$ is well-defined on the considered interval)
\[
\ell_t \leq H\left(\frac1{\alpha_t}\right), \quad \forall \, t>\Tgel,
\qquad \ell_t = H\left(\frac1{\alpha_t}\right), \quad \dl_t-{\rm a.e.} \, t.
\]
Now,  by \eqref{2}, setting $\Lambda:\,]1+A_0\Tgel,+\infty[\,\mapsto\,]0,1[$, $\Lambda(z):=k_0\left(H\left(\frac1z\right)\right)$,
\[
\dplusdt{\alpha_t} \leq \Lambda(\alpha_t), \quad \forall \, t> \Tgel,
\qquad \dplusdt{\alpha_t} = \Lambda(\alpha_t), \quad \dl_t-{\rm a.e.} \, t.
\]
Since $\alpha_t>1+A_0 \Tgel$ for any $t>\Tgel$, we obtain that $k_0(\ell_t)\leq\Lambda(\alpha_t)<1$ for
all $t>\Tgel$. In particular, $\dl_t$ is not identically equal to $0$. Suppose that for some
$t>\Tgel$ we have $\phi_t'(\ell_t)>0$. We set
\[
s:=\sup\{r<t: \phi_r'(\ell_r)=0\}=\max\{r<t: \phi_r'(\ell_r)=0\}.
\]
Then for all $r\in\,]s,t[$ we must have $\phi_r'(\ell_r)>0$. Then for all
$r\in\,]s,t[$ we have $\ell_r=\ell_s$. But, by definition of $\beta$,
\[
\beta_r>\beta_s=\frac1{k_0'(\ell_s)}=\frac1{k_0'(\ell_r)}
\]
and this is a contradiction.
Therefore for all $t>\Tgel,$ we have $\dot{\alpha}_t = \Lambda(\alpha_t)$ for all
$t>\Tgel$ and the only solution of this equation with $\a_{\Tgel}=1+A_0 \Tgel$ is
given by \eqref{alphat2}.
\item In order to prove \eqref{k0lt}, let us note that by the preceding results
\[
\ddt{\a_t}=\a_t A_t = \a_t k_t(1,1) = k_0(\ell_t,1),
\]
\[
A_t = \ddt{}\log\a_t=\ddt{}\log\left(1+\int_0^tk_0(\ell_s,1) \ds\right)
=\frac{k_0(\ell_t,1)}{1+\int_0^tk_0(\ell_s,1) \ds}.
\]
\end{enumerate}
The rest of the proof follows the same line as that of Theorem \ref{thsmolu}.
\end{proof}

\section{The modified version}

Let us finally consider Flory's version of the model with arms. As in the case of Flory's equation
\eqref{flory}, we can consider only initial concentrations $c_0$ such that $A_0=\la c_0,a\ra<+\infty$.
Then, the equation we are interested in is
\begin{equation}
\begin{split}
\dt c_t(a,m) & = \frac12 \sum_{a'=1}^{a+1} \sum_{m'=1}^{m-1} a'(a+2-a') c_t(a',m')c_t(a+2-a',m-m') \\
 & \quad - \sum_{a' \geq 1} \sum_{m'\geq 1} aa'c_t(a,m) c_t(a',m') \\
 & \quad - \left ( \frac{A_0}{1+tA_0} - \sum_{a',m' \geq 1} a' c_t(a',m') \right ) a c_t(a,m).
\end{split}
\label{florybras}
\end{equation}
With the same techniques as above, we can prove the following result.
\begin{theorem}\label{thflorybras}
Consider initial concentrations $c_0(a,m)\geq 0$, $a\in\N$, $m\in\N^*$ such that $A_0:=\la c_0, a \ra \in(0,+\infty)$
and with $K:=\la c_0 , a(a-1) \ra \in [0,\pinf]$. Let $\Tgel$ be defined as in \eqref{Tgelbras}.
Then equation \eqref{florybras} has a unique solution defined on $\R^+$.
When $\Tgel < \pinf$, this solution enjoys the following properties.
\begin{enumerate}
\item We have
\begin{equation}
A_t=\frac{1}{1+tA_0} k_0(l_t)
\label{Atfl}
\end{equation}
where $l_t=1$ for $t\leq  \Tgel$ and, for $t> \Tgel$, $l_t$ is uniquely defined by
\[
l_t = \frac{t}{1+tA_0} k_0(l_t), \qquad l_t\in[0,1).
\]
Therefore $t\mapsto A_t$ is continuous and strictly decreasing on $[0,+\infty)$ and analytic on $\R^+ \bsl \{ \Tgel \}$.
\item The function $\phi_t(x,y)=(1+tA_0)x - t k_0(x,y)$ has, for every $y \in [0,1]$, a right inverse $h_t(\cdot,y) \, : \, [0,1] \to [0,l_t]$. The generating function $k_t$ defined in \eqref{kt} is given for $t \geq 0$ by
\begin{equation}
k_t(x,y) = \frac{1}{1+t A_0} k_0(h_t(x,y),y).
\label{ktfl}
\end{equation}
\item The second moment $\la a^2,c_t \ra$ is finite on $\R^+ \bsl \{ \Tgel \}$ and infinite at $\Tgel$.
\end{enumerate}
\end{theorem}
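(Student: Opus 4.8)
The plan is to run the Flory-type argument of Theorem~\ref{thflory} in the arms variables, exactly as the proof of Theorem~\ref{thsmolubras} runs that of Theorem~\ref{thsmolu}, with $y$ again a frozen parameter. The case $\Tgel=\pinf$ (that is, $K\le A_0<\pinf$) is degenerate and already covered: there is no gelation, $A_t\equiv\frac{A_0}{1+tA_0}$, the correction term in \eqref{florybras} vanishes identically, so \eqref{florybras} reduces to \eqref{smolubras} and one invokes \cite[Thm.~2]{BertoinTSS}. So assume $\Tgel<\pinf$.

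First I would pass to the generating function $k_t$ of \eqref{kt}. Forming $\sum_{a,m}a\,(\cdot)\,x^{a-1}y^m$ in \eqref{florybras}, the coagulation sums contribute, as in the derivation of \eqref{PDE3}, the terms $k_t\,\d{k_t}{x}$ and $-A_t\bigl(k_t+x\,\d{k_t}{x}\bigr)$, while the gel term contributes $-\bigl(\tfrac{A_0}{1+tA_0}-A_t\bigr)\bigl(k_t+x\,\d{k_t}{x}\bigr)$; adding the last two, the unknown $A_t$ cancels, and $(c_t)$ solves \eqref{florybras} if and only if
\[
\d{k_t}{t}(x,y)=\Bigl(k_t(x,y)-\frac{A_0\,x}{1+tA_0}\Bigr)\d{k_t}{x}(x,y)-\frac{A_0}{1+tA_0}\,k_t(x,y),\qquad A_t=k_t(1,1).
\]
This is the essential point: the transport coefficient is now an \emph{explicit} function of $t$, so the PDE is local — the situation of \eqref{PDE2} in Theorem~\ref{thflory} — and none of the delicate nonlocal machinery of the Smoluchowski arms case (the analogues of Lemmas~\ref{ti} and~\ref{lemlt2}) is needed.

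Next I would integrate by characteristics with $y$ frozen. The integrating factor is $\a_t:=\exp\bigl(\int_0^t\frac{A_0}{1+sA_0}\ds\bigr)=1+tA_0$ and $\b_t:=\int_0^t\a_s^{-2}\ds=\frac{t}{1+tA_0}$, giving the characteristic curve $\phi_t(x,y)=\a_t\bigl(x-\b_t k_0(x,y)\bigr)=(1+tA_0)x-tk_0(x,y)$, as in the statement. One then checks the analogues of properties (d1)--(d6) of Theorem~\ref{thflory}: $\phi_t(0,y)=0$; $\phi_t(1,1)=1$ and $\phi_t(1,y)\ge1$ for $y<1$; $x\mapsto\phi_t(x,y)$ is strictly concave on $(0,1)$ — here $k_0(\cdot,y)$ is strictly convex since $\Tgel<\pinf$ forces $c_0(a,\cdot)\neq0$ for some $a\ge3$ — with $\d{\phi_t}{x}(0,y)\ge1$; for $t\le\Tgel$ it increases on $[0,1]$, so we may set $l_t:=1$; for $t>\Tgel$ it has a unique interior maximum $m_t(y)$ and $l_t(y):=\inf\{x:\phi_t(x,y)=1\}$ satisfies $0<l_t(y)\le m_t(y)$, with $\d{\phi_t}{x}(l_t,1)\neq0$ at $y=1$. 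Thus $\phi_t(\cdot,y):[0,l_t(y)]\to[0,1]$ is an increasing bijection with continuous right inverse $h_t(\cdot,y):[0,1]\to[0,l_t(y)]$. Putting $u_t(x,y):=(1+tA_0)\,k_t(\phi_t(x,y),y)-k_0(x,y)$ and running the Gronwall-plus-analytic-continuation argument of points~(i)--(ii) of the proof of Lemma~\ref{ti} — differentiate in $t$, use the PDE and $\ddt{\a_t}=A_0$, then extend from a small $x$-interval by analyticity of both sides in $x$ with $y$ fixed — gives $u_t\equiv0$, i.e. $k_t(x,y)=\frac1{1+tA_0}k_0(h_t(x,y),y)$, which is \eqref{ktfl}; since this determines $k_t$, hence $(c_t)$, from $c_0$ alone, uniqueness follows.

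The remaining properties are then read off. Evaluating \eqref{ktfl} at $(1,1)$ gives \eqref{Atfl} with $l_t=h_t(1,1)$ the unique point where $\phi_t(\cdot,1)=1$: $l_t=1$ for $t\le\Tgel$, and for $t>\Tgel$, $l_t\in(0,1)$ solves $(1+tA_0)l_t-tk_0(l_t,1)=1$. Monotonicity and continuity of $t\mapsto l_t$, hence of $A_t$, follow as in Lemma~\ref{ti}(3) and part~(3) of the proof of Theorem~\ref{thflory}, and analyticity on $\R^+\backslash\{\Tgel\}$ from the implicit function theorem applied to $\phi_t(l_t,1)=1$, whose $x$-derivative does not vanish there. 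For the second moment, $\la c_t,a^2\ra=A_t+\d{k_t}{x}(1,1)$ with $\d{k_t}{x}(1,1)=k_0'(l_t,1)/\bigl((1+tA_0)\,\d{\phi_t}{x}(l_t,1)\bigr)$, which is finite precisely when $\d{\phi_t}{x}(l_t,1)\neq0$ (i.e.\ $t\neq\Tgel$) and equals $+\pinf$ at $t=\Tgel$, where $l_t=1=m_t$. The requirement $\int_0^t\la c_s,a\ra^2\ds<\pinf$ from Definition~\ref{def3.1} is automatic since $A_s\le\frac{A_0}{1+sA_0}\le A_0<\pinf$, and existence of a genuine nonnegative solution with generating function $k_t$ follows as in Section~\ref{exist} by complete-monotonicity/Bernstein arguments in the $x$-variable for each fixed $y$. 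I expect no serious obstacle here: the only point needing care — the concavity bookkeeping isolating $l_t(y)$ and making $h_t(\cdot,y)$ well-defined for every $y$ and $t$ — has already been carried out in part~1 of the proof of Theorem~\ref{thsmolubras}, and the cancellation that makes the PDE local is precisely what removes the genuine difficulty of that theorem.
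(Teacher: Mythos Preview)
Your approach is essentially the paper's: derive the local PDE by observing that the unknown $A_t$ cancels against the gel correction, set $\phi_t(x,y)=(1+tA_0)x-tk_0(x,y)$, verify its concavity profile, and read off $k_t$ via the right inverse $h_t$. One small bookkeeping slip: you write $\phi_t(0,y)=0$, but in fact $\phi_t(0,y)=-t\,k_0(0,y)=-t\sum_m c_0(1,m)y^m\le 0$, with strict inequality whenever some $c_0(1,m)>0$ and $y>0$. The paper accounts for this by introducing $l_t^0(y)\in[0,1)$, the point where $\phi_t(\cdot,y)$ first reaches $0$, so that the bijection is $\phi_t(\cdot,y):[l_t^0(y),l_t(y)]\to[0,1]$ and $h_t(\cdot,y)$ lands in $[l_t^0(y),l_t(y)]$ rather than $[0,l_t(y)]$; with that adjustment your sketch matches the paper's proof.
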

\begin{proof}
The proof follows the same line of reasoning as the one of Theorem \ref{thflory}. First, for every $y \in [0,1]$, $\phi_t(\cdot,y)$, as defined in the statement, has the following properties:
\begin{itemize}
\item[(i)] $\phi_t(0,y) \leq 0$, $\phi_t(1,y) \geq \phi_t(1,1) = 1$;
\item[(ii)] For $t \leq \Tgel$, $\phi_t(\cdot,y)$ is increasing , and in particular, there are unique $0 \leq l_t^0(y)<l_t(y) \leq 1$ such that $\phi_t(l_t^0(y),y)=0$ and $\phi_t(l_t(y),y) =1$;
\item[(iii)] For $t > \Tgel$, $\phi_t(\cdot,y)$ is increasing then decreasing for, and in particular, there are unique $0 \leq l_t^0(y) < l_t(y) < 1$ such that $\phi_t(l_t^0(y),y)=0$ and $\phi_t(l_t(y),y) =1$.
\end{itemize}
\begin{figure}[htb]
\begin{tabular}{cc}
\includegraphics[width=7.1cm]{phitpregelbras.png}& \includegraphics[width=7.1cm]{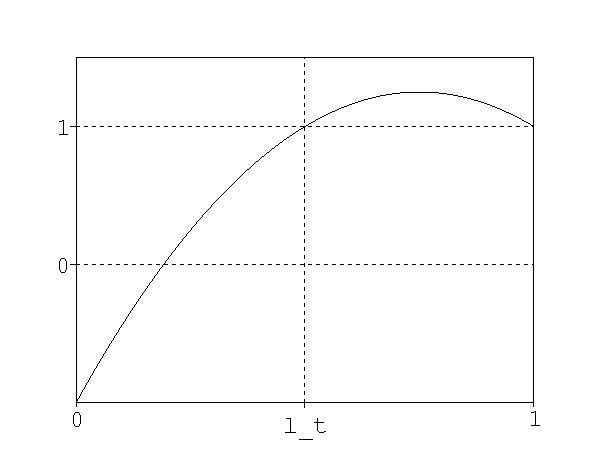}
\end{tabular}
\caption{$\phi_t(\cdot,1)$ before and after gelation.}
\end{figure}
In any case, it is easy to check that for $x \in [l_t^0(y),l_t(y)]$,
\[
\exp \left ( \int_0^t A_s \ds \right ) k_t(\phi_t(x,y),y)=k_0(x,y)
\]
where $A_t$ is defined by \eqref{Atfl}. Then, the properties above show that $\phi_t(\cdot,y)$ has a right inverse $h_t$ defined on $[0,1]$, and compounding by $h_t$ in the previous equation shows that \eqref{ktfl} holds. The other properties then follow easily.
\end{proof}

\section{Limiting concentrations}

We compute here some explicit formulas for the concentrations and their limit for the two models above. In the standard Smoluchowski and Flory cases, particles keep coagulating, and they all eventually disappear into the gel: $c_t(m) \to 0$ for every $m \geq 1$. When the aggregations are limited, there may remain some particles in the system, since whenever a particle with no arms is created, it becomes inert, and so it will remain in the medium forever. In the following, we consider monodisperse initial conditions, i.e. $c_0(a,m) = \mu(a) \un{\{m=1\}}$ for a measure $\mu$ on $\N$. We also denote
\[
\nu(m) = (m+1) \mu(m+1).
\]
In \cite{BertoinTSS}, it is assumed that $\nu$ is a probability measure, what we do not require. The results of \cite{BertoinTSS} can hence be recovered by taking $A_0=1$ below. Now, note the two following facts.
\begin{itemize}
	\item Equations \eqref{to0} and \eqref{Atfl} readily show that
	\begin{equation} \label{cinf}
	\cinf(a,m) := \lim_{t \to \pinf} c_t(a,m) = 0, \quad a \geq 1,
	\end{equation}
	that is, only particles with no arms remain in the medium (else, a coagulation ``should'' occur).
	\item There is an arbitrary concentration of particles with no arms at time 0, and they are the only particles with no arms and mass $1$ which will still be in the medium in the final state. Hence, the limit concentrations $\cinf(0,1)=c_0(0,1)$ have no physical meaning. We will thus only consider $\cinf(0,m)$ for $m \geq 2$.
\end{itemize}
Note now that if at time 0, each particle has zero or more than two arms, then obviously, this property still holds for any positive time. Rigorously, this is easy to check with the representation formula \eqref{ktxy} or \eqref{ktfl}. Then, because of \eqref{cinf},
\[
\cinf(m)=0
\]
for each $m \geq 2$. We thus rule out this trivial case by assuming that
\begin{equation} \label{hypnu}
\nu(0) > 0.
\end{equation}
This is actually a technical assumption which is needed to apply Lagrange's inversion formula in the proof of the following corollaries. We will relate our results to a population model known as the Galton-Watson process. For some basics on this topic, see e.g. the classic book \cite{AtNey}. The formula providing the total progeny of these processes was first obtained by Dwass in \cite{Dwass}.

\subsection{Modified model} \label{seccinffl}

\begin{cor} \label{corcinffl}
Let $c_t(a,m)$ be the solution to Flory's equation with arms \eqref{florybras} and with initial conditions $c_0(a,m) = \mu(a) \un{\{m=1\}}$ with $\mu(1) > 0$.
\begin{itemize}
	\item For all $t \geq 0$, $m \geq 2$, $a \geq 0$,
	\[
	c_t(a,m) = \frac{(a+m-2)!}{a!m!} \frac{t^{m-1}}{(1+tA_0)^{a+m-1}} \nu^{*m}(a+m-2).
	\]
	\item In particular, there are limiting concentrations $\cinf(a,m) = \cinf(m) \un{\{a=0\}}$ with
	\begin{equation} \label{cinffl}
	\cinf(m)=\frac{1}{m(m-1)} \nu^{*m}(m-2).
	\end{equation}
\end{itemize}
\end{cor}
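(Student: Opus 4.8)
The plan is to specialise the representation formula \eqref{ktfl} of Theorem \ref{thflorybras} to the monodisperse case and then extract the individual concentrations $c_t(a,m)$ by Lagrange inversion.

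First I would compute $k_0$. Plugging $c_0(a,m)=\mu(a)\un{\{m=1\}}$ into \eqref{kt} gives $k_0(x,y)=y\,\widehat\nu(x)$, where $\widehat\nu(x):=\sum_{j\geq0}\nu(j)x^{j}$, using only the relabelling $\nu(a-1)=a\mu(a)$ contained in $\nu(m)=(m+1)\mu(m+1)$; this also shows $A_0=k_0(1,1)=\widehat\nu(1)$. Hence $\phi_t(x,y)=(1+tA_0)x-ty\,\widehat\nu(x)$, so $w=h_t(x,y)$ is the solution of
\[
w=\frac{x}{1+tA_0}+\frac{ty}{1+tA_0}\,\widehat\nu(w),
\]
which is exactly the form $w=a+u\,\psi(w)$ governed by the Lagrange reversion theorem, with base point $a=x/(1+tA_0)$, reversion parameter $u=ty/(1+tA_0)$ and $\psi=\widehat\nu$; this is where the hypothesis \eqref{hypnu} $\nu(0)>0$ is used, to keep the inversion non-degenerate. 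Moreover, since $k_t=\frac1{1+tA_0}k_0(h_t,y)=\frac{y}{1+tA_0}\widehat\nu(w)$ while the functional equation gives $\widehat\nu(w)=\big((1+tA_0)w-x\big)/(ty)$, one gets the handy identity $k_t(x,y)=\frac1t\big(h_t(x,y)-\tfrac{x}{1+tA_0}\big)$, so that $[y^m]k_t=\tfrac1t[y^m]h_t$ for all $m\geq1$.

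Then I would run the inversion. Lagrange reversion gives $h_t=a+\sum_{n\geq1}\frac{u^{n}}{n!}\,\partial_a^{\,n-1}\widehat\nu(a)^{n}$; writing $\widehat\nu(a)^{n}=\sum_{j\geq0}\nu^{*n}(j)a^{j}$, substituting $a=x/(1+tA_0)$, $u=ty/(1+tA_0)$ and reading off the coefficient of $x^{a-1}y^{m}$ (which forces $j=a+m-2$) produces $a\,c_t(a,m)$ for every $a\geq1$, hence the stated formula after dividing by $a$ and collecting the $\Gamma$-factors. For $a=0$ the generating function $k_t$ carries no information, so I would instead use the $a=0$ component of \eqref{florybras}, $\dt c_t(0,m)=\frac12\sum_{m'=1}^{m-1}c_t(1,m')c_t(1,m-m')$ for $m\geq2$ (and $c_0(0,m)=0$), insert the values $c_s(1,k)=\tfrac1k\tfrac{s^{k-1}}{(1+sA_0)^{k}}\nu^{*k}(k-1)$ already found, and evaluate the convolution by the two-ancestor Dwass identity $\sum_{m'=1}^{m-1}\frac{\nu^{*m'}(m'-1)\,\nu^{*(m-m')}(m-m'-1)}{m'(m-m')}=\frac2m\nu^{*m}(m-2)$; the integrand becomes $\frac{\nu^{*m}(m-2)}{m}\frac{s^{m-2}}{(1+sA_0)^{m}}$, with primitive $\frac{\nu^{*m}(m-2)}{m(m-1)}\frac{s^{m-1}}{(1+sA_0)^{m-1}}$ vanishing at $s=0$, which yields the $a=0$ case. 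Finally, the limiting concentrations come by letting $t\to\pinf$ termwise: the factor $t^{m-1}/(1+tA_0)^{a+m-1}$ tends to $0$ for $a\geq1$, reproving \eqref{cinf}, and to $A_0^{-(m-1)}$ for $a=0$, which together with $(m-2)!/m!=1/(m(m-1))$ gives \eqref{cinffl} (in the normalisation $A_0=1$; for general $A_0$ one picks up the extra factor $A_0^{-(m-1)}$).

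The only genuine difficulty is bookkeeping: identifying which of $x,y$ plays the reversion role (it is $y$, through $u=ty/(1+tA_0)$, with $x$ entering through the base point $a=x/(1+tA_0)$), keeping the binomial combinatorics straight, and spotting that the $a=0$ equation needs the Dwass convolution identity — which is just the statement that the progeny of a Galton--Watson process started from two ancestors is the independent sum of two one-ancestor progenies. No analytic issue arises, since for $x,y\in[0,1)$ and $A_0<\pinf$ all series converge and the $t\to\pinf$ limit is term-by-term.
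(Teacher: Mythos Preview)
Your proposal is correct and follows essentially the same route as the paper: specialise the representation formula \eqref{ktfl} of Theorem~\ref{thflorybras} to monodisperse data, obtain the functional equation $(1+tA_0)h_t - ty\,\widehat\nu(h_t)=x$, and extract coefficients by Lagrange inversion. The paper simply observes that this functional equation coincides with the one solved in \cite[Section~3.2]{BertoinTSS} up to the substitution $1+t\mapsto 1+tA_0$, and quotes the result; you carry out the inversion explicitly, which is what \cite{BertoinTSS} does anyway.

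The one place where you genuinely add something is the $a=0$ case. Since $k_t$ only encodes $a\,c_t(a,m)$, some extra work is needed; your route---integrate the $a=0$ component of \eqref{florybras} using the already-computed $c_t(1,\cdot)$ and close the convolution via the two-ancestor Dwass identity---is clean and self-contained, and arguably more transparent than deferring to \cite{BertoinTSS}. You are also right to flag the normalisation: from the explicit $c_t(0,m)$ one gets $\cinf(m)=\frac{1}{m(m-1)}A_0^{-(m-1)}\nu^{*m}(m-2)$, so \eqref{cinffl} as written is the case $A_0=1$, consistent with the paper's remark that one may always assume this up to a time-change.
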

\begin{proof}
With the notation of Theorem \ref{thflorybras}, we have
\[
(1+t A_0)h_t(x,y) - t y k_0(h_t(x,y)) = x, \quad k_t(x,y) = \frac{1}{1+t A_0} y k_0(h_t(x,y)).
\]
Up to some obvious changes (just replace $1+t$ by $1+tA_0$), these are precisely the equations solved in Section 3.2 of \cite{BertoinTSS} under the assumption \eqref{hypnu}. Theorem 2 and Corollary 2 therein hence give the desired result (with only $1+t$ replaced by $1+t A_0$).
\end{proof}

If $A_0=1$, which we may always assume up to a time-change, we observe as in \cite{BertoinTSS} that $2 (m-1) \cinf(0,m)$ is the probability for a Galton-Watson process with reproduction law $\nu$, started from two ancestors, to have total progeny $m$. This Galton-Watson process is (sub)critical when $K:=\sum_{a\geq 1}a(a-1)\mu(a) \leq 1$, that is, by Theorem \ref{thflorybras}, when there is no gelation, and supercritical when $K > 1$. Denote by $p_{\nu}$ its extinction probability, i.e. the smallest root of $k_0(x)=x$, so $p_{\nu} = 1$ when $K \leq 1$ and $p_{\nu} < 1$ when $K > 1$. Let us compute the mass at infinity, as in \cite{BertoinTSS}, by writing
\begin{align*}
M_{\infty} := \sum_{m \geq 1} m \cinf(m) & = \cinf(1) + \sum_{m \geq 2} \frac{1}{m-1} \nu^{*m}(m-2) \\
& = \cinf(1) + \sum_{a \geq 0} \nu(a) \sum_{m \geq a + 2} \frac{1}{m-1} \nu^{*m-1}(m-2-a) \\
& = \cinf(1) + \sum_{a \geq 0} \nu(a) \sum_{n \geq a + 1} \frac{1}{n} \nu^{*n}(n-1-a).
\end{align*}
Now, the Lagrange inversion formula \cite{Genefunc} shows that
\[
\frac{a+1}{n}\nu^{*n}(n-1-a)
\]
is precisely the coefficient of $x^n$ in the analytic expansion of $\phi(x)$ around $0$, where $\phi$ is the unique solution to $\phi(x) = x k(\phi(x))$. Hence 
\[
\sum_{n \geq a + 1} \frac{1}{n} \nu^{*n}(n-1-a) = p_{\nu},
\]
where $p_{\nu}$ is defined above. Note also that $\cinf(1) = \mu(0)$, so finally
\begin{equation}\label{Minfty}
M_{\infty} = \cinf(1) + \sum_{a \geq 0} \nu(a) \, \frac{1}{a+1} \,  p_{\nu}^{a+1}  = \sum_{a \geq 0} \mu(a) \,  p_{\nu}^a.
\end{equation}
The mass at time 0 is $M_0 = \sum \mu(a)$, so when there is no gelation, $p_{\nu}=1$ and no mass is lost in the gel. When there is gelation, $p_{\nu} < 1$ and the mass $M_0 - M_{\infty} > 0$ is lost in the gel. By Dwass' formula \cite{Dwass}, $M_{\infty}$ is also the probability that a Galton-Watson process, with reproduction law $\mu$ for the ancestor and $\nu$ for the others, has a finite progeny.

\subsection{Non-modified model} \label{seccinfsm}

\begin{cor}
Let $c_t(a,m)$ be the solution to Smoluchowski's equation with arms \eqref{smolubras} and with initial conditions $c_0(a,m) = \mu(a) \un{\{m=1\}}$ with $\mu(1) > 0$.
\begin{itemize}
	\item For all $t \geq 0$, $m \geq 2$, $a \geq 0$,
	\[
	c_t(a,m) = \frac{(a+m-2)!}{a!m!} \frac{\beta_t^{m-1}}{\alpha_t^a} \nu^{*m}(a+m-2)
	\]
	where $\a_t$ and $\b_t$ are defined in Theorem \ref{thsmolubras}.
	\item In particular, there are limiting concentrations $\cinf(a,m) = \cinf(m) \un{\{a=0\}}$ with
	\begin{equation}\label{cinfflb}
	\cinf(m)=\frac{1}{m(m-1)} \binf^{m-1} \nu^{*m}(m-2)
	\end{equation}
	where $\binf$ is defined by
	\[
	\binf = \frac{1}{k_0'(c)} = \frac{c}{k_0(c)}
	\]
	and $c$ is the unique solution to $k_0'(c) = k_0(c)/c$. Moreover, $\binf = 1$ when there is no gelation, and $\binf > 1$ otherwise.
\end{itemize}
\end{cor}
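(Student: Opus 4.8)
The plan is to imitate the proof of Corollary~\ref{corcinffl}: specialize the representation \eqref{ktxy} of Theorem~\ref{thsmolubras} to monodisperse data, recognize the resulting functional equation as one that Lagrange's inversion formula solves, extract the Taylor coefficients, and let $t\to\pinf$. For $c_0(a,m)=\mu(a)\un{\{m=1\}}$ one has $k_0(x,y)=y\,k_0(x)$, where $k_0(x):=k_0(x,1)=\sum_{n\ge 0}\nu(n)x^n$ is the generating function of $\nu$, and $\nu(0)=\mu(1)>0$, so \eqref{hypnu} holds. Writing $h=h_t(x,y)$ for the right inverse of $\phi_t(\cdot,y)$, the relation $\phi_t(h,y)=x$ reads $\a_t\bigl(h-\b_t y\,k_0(h)\bigr)=x$, while \eqref{ktxy} gives $\a_t k_t(x,y)=y\,k_0(h)$. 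Setting $\psi:=\a_t k_t(x,y)$ yields $h=x/\a_t+\b_t\psi$, hence
\[
\psi \;=\; y\,k_0\!\Bigl(\tfrac{x}{\a_t}+\b_t\psi\Bigr).
\]
Up to replacing $1+tA_0$ by $\a_t$ and $t/(1+tA_0)$ by $\b_t$ (these coincide with $\a_t,\b_t$ before gelation, as they must), this is exactly the functional equation of Section~3.2 of \cite{BertoinTSS} and of Corollary~\ref{corcinffl}.

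Since $\nu(0)>0$, the equation determines $\psi$ as the unique analytic function of $(x,y)$ near $0$ vanishing at $y=0$, namely $\psi=\a_t k_t$. Lagrange's inversion formula in $y$, applied exactly as in \cite{BertoinTSS}, gives $[x^{a-1}y^m]\psi=\frac{\b_t^{m-1}}{m}\binom{a+m-2}{m-1}\a_t^{-(a-1)}\nu^{*m}(a+m-2)$ for $m\ge 2$; since $[x^{a-1}y^m]\psi=\a_t\,a\,c_t(a,m)$ and $\binom{a+m-2}{m-1}/(am)=(a+m-2)!/(a!\,m!)$, this is the asserted formula for $a\ge 1$. The case $a=0$ is not seen by $k_t$, so there I would use the evolution equation \eqref{smolubras}, which for $a=0$, $m\ge 2$ gives $\sum_{m\ge 2}c_t(0,m)y^m=\tfrac12\int_0^t k_s(0,y)^2\ds$, together with a second use of Lagrange's formula (on $h_s(0,\cdot)$, which solves $h=\b_s y\,k_0(h)$) and the identity $\dot{\b}_s=\a_s^{-2}$; this is Corollary~2 of \cite{BertoinTSS}, to which one may simply refer. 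The hypothesis $\mu(1)>0$ enters only to legitimize these inversions.

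It remains to let $t\to\pinf$. By \eqref{k0lt}, $\dot{\a}_t=k_0(\ell_t)\ge k_0(0)=\nu(0)>0$ since $k_0$ has non-negative coefficients, so $\a_t\ge 1+\nu(0)t\to\pinf$; hence $\b_t^{m-1}\a_t^{-a}\to 0$ for $a\ge 1$, which gives $\cinf(a,m)=0$ there (this is \eqref{cinf}), so $\cinf(a,m)=\cinf(m)\un{\{a=0\}}$, and at $a=0$ the formula above yields $\cinf(m)=\tfrac{(m-2)!}{m!}\,\binf^{m-1}\nu^{*m}(m-2)$ with $\binf:=\lim_{t\to\pinf}\b_t$. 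Because $\a_t\to\pinf$ and $H=G^{-1}$ is continuous, $\ell_t=H(1/\a_t)\to H(0)=c$, the unique zero of the strictly increasing function $G$ of \eqref{G} (in the gelation regime $G(0^+)<0<G(1^-)$, so $c\in(0,1)$), equivalently $k_0'(c)=k_0(c)/c$. By \eqref{phitprimlt2}, $\b_t=1/k_0'(\ell_t)$ for $t>\Tgel$, whence $\binf=1/k_0'(c)=c/k_0(c)$, which is \eqref{cinfflb}.

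For the size of $\binf$: without gelation $\ell_t\equiv 1$, $\a_s=1+A_0s$, so $\b_t=\int_0^t(1+A_0s)^{-2}\ds\to 1/A_0$, equal to $1$ in the normalization $A_0=1$ used after Corollary~\ref{corcinffl}; with gelation, $(k_0(x)/x)'$ has the sign of $G(x)$, so $x\mapsto k_0(x)/x$ is minimal at $c$, and $c<1$ forces $k_0(c)/c<k_0(1)/1=A_0$, hence $\binf=c/k_0(c)>1/A_0$, i.e. $\binf>1$ when $A_0=1$. I do not expect a genuine obstacle, as everything reduces to Theorem~\ref{thsmolubras} and the two Lagrange computations of \cite{BertoinTSS}; the points needing a little care are the case $a=0$ (not visible to $k_t$) and the identification $\binf=c/k_0(c)$, which rests on $\ell_t\to c$ (a consequence of $\a_t\to\pinf$ and \eqref{phitprimlt22}) together with $\b_t=1/k_0'(\ell_t)$ from \eqref{phitprimlt2}.
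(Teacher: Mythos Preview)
Your proposal is correct and follows the paper's own approach: reduce to the Lagrange-inversion computation of \cite[Section 3.2]{BertoinTSS} via the representation \eqref{ktxy}, with $1+tA_0$ replaced by $\a_t$ and $t/(1+tA_0)$ by $\b_t$, then identify $\binf$ from $\ell_t\to H(0)=c$ and $\b_t=1/k_0'(\ell_t)$. The only cosmetic differences are that the paper cites \eqref{alphat2} for $\a_t\to\pinf$ (you use $\dot\a_t\ge\nu(0)>0$, which is fine) and deduces $\binf>1$ from $\a_t<1+t$ under $A_0=1$, whereas you argue via the minimum of $k_0(x)/x$ at $c$; both are valid and equally short.
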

\begin{proof}
As for Corollary \ref{corcinffl}, the proof of the formula for $c_t(a,m)$ is the same as in \cite[Section 3.2]{BertoinTSS}, just replacing $1+t A_0$ by $\a_t$ and $t$ by $\a_t \b_t$. So we just have to find the limit of $\b_t$. First \eqref{alphat2} shows that $\a_t \to \pinf$,
hence, by  \eqref{phitprimlt22}, $\ell_t \to \ell_{\infty} = H(0)$. Now, \eqref{phitprimlt2} gives $\b_t = 1/k_0'(\ell_t)$, so $\b_t$ tends to
\[
\b_{\infty} = \frac{1}{k_0'(H(0))}
\]
where by definition $c := H(0)$ is the unique solution to $k_0'(c)=k_0(c)/c$. Finally, when there is gelation, $\a_t < 1+t$ after gelation because of \eqref{alphat2}, so by \eqref{Atbt}, $\binf > 1$.
\end{proof}

By a similar computation as above, we may also compute the mass at infinity in this case and get
\[
M_{\infty} = \sum_{a \geq 0} \mu(a) \,  c^a
\]
where $c$ is defined in the corollary. Note that $c$ is the slope of the straight line passing by $0$ and tangent to the graph of $k$, so $c > p_{\nu}$. In particular, less mass is lost than in Flory's case.

A final remark is that despite the striking resemblance between Formulas \eqref{cinfflb} and \eqref{cinffl}, the meaning of the factor $\binf$ is unclear. A probabilistic interpretation using the configuration model may explain its appearance.

\medskip

\noindent \textbf{Acknowledgements} We thank Jean Bertoin for useful discussions and advice.

\end{document}